\newtheorem{theorem}{Theorem}[section]
\newtheorem{lemma}[theorem]{Lemma}
\newtheorem{corollary}[theorem]{Corollary}
\newtheorem{proposition}[theorem]{Proposition}
\newcommand{\opt}{\mathrm{OPT}}
\newcommand{\bv}{\bar{V}}
\newcommand{\tk}{\texorpdfstring{$k$}{k}}
\newcommand{\tkm}{\texorpdfstring{$\underline{k}$}{k-}}
\newcommand{\argmax}{\mathrm{argmax}}
\newcommand{\email}[1]{\href{mailto:#1}{\nolinkurl{#1}}}
\newcounter{algprocedure}
\newcounter{savedalgocf}
\newenvironment{algprocedure}[1][htbp]{%
  \setcounter{savedalgocf}{\value{algocf}}%
  \setcounter{algocf}{\value{algprocedure}}%
  \stepcounter{algocf}%
  \SetAlgoRefName{\relax\arabic{algocf}}%
  \begin{algorithm}[#1]
}{%
  \end{algorithm}
  \setcounter{algprocedure}{\value{algocf}}%
  \setcounter{algocf}{\value{savedalgocf}}%
}
\begin{document}
\title{\textbf{Supermodular Maximization with\\ Cardinality Constraints}\thanks{Research supported in part by the National Natural Science Foundation of China (NSFC)  under grant numbers 11971046, 12331014 and 71871009; Science and Technology Commission of Shanghai Municipality (No. 22DZ2229014)}}
\author{Xujin Chen\ \ \ \  Xiaodong Hu\ \ \ \  Changjun Wang\\
\footnotesize Academy of Mathematics and Systems Science, Chinese Academy of Sciences, Beijing 100190, China\\
{\footnotesize School of Mathematical Sciences. University of Chinese Academy of Sciences, Beijing 100049, China}\\
\texttt{\small\{xchen,xdhu, wcj\}@amss.ac.cn} 
\and
Qingjie Ye\\
{\footnotesize School of Mathematical Sciences, Key Laboratory of MEA (Ministry of Education),}\\
{\footnotesize Shanghai Key Laboratory of PMMP, East China Normal University, Shanghai, 200241, China}\\
\texttt{\small qjye@math.ecnu.edu.cn}}

\date{}
\maketitle

\begin{abstract}
    Let $V$ be a finite set of  $n$ elements, $f: 2^V \rightarrow \mathbb{R}_+$ be a nonnegative monotone supermodular function, and $k$ be a positive integer no greater than $n$. This paper addresses the problem of maximizing $f(S)$ over all subsets $S \subseteq V$ subject to the cardinality constraint $|S| = k$ or $|S|\le k$. 
    
   Let $r$ be a constant integer. The function $f$ is assumed to be {\em $r$-decomposable}, meaning there exist $m\,(\ge1)$ subsets $V_1, \dots, V_m$ of $V$, each with a cardinality at most $r$, and a corresponding set of nonnegative supermodular functions  $f_i : 2^{V_i} \rightarrow \mathbb{R}_+$, $i=1,\ldots,m$ such that $f(S) =\sum_{i=1}^m f_i(S \cap V_i)$ holds for each $S \subseteq V$. Given $r$ as an input, we present a polynomial-time $O(n^{(r-1)/2})$-approximation algorithm for this maximization problem, which does not require prior knowledge of the specific decomposition.
    
    When the decomposition $(V_i,f_i)_{i=1}^m$ is known, an additional connectivity requirement is introduced to the problem. Let $G$ be the graph with vertex set $V$ and edge set  $\cup_{i=1}^m \{uv:u,v\in V_i,u\neq v\}$. The cardinality constrained solution set $S$ is required to induce a connected subgraph in $G$. This model generalizes the well-known problem of finding the densest connected $k$-subgraph. We propose a polynomial time $O(n^{(r-1)/2})$-approximation algorithm for this generalization. Notably, this algorithm gives an $O(n^{1/2})$-approximation for the densest connected $k$-subgraph problem, improving upon the previous best-known approximation ratio of $O(n^{2/3})$.
\end{abstract}

{\small {\bf Keywords:} Supermodular function, Cardinality constraint, Approximation algorithm, Connectivity}

\section{Introduction}\label{sec:intr}
Given a function  $f: 2^V \rightarrow \mathbb{R}$ over subsets of a finite ground set $V$, for any $v\in V$ and $S\subseteq V$, 
let $f(v|S) = f(S\cup \{v\})-f(S)$ denote the \emph{marginal value} of $v$ with respect to $S$. Function $f$ is {\em supermodular} if $f(A)+f(B) \le f(A\cup B)+f(A\cap B)$ for all $A,B \subseteq V$, or equivalently $f(v|B)\le f(v|A)$ for all $B\subseteq A$ and $v\in V \setminus A$. This paper adopts the value-oracle model: given any $S\subseteq V$, a black-box oracle returns the value $f(S)$.

Supermodular functions play a fundamental role in both theoretical studies and practical applications. Their theoretical significance is evident in several well-established results, such as the fact that a set function is supermodular if and only if its Lov\'{a}sz extension is concave \citep{lovasz1983submodular}, and that any set function can be represented as the difference between two supermodular functions \citep{narasimhan2005submodular}.
Moreover, supermodular functions have found extensive applications across diverse fields. For example, they are used in the analysis of economic systems and game theory to model increasing returns and strategic complementarity \citep{Topkis1998}, and in the design of algorithms for optimization problems such as scheduling \citep{chamon2022approximately}, resource allocation \citep{paccagnan2022utility}, and supply chain management \citep{iancu2013supermodularity}.

\paragraph{Cardinality constraints.}
Many real-world applications involve maximization over a  
nonnegative supermodular function $f: 2^V \rightarrow \mathbb{R}_+$ subject to a cardinality constraint. Examples include public-key encryption, computational biology, data sampling, and document summarization (see, e.g., \cite{abw10,abdhk14,bai18greed}). 
Suppose that the ground set $V$ comprises $n$ elements, and  $k$ is a positive integer not exceeding $n$. A subset $S$ of $V$ is classified as: a \emph{$k$-subset} if $|S|=k$, a \emph{$\underline{k}$-subset} if $|S|\le k$, and a \emph{$\overline{k}$-subset} if   $|S|\ge k$. The constrained maximization typically involves finding either a $k$-subset or a $\underline{k}$-subset $S$ of $V$ that maximizes $f(S)$. The problem is referred to as the \emph{${k}$-subset supermodular maximization} (\emph{${k}$SPM}) problem in the former case, and as the \emph{$\underline{k}$SPM} problem in the latter case.

If $f$ is monotone, i.e., $f(B)\le f(A)$ for all $B\subseteq A$, then the $\underline{k}$SPM problem on   $f$ is equivalent to the $k$SPM problem, in that an optimal solution of one problem can be converted to an optimal solution of the other with the same objective value. 

\paragraph{Decomposability.} In practice, the supermodular function $f$ often exhibits certain structural properties related to decomposition, which helps to obtain better approximations for supermodular maximization \cite{chekuri2022densest}. Let $r$ be a positive integer.
A supermodular function $f: 2^V \rightarrow \mathbb{R}_+$ is {\em $r$-decomposable} if there exist subsets $V_1, \dots, V_m$ of $V$ and  supermodular functions $f_i : 2^{V_i} \rightarrow \mathbb{R}_+$, $i=1,\ldots,m$ such that $|V_i| \le r$ for each $i$ and $f(S) =\sum_{i=1}^m f_i(S \cap V_i)$ for each $S \subseteq V$. We call the corresponding decomposition a \emph{$r$-decomposition}, and denote it as $(r;(V_i,f_i)_{i=1}^m)$, or $(V_i,f_i)_{i=1}^m$ if $r$ is clear from the context. 

Trivially, $r$ could be taken as $n$, for which the decomposition is $(V,f)$. Generally, the smaller $r$ is, the better the structural properties $f$ has, and the more algorithmically amenable the maximization on $f$ becomes. As a key example, it is not hard to prove that $f$ is $1$-decomposable if and only if $f$ is modular; that is, $f(A)+f(B)=f(A\cup B)+f(A\cap B)$ for all $A,B\subseteq V$. For modular functions, the \tk SPM and $\underline{k}$SPM problems simplify to selecting the top $k$ most valuable elements.

\paragraph{Connectivity constraints.} Given the decomposition $(V_i,f_i)_{i=1}^m$ for supermodular function $f$, we construct an undirected simple graph $G=(V,E)$ with vertex set $V$ and edge set $E:=\cup_{i=1}^m \{uv:\{u,v\}\subseteq V_i,u\neq v\}$. Every nonempty subset $S$ of $V$ {\em induces} a subgraph of $G$, denoted by $G[S]$, whose vertex set is $S$ and whose edge set consists of edges in $E$ with both end-vertices in $S$. We call the set $S$ {\em connected} if the induced graph $G[S]$ is connected. Imposing connectivity requirements on subsets arises naturally in many applications (see, e.g., \cite{abdhk14,dkmp25,kss15}). This leads to supermodular optimization involving connectivity and cardinality constraints: the \emph{connected $k$-subset supermodular maximization} (C$k$SMP)  problem for finding a connected $k$-subset $S$ with maximum $f(S)$, and the \emph{connected $\underline{k}$-subset supermodular maximization} (C$\underline{k}$SMP)  problem for finding a connected $\underline{k}$-subset $S$ with maximum $f(S)$. 

Unlike the counterpart without connectivity requirement, even if $f$ is monotone, the cardinality constraint $|S|\le k$ in the C$\underline{k}$SPM 
 problem cannot be tightened to $|S|=k$ because possibly no $k$-subset is connected. For example, if $|V_i|=1$ for all $1\le i\le m$, then $G$ is edgeless, and no set of cardinality 2 or more is connected. On the other hand, for monotone functions, the C\tk SPM and C\tkm SPM problems are ``indirectly'' equivalent. This means that the algorithm for one problem can be used to solve the other by examining all connected components of $G$,  without any loss of solution quality or time complexity.

 \bigskip 
For convenience, throughout the paper, we use $\mathscr{F}_r$ to denote the family of monotone nonnegative supermodular functions that are $r$-decomposable, where \emph{$r$ is a constant}.  The main goal of this paper is to develop polynomial-time  $O(n^{(r-1)/2})$-approximation algorithms for the $k$SPM, \tkm SPM, and C\tkm SPM problems on $\mathscr{F}_r$. The algorithms for C\tkm SPM apply to C$k$SPM with an easy expansion from a $\underline{k}$-set to a $k$-set.

\subsection{Related work} 
Maximizing a supermodular function is equivalent to minimizing its negation, i.e., a submodular function. The unconstrained submodular minimization is solvable in (strongly) polynomial time (see \citep{grotschel1981ellipsoid,schrijver2000combinatorial}). Extensive research has been conducted on the optimization problems associated with both submodular and supermodular functions.   The reader is referred, e.g.\ to \citep{nemhauser1978analysis,svitkina2011submodular,buchbinder2015tight,feige2011maximizing,nemhauser1978best,calinescu2011maximizing} for various algorithmic studies in this area.

\citet{svitkina2011submodular} studied submodular minimization with cardinality constraints. They proved that any algorithm that makes a polynomial number of oracle queries cannot achieve an $o\left(\sqrt{\frac{n}{\ln n}}\right)$-approximation for finding a minimum-valued $\underline{k}$-set. 
On the other hand, they proposed a $(5\sqrt{\frac{n}{\ln n}},\frac 12)$-bicriteria approximation algorithm for $\overline{k}$-sets, which outputs a $\overline{k/2}$-set whose value is at most $5\sqrt{\frac{n}{\ln n}}$ times the minimum value of a $\overline{k}$-set. Besides, they showed that there is no $(\rho, \sigma)$-bicriteria approximation algorithm that makes a polynomial number of oracle queries, for any $\rho$ and $\sigma$ with $\frac \rho \sigma=o(\sqrt{\frac{n}{\ln n}})$.   

\citet{bai18greed} designed a $\beta_f$-approximation algorithm for the $k$SPM problem when $f$ is monotone, where $\beta_f=\max_{v\in V} f(v|V\setminus\{v\})/f(v)$. 
Moreover, they 
proved that for any $\epsilon>0$, the $k$SPM problem does not admit any  $(\beta_f-\epsilon)$-approximation in polynomial time.  
However, the ratio $\beta_f$ is unbounded in a general sense as there may be $v\in V$ such that $f(v)=0$ and $f(v|V\setminus\{v\})>0$ (see Appendix~\ref{sec:bai} for an example of $f\in\mathscr F_2$).
In addition, they proved that for any $\epsilon > 0$, there is no $O(2^{(0.5-\epsilon) n})$-approximation in polynomial time even when considering only monotone functions and allowing randomized algorithms.

To the best of our knowledge, apart from the work by \citet{bai18greed}, no approximation algorithms were proposed for the general $k$SPM and C$k$SPM problems, even under the common assumption that the input nonnegative supermodular function is monotone and 
$r$-decomposable.  In contrast, a substantial literature addresses special cases of these problems, mainly focusing on instances arising from hypergraphs and ordinary graphs. 

\paragraph{Special cases of $k$SPM.}
Let $H = (V,E)$ be a hypergraph with vertex set $V$ and hyperedge set $E$. For any subset $S$ of $ V$, let $E(S)=\{e\in E:e\subseteq S\}$ denote the set of hyperedges in the subhypergraph of $H$ induced by $S$. The \emph{density} of this subhypergraph, a.k.a.\ the \emph{density} of $S$, is defined as  $\mu(S)=|E(S)|/|S|$. While the problem of finding a densest subhypergraph is solvable in polynomial time \citep{chlamtac2018densest,hu2017maintaining}, its cardinality-constrained variant is NP-hard. This variant is known as the \emph{densest $k$-subhypergraph} (D$k$SH) problem, where one needs to find a $k$-subset $S$ of $V$ that maximizes the density $\mu(S)=|E(S)|/k$, or equivalently, maximizes the number $|E(S)| $ of hyperedges contained in $S$. It is obvious that $\mu(S)=|E(S)|/k$ is a supermodular function on $2^V$, and therefore, the $k$SPM problem generalizes the D$k$SH problem. Besides, the supermodular function $\mu$ is nonnegative, monotone, and $r$-decomposable, where $r=\max\{|e|:e\in E\}$ is often called  the \emph{rank} of $H$.
Although  $\beta_\mu=+\infty$ in general, researchers managed to obtain better approximations for uniform hypergraphs. A hypergraph is $r$-uniform if every hyperedge in it consists of exactly $r$ vertices. \citet{chlamtac2018densest} designed an $O(n^{0.697831+\epsilon})$-approximation algorithm for D$k$SH on 3-uniform hypergraphs that runs in $n^{O(1/\varepsilon)}$ time. However, this approximation does not extend to the general rank-$3$ hypergraphs.

When restricted to 2-uniform hypergraphs, i.e., ordinary graphs $G$, the D$k$SH problem reduces to the well-known \emph{densest $k$-subgraph} (D$k$S) problem, which was first investigated by \citet{kortsarz1993choosing} in 1993. Since then, it has been extensively studied in both directions of designing approximation algorithms and proving inapproximation lower bounds \citep{kortsarz1993choosing,feige2001dense,feige2001approximation,asahiro2000greedily,bhaskara2010detecting,bhaskara2012polynomial,braverman2017eth,manurangsi2017almost}.

On the positive side, as shown by \citet{feige2001dense}, every approximation algorithm designed for the unweighted version of D$k$S, in which all edges have weight 1, can be extended to handle the (general) weighted case. This extension introduces an additional $O(\log n)$ factor to the approximation ratio. The approximation ratio for weighted D$k$S has been improved over the years. \citet{kortsarz1993choosing} first provided {an $\mathcal O(n^{0.3885})$-approximation algorithm {that} runs in  $\mathcal O(n^5)$ time, where the notation $\mathcal O$ ignores polylogarithmic factors in $n$}. Later, \citet{feige2001dense} improved the approximation ratio to $O(n^{1/3-\delta})$ for some small $\delta>0$ (which was estimated to be roughly 1/60). This approximation ratio had remained the best for almost ten years until 2010 when \citet{bhaskara2010detecting} proposed an $O(n^{1/4+\varepsilon})$-approximation algorithm that runs in $n^{O(1/\varepsilon)}$ time. This remains the state-of-the-art approximation ratio for the D$k$S problem when parameterized solely by $n$. When the desired subgraph size $k$ is also involved, combining several ideas from binary search, ``expanding kernels'' and conditional probability, \citet{kortsarz1993choosing} gave an $O(n/k)$-approximation algorithm {that} runs in $O(m\log W)$ time, where $m$ is the number of edges in graph $G$ and $W$ is the total weight of $\min\{m,k^2\}$ edges of highest weights. When $k<n/3$, \citet{asahiro2000greedily} proved the approximation ratio of $\Theta(n/k)$ for the heuristic that repeatedly removes a vertex with the minimum {weighted degree} until there are $k$ vertices left. Later, \citet{feige2001dense} showed that a different procedure using greedy attachments finds an {$O(n/k)$-approximation}  in $O(m+n\log n)$ time. Although linear and semidefinite programming relaxation approaches have been adopted in \citep{feige2001approximation,han2002improved,srivastav1998finding} to design algorithms with approximation {ratios} somewhat better than $O(n/k)$ for certain value ranges of $k$, the $O(n/k)$ ratio remains the state-of-the-art for D$k$S in the general case, applying to all values of $n$ and $k$ for both weighted and unweighted graphs. For some special cases characterized by restrictive graph classes, specific values of parameter $k$, and particular (weighted) densities of optimal solutions, improved approximations have been achieved {\citep{arora1999polynomial,kortsarz1993choosing,demaine2005algorithmic,liazi2007densest,liazi2008constant,chen2011densest,nonner2016ptas}}. 

On the negative side, no nontrivial lower bound on the approximability of D$k$S has been known under the standard assumption of $\text{P}\neq \text{NP}$. However, $(1+\varepsilon)$-inapproximability  \cite{feige2002relations,khot2006ruling},  constant-factor-inapproximability and superconstant-factor-inapproximability \cite{raghavendra2010graph,aammw11} were established for (unweighted) D$k$S under various complexity assumptions.  \citet{manurangsi2017almost} showed that, under the exponential time hypothesis (ETH), no polynomial-time algorithm can approximate D$k$S to within $n^{1/(\log\log n)^c}$ factor of the optimum, where $c>0$ is a universal constant independent of $n$. Recently, \citet{jones2023sum} provided the hardness evidence for D$k$S by showing lower bounds against the powerful Sum-of-Squares (SoS) algorithm: for any $\epsilon > 0$, there exists a constant $\delta > 0$ such that degree-$n^{\delta}$ SoS exhibits an integrality gap of $O(n^{1/4 - \epsilon})$ for D$k$S. The gap matches the approximation ratio by \citet{bhaskara2010detecting} mentioned above.

\paragraph{Special case of C${k}$SPM.} When graphical connectivity constraint is imposed on the D$k$S, the problem transforms to a special case of  C${k}$SPM. Given a graph $G=(V,E)$ with edge weights $w\in\mathbb R_+^E$, this yields a 2-decomposition $(\{u,v\},w_{uv}/k)_{uv\in E}$ of monotone supermodular function $\mu:2^V\rightarrow\mathbb R_+$ with $\mu(\{u,v\})=w_{uv}$ for all $uv\in E$. A subgraph with $k$ vertices in $G$ is termed a \emph{$k$-subgraph}. Assuming $G$ is connected, the \emph{densest connected $k$-subgraph} (DC$k$S) problem seeks a connected $k$-subgraph  in $G$ of maximum weighted density. This amounts to identifying a connected $k$-subset $S$ of $V$ with maximum $\mu(S)$. 

Compared with the large literature on D$k$S, the work on approximating the densest connected $k$-subgraphs is relatively limited. A trivial $O(k)$-approximation can be achieved by examining all stars \citep{kortsarz1993choosing}. \citet{chen2017finding} designed an $O(nm)$-time algorithm that finds a connected $k$-subgraph whose weighted density is at least  $\Omega(k^2/n^2)$ of the maximum weighted density among all $k$-subgraphs of $G$, which are not necessarily connected. It follows that DC$k$S is approximable within a factor of $O(\min\{k,n^2/k^2\})=O(n^{2/3})$. For the unweighted version of DC$k$S, an $O(n^{2/5})$-approximation can be guaranteed by combining the $O(n^2/k^2)$-approximation algorithm with various methods that search for dense connected $k$-subgraphs based on densest subgraphs, high-degree vertices and 2-hop neighborhoods of vertices \citep{chen2017finding}. Other works on {designing polynomial-time algorithms} for DC$k$S only deal  with special graphical topologies, such as trees \cite{ps83,cp84}, $h$-trees, cographs, split graphs \cite{cp84}, complete graphs \cite{ravi1994heuristic,hrt97}, and interval graphs whose clique graphs are paths~\cite{lmz05}.

\paragraph{Upper bound of ${k}$SPM.}\; The  ${k}$SPM problem over a supermodular function  $f: 2^V \rightarrow \mathbb{R}_+$ is related to the \emph{densest at-least-$k$ supermodular subset} (D$\overline{k}$SS) problem. The objective of D$\overline{k}$SS to find a $\overline{k}$-set $S$ of $V$ that maximizes $f(S)/|S|$. The optimal (approximate) solutions to D$\overline{k}$SS provide upper bounds on the objective values of the \tk SPM problem. Indeed, for any optimal solution $S^*$ of the D$\overline{k}$SS and any $k$-subset $S$, it holds that
$f(S^*)\ge f(S^*)\cdot \frac{|S|}{|S^*|}\ge \frac{f(S)}{|S|}\cdot |S|=f(S)$.
 \citet{chekuri2022densest} established the following result.

\begin{theorem}[\citet{chekuri2022densest}]\label{thm:dkpss}
    There is an $O(n^{2})$ time $(r+1)$-approximation algorithm for D$\overline{k}$SS on nonnegative supermodular functions that are monotone and $r$-decomposable. 
\end{theorem}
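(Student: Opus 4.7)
The plan is to analyze a Charikar-style greedy peeling procedure. Starting from $S_0 = V$, iteratively remove a vertex of smallest marginal $\phi(v,S) := f(S) - f(S\setminus\{v\})$, producing a chain $V = S_0 \supsetneq S_1 \supsetneq \cdots \supsetneq S_{n-1}$ with $|S_i| = n - i$; output the $S_i$ of size $\ge k$ that maximizes the density $\mu(S) := f(S)/|S|$. In the value-oracle model this runs in $O(n^2)$ time, since each of the $n-1$ peeling steps computes at most $|S|$ marginals. Write $\mu_{\text{best}} := \max\{\mu(S_i) : |S_i| \ge k\}$, let $S^*$ be an optimum D$\overline{k}$SS solution of density $\mu^* = \mu(S^*)$ and cardinality $k^* = |S^*| \ge k$; the goal is to prove $\mu_{\text{best}} \ge \mu^*/(r+1)$.

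Two lemmas drive the argument. First, $r$-decomposability yields an averaging bound on marginals: for any $S$,
\[
\sum_{v \in S} \phi(v,S) \;=\; \sum_i \sum_{v \in S \cap V_i} \bigl[f_i(S\cap V_i) - f_i((S\cap V_i)\setminus\{v\})\bigr] \;\le\; \sum_i |S \cap V_i|\, f_i(S \cap V_i) \;\le\; r\, f(S),
\]
using nonnegativity of each $f_i$ and $|V_i| \le r$, so the minimum marginal satisfies $\min_{v \in S}\phi(v,S) \le r\mu(S)$. Second, a supermodular telescoping bound on $f(S^*)$: enumerate $S^* = \{u_1,\ldots,u_{k^*}\}$ in peeling order (so $u_1$ is removed first); since $u_{j+1},\ldots,u_{k^*}$ are still unpeeled when $u_j$ is peeled, $\{u_{j+1},\ldots,u_{k^*}\} \subseteq S_{\pi(u_j)-1}\setminus\{u_j\}$, and supermodularity gives
\[
f(S^*) \;=\; \sum_{j=1}^{k^*} f\bigl(u_j \mid \{u_{j+1},\ldots,u_{k^*}\}\bigr) \;\le\; \sum_{j=1}^{k^*} \phi(u_j, S_{\pi(u_j)-1}),
\]
where $\pi(u)$ denotes the peeling position of $u$.

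To combine these, partition $S^*$ by whether $|S_{\pi(u)-1}| \ge k$. For an ``early'' $u$, the peeling rule together with the first lemma give $\phi(u, S_{\pi(u)-1}) \le r\mu(S_{\pi(u)-1}) \le r\mu_{\text{best}}$, contributing at most $rk^*\mu_{\text{best}}$ in total. For a ``late'' $u$ (one with $\pi(u) \ge n-k+2$), nonnegativity of the marginals and telescoping over the corresponding peeling steps yields
\[
\sum_{u\text{ late}} \phi(u, S_{\pi(u)-1}) \;\le\; \sum_{i=n-k+1}^{n-1} \bigl[f(S_i) - f(S_{i+1})\bigr] \;=\; f(S_{n-k+1}) \;\le\; f(S_{n-k}) \;=\; k\mu(S_{n-k}) \;\le\; k\mu_{\text{best}}.
\]
Adding the two and using $k^* \ge k$ gives $\mu^* \le (r + k/k^*)\mu_{\text{best}} \le (r+1)\mu_{\text{best}}$. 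The main obstacle I anticipate is the late-vertex handling: a naive per-vertex bound on those marginals would incur a factor of $k$, but the telescoping collapse to $f(S_{n-k+1})$ is exactly what converts the cardinality slack into a single additive $\mu_{\text{best}}$, delivering the tight $(r+1)$ ratio.
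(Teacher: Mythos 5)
Your proposal is correct, but note that the paper does not prove Theorem~\ref{thm:dkpss} itself --- it cites it directly from \citet{chekuri2022densest} --- so there is no in-paper proof to compare against. Your argument is a clean and valid reconstruction. The algorithm is the standard Charikar-style peeling, your first lemma is precisely the paper's Proposition~\ref{prop:cf} ($c_f\le r$), the telescoping bound on $f(S^*)$ via the peeling order is the usual supermodular witness argument, and the early/late split with the telescoping collapse of the late contribution to $f(S_{n-k+1})\le k\mu_{\mathrm{best}}$ is exactly the Khuller--Saha device for ``at-least-$k$'' density. The bookkeeping checks out: early vertices contribute at most $r k^*\mu_{\mathrm{best}}$, the late vertices (including a possible never-peeled survivor, handled by taking $S_n=\emptyset$) contribute at most $k\mu_{\mathrm{best}}$, and $k/k^*\le 1$ gives the $(r+1)$ factor; the $O(n^2)$ oracle-call count also matches. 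One small remark: your identity $f(S^*)=\sum_j f(u_j\mid\{u_{j+1},\dots,u_{k^*}\})$ implicitly uses $f(\emptyset)=0$, but this is harmless --- without normalization both sides acquire a $-f(\emptyset)$, and the telescoped late sum likewise ends at $-f(\emptyset)$, so the final inequality $f(S^*)\le r k^*\mu_{\mathrm{best}}+f(S_{n-k})$ survives unchanged. This is consistent with how the present paper uses the same peeling primitive and Proposition~\ref{prop:cf} in its own Algorithm~\ref{alg:pmkss}.
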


Moreover,  \citet{chekuri2022densest} designed a polynomial-time $2$-approximation algorithm for D$\overline{k}$SS on monotone nonnegative supermodular functions.

As a special case of D$\overline{k}$SS, the densest at-least-$k$ subgraph (D$\overline{k}$S) problem is to find a densest subgraph with $k$ or more vertices. \citet{khuller2009finding} proved that the D$\overline{k}$S problem is NP-hard, and proposed a maximum-flow-based  2-approximation algorithm for D$\overline{k}$S, which runs in $O(n^2m\log^2n)$ time. They also presented an LP-based polynomial-time algorithm with the same approximation ratio 2. The algorithm by \citet{andersen2009finding} attains an approximation ratio of 3 for D$\overline{k}$S with shorter running time $O(m+n\log n)$.

\subsection{Our results} 

In this paper, we propose $O(n^{(r-1)/2})$-approximation algorithms for supermodular maximization on $\mathscr{F}_r$ with a cardinality constraint or with both cardinality and connectivity constraints. The algorithm for $k$SPM and \tkm SPM runs in $O(n^{r+1})$ time, and that for C$k$SPM and C$\underline{k}$SPM runs in $O(mn^{r+1})$ time. 
These general settings of constrained supermodular maximization have been less explored in the literature. To the best of our knowledge, the only existing result is the $\max\{f(v|V\setminus\{v\})/f(v):v\in V\}$-approximation for $k$SPM \cite{bai18greed}, where the approximation ratio can be arbitrarily large, even for $f\in\mathscr F_2$, and when the size of the ground set is fixed at $n\ge4$ (see Appendix~\ref{sec:bai}).

Notably, our result on C${k}$SPM implies an $O(n^{0.5})$-approximation algorithm for the densest connected $k$-subgraph (DC$k$S) problem, improving the ratio in \citep{chen2017finding} by a multiplicative factor of $n^{0.1666}$. 

Because $r$ is a constant, our algorithm design focuses on the instances in which $k>r$.  Conversely, when $k \le  r$, the optimal solution can be found in $O(n^r)$ time through a brute-force search. Specifically, we develop four algorithms as listed in Table \ref{tb:algorithm}. Choosing the better solution from the outputs of the first (resp.\ last) two algorithms gives an $O(n^{(r-1)/2})$-approximation for $k$SPM and \tkm SPM (resp.\ C$\underline{k}$SPM) on $\mathscr F_r$. The solution of C$\underline{k}$SPM is easily expanded to a solution of C$k$SPM with the same performance guarantee.

\begin{table}[h!]
\centering
{\small
\renewcommand{\arraystretch}{1.1} 
\begin{tabular}{|c|c|c|p{5.5cm}|} 
\hline
\textbf{Problem} & \textbf{Approximation ratio} & \textbf{Runtime} & \multicolumn{1}{c|}{\textbf{Algorithm}} \\ 
\hline
$k$SPM (\tkm SPM) & $O\left({n^{r-1}}/{k^{r-1}}\right)$ & $O(n^2)$ & Alg.\ \ref{alg:pmkss} (Greedy Peeling) \\
\hline
$k$SPM (\tkm SPM) & $O(k^{r-1})$ & $O(n^{r+1})$ & Alg.\ \ref{alg:chmkss2} (Batch-greedy Augmenting) \\
\hline
C$\underline{k}$SPM & $O(k^{r-1})$ & $O(mn^{r+1})$ & Alg.\ \ref{alg:star} (Star Exploration) \\
\hline
C$\underline{k}$SPM & $O\left({n^{r-1}}/{k^{r-1}}\right)$ & $O(mn^2)$ & Alg.\ \ref{alg:overall} (Removing-Attaching-Merging) \\
\hline
\end{tabular}
}
 \caption{Approximation algorithms for $k$SPM, \tkm SPM and C\tkm SPM problems on $\mathscr F_r$, where $k>r$}
    \label{tb:algorithm}
\end{table}

The algorithms listed in Table~\ref{tb:algorithm} differ in their input requirements. Algorithm \ref{alg:pmkss} requires the least information, needing only an oracle for the input function $f$. Algorithm \ref{alg:chmkss2} requires an oracle for $f$ and the value of $r$. In contrast, Algorithms \ref{alg:star} and \ref{alg:overall} are the most demanding, requiring the detailed decomposition information $(V_i, f_i)_{i=1}^m$ of the function $f \in \mathscr{F}_r$, including oracles for each component function $f_i$, $i=1,\ldots,m$.

We design Algorithms \ref{alg:pmkss} and \ref{alg:chmkss2} not only to solve the $k$SPM problem but also to serve as subroutines for Algorithms \ref{alg:star} and \ref{alg:overall}, respectively, to aid in solving the C\tkm SPM problem.

 The approximation ratios of our algorithms for C\tkm SPM are actually evaluated against the optimum objective value of the \tkm SPM problem (see Corollary \ref{cor:strong}). The stronger guarantees imply that the optimal solution without the connectivity constraint can be approximated by a connected solution within the stated ratio, though at the expense of a runtime increase by a factor of $m$.

\paragraph{Algorithms for \tk SPM and \tkm SPM.} We adopt two complementary strategies to construct a $k$-subset that approximates the optimal solution of the $k$SPM problem on $\mathscr F_r$. The first method, \emph{Greedy Peeling} (Algorithm \ref{alg:pmkss}), starts with the entire set of elements and iteratively removes the one with the smallest marginal contribution until a set of size $k$ remains. This approach yields an approximation ratio of $O(n^{r-1}/k^{r-1})$ (Theorem~\ref{thm:peeling}).

In contrast, the second method, \emph{Batch-greedy Augmenting} (Algorithm \ref{alg:chmkss2}), builds a solution from an empty set. Instead of adding a single element one by one, our algorithm employs a batch-greedy strategy, adding an $r$-subset that provides the maximum increase in function value at each step. This algorithm achieves an $O(k^{r-1})$ approximation (Theorem \ref{thm:add}). Note that the single-element-greedy augmenting by \citet{bai18greed} does not work well even for monotone 2-decomposable supermodular functions, as it suffers from an arbitrarily large approximation ratio independent of $n \ge 4$ (see Appendix \ref{sec:bai}). 

By combining these two algorithms and selecting the better of their outputs, we establish an overall approximation ratio of $O(n^{(r-1)/2})$ for the \tk SPM (\tkm SPM) problem on $\mathscr F_r$
(Theorem~\ref{thm:kspm}). Note that for the densest $k$-subhypergraph (D$k$SH) problem, the current best approximation ratios are $O(n^{1/4+\epsilon})$ for ordinary graphs ($r=2$) and $O(n^{0.697831+\epsilon})$ for 3-uniform hypergraphs (a special case of $r=3$). While our algorithm's approximation ratios do not surpass these when applied to these problems, its key advantage lies in its simplicity and its capability to handle all hypergraphs with rank $r$. 

Our work on greedy peeling for $k$SPM generalizes the current best $O(n/k)$-approximation ratio (with respect to $n$ and $k$ together) for the densest $k$-subgraph (D$k$S) problem \cite{kortsarz1993choosing,asahiro2000greedily}, which constitutes a special case of the $k$SPM with $r=2$. Notably, without the monotonicity assumption on the underlying supermodular function, a slight modification of the greedy peeling provides a \tkm-subset that is an $O(n^{r-1}/k^{r-1})$-approximation for the \tkm SPM problem (Corollary~\ref{cor:peeling}).

\paragraph{Algorithms for C\tkm SPM.} At a high level, the algorithms work by first generating a candidate pool of polynomially many connected \tkm-subsets; second, identifying and outputting the subset with the maximum value from this pool.

The second-tier strategy of the algorithm design employs a ``break-and-build'' approach. ``Breaking'' means fragmenting graph $G$ into ``densely'' connected components by eliminating elements with ``low'' utilities. The larger resulting components, which can deliver adequate value, become candidate \tkm-subsets added to the pool. ``Building'' operates in two ways: first, by grouping and merging smaller components scattered during the breaking phase to restore connectivity; second, by proactively generating candidate \tkm-subsets within connected sets using attachment methods. Specifically, to address the additional connectivity constraint, we use the idea of attaching ``valuable'' elements to a connected subset $S$ (referred to as the core). Based on the core and the input function $f$, we define a new supermodular function $f^S\in\mathscr F_{r-1}$ and construct an accompanying set $T$ that holds potential elements for attachment. All elements $u\in T$ with positive marginal values $f^S(u|T\setminus\{u\})$ are attached to the core to form a candidate $\underline{k}$-subset, where the positive marginals ensure not only the connectivity of the resulting \tkm-subset (Lemma~\ref{lem:connect}) but also, to some extent, its quality.

The third level of our algorithm design is the operational layer, which comprises two complementary algorithms. The first method, \emph{Star Exploration} (Algorithm 3), systematically explores star structures centered at each element (vertex) $v\in V$. For each core $\{v\}$,  batch-greedy augmenting (developed in Section \ref{sec:augment}) is used to construct $\{v\}$'s accompanying set $T$. Then, from $T$, promising ``leaves'' (elements with positive marginals) are selected and attached to the center $v$, forming a star-shaped connected \tkm-subset, which is added to the candidate pool. The best ``star'' in the pool yields an approximation ratio of $O(k^{r-1})$ for the C\tkm SPM problem (Theorem \ref{thm:star}).

The second method, \emph{Removing-Attaching-Merging} (Algorithm 4), is more intricate. This algorithm operates in stages, beginning with a \emph{removing} procedure (Procedure \ref{alg:pre}) that deletes elements with low marginal contributions (relative to density) and possibly other elements to produce a dense $\overline{k}$-subset $R$. The subsequent stage depends on the connectedness of $R$. 
\begin{itemize}
    \item  Either $R$ is connected: An \emph{attaching} procedure (Procedure \ref{alg:sunflower}) constructs a connected \tkm-subset of $R$ as a candidate subset: attaching elements (referred to as petals) to a connected $\lceil (r-1)k/r \rceil$-subset $C$ (the core). The petal selections are guided by function $f^C$ and $C$'s accompanying set $T$, which is constructed using greedy peeling (presented in Section \ref{sec:peel}). 
    \item 
Or $R$ is disconnected and contains no connected $\overline{k}$-subset:  All maximal connected $\overline{k/3}$-subsets of $R$ are added to the candidate pool; and a \emph{merging} procedure (Procedure~\ref{alg:merge}) groups and connects the remaining maximal connected subsets of $R$ (each with cardinality smaller than $k/3$) into an $O(n/k)$ number of larger candidate connected \tkm-subsets. The merging process constitutes the most technical part of our algorithm, where a spanning tree in a contracted graph is pruned step by step to show how these $\underline{(k/3-1)}$-subsets are merged under the principle that ``closed'' subsets have a higher chance of being merged.
\end{itemize}
By selecting the best connected \tkm-subset from the candidate pool, this composite algorithm provides an $O(n^{r-1}/k^{r-1})$-approximation for the C\tkm SPM problem (Theorem~\ref{thm:overall}).

Combining Algorithms \ref{alg:star} and \ref{alg:overall} (selecting the better of their outputs), we establish an overall approximation ratio of $O(n^{(r-1)/2})$ for the C\tkm SPM problem on $\mathscr F_r$ (Theorem \ref{thm:final}).

\bigskip

The remainder of this paper is organized as follows. Sections \ref{section:mkss} and \ref{section:mckss} present approximation algorithms for $k$SPM and C$\underline{k}$SPM problems, respectively. Section \ref{section:con} summarizes the paper and discusses possible directions for future research.

\section{Approximation under cardinality constraint}\label{section:mkss}
For any positive integer $h$, the symbol $[h]$ denotes the set of all positive integers at most $h$. Given a nonnegative supermodular function $f: 2^V \rightarrow \mathbb{R}_+$ and an integer $k\in[n]$, we are concerned with the ${k}$SPM  problem for finding a ${k}$-subset $S\subseteq V$ that maximizes $f(S)$. By iterative greedy peelings starting from $V$ (see Section \ref{sec:peel}) and iterative batch-greedy augmentations starting from $\emptyset$ (see Section \ref{sec:augment}), we derive a $k$-subset that is an $O(\min\{n^{r-1}/k^{r-1},k^{r-1}\})$-approximation for the \tk SPM problem on $\mathscr F_r$. To avoid discussing trivial cases, henceforth we assume $k\in[n-1]$.

\paragraph{Decomposition parameters.} Before proceeding with the algorithmic details, we discuss some basic properties of supermodular functions, which are useful in analyzing our algorithms. For any $S\subseteq V$, any normalized supermodular function $f$ satisfies the well-known property that 
\begin{equation}\label{eq:super}
  \sum_{v\in S}f(v|S\setminus\{v\})\ge f(S),  
\end{equation}
which follows from a telescoping sum. \citet{chekuri2022densest} defined the following parameter that reflects $f$'s quality in terms of modularity and decomposability: 
\[c_f:=\max_{S\subseteq V}\frac{\sum_{v\in S}f(v|S\setminus\{v\})}{f(S)},\]
where, for notational convenience, when $f(S)=0$,   it is assumed that $\frac{\sum_{v\in S}f(v|S\setminus\{v\})}{f(S)}=1$. It is straightforward that 
\begin{equation}\label{eq:cf}
 \min_{v\in S} f(v|S\setminus\{v\})\le \frac{\sum_{v\in S}f(v|S\setminus\{v\})}{|S|}\le c_f \frac{f(S)}{|S|},\text{ for all }S\subseteq V.   
\end{equation}
Moreover, $c_f$ serves as a lower bound for the decomposition property of the function $f$.
\begin{proposition}[\citet{chekuri2022densest}]\label{prop:cf}
    For any nonnegative $r$-decomposable supermodular function $f:2^V\rightarrow \mathbb{R}_+$, it holds that $c_f \le r$, and therefore $\sum_{v\in S}f(v|S\setminus\{v\})\le r\cdot f(S)$ for all $S\subseteq V$.
\end{proposition}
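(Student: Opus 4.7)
The plan is to reduce the claim to a per-component statement via the $r$-decomposition and then exploit nonnegativity of the components. The first step is to use additivity of the marginals under the decomposition: for any $v \in S$,
\[ f(v \mid S\setminus\{v\}) \;=\; \sum_{i:\,v \in V_i} f_i\bigl(v \mid (S \cap V_i)\setminus\{v\}\bigr), \]
because the summands with $v \notin V_i$ cancel. Summing over $v \in S$ and swapping the order of summation, I would rewrite the quantity of interest as
\[ \sum_{v\in S} f(v\mid S\setminus\{v\}) \;=\; \sum_{i=1}^{m} \sum_{v \in A_i} f_i\bigl(v \mid A_i \setminus \{v\}\bigr), \qquad A_i := S\cap V_i. \]

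The second step is to bound each inner sum. For each $i$ and each $v \in A_i$, nonnegativity of $f_i$ gives
\[ f_i(v\mid A_i\setminus\{v\}) \;=\; f_i(A_i) - f_i(A_i\setminus\{v\}) \;\le\; f_i(A_i). \]
Summing over the at most $r$ elements of $A_i$ yields $\sum_{v\in A_i} f_i(v\mid A_i\setminus\{v\}) \le |A_i|\,f_i(A_i) \le r\,f_i(A_i)$, using $|A_i|\le |V_i|\le r$.

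The third step is to reassemble: summing the per-component bound over $i$ and invoking $f(S)=\sum_i f_i(A_i)$ gives $\sum_{v\in S} f(v\mid S\setminus\{v\}) \le r\cdot f(S)$, which is exactly the second assertion. The bound $c_f\le r$ then follows by dividing by $f(S)$ when $f(S)>0$. For the degenerate case $f(S)=0$, the nonnegativity of each $f_i$ forces every $f_i(A_i)=0$, so each marginal on the left equals $-f_i(A_i\setminus\{v\})\le 0$ and the inequality still holds; the stated convention fixes the corresponding ratio at $1\le r$.

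I do not anticipate a genuine obstacle: the argument is purely calculational once the marginals are split across components. The only thing worth double-checking is the bookkeeping in the double sum (only indices $i$ with $v\in V_i$ contribute, and for such $i$ one has $v\in A_i$, so the inner marginal is well-defined). It is also worth noting that the proof does not use supermodularity of $f$ or $f_i$, nor monotonicity—only nonnegativity of the components and the rank bound $|V_i|\le r$.
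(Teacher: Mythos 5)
The paper states Proposition~\ref{prop:cf} with a citation to \citet{chekuri2022densest} and does not reprove it, so there is no in-paper proof to compare against. Your argument is correct and is essentially the standard one: split the marginal $f(v\mid S\setminus\{v\})$ across the components via the decomposition identity, exchange the order of summation to get $\sum_i \sum_{v\in A_i} f_i(v\mid A_i\setminus\{v\})$ with $A_i=S\cap V_i$, bound each inner marginal by $f_i(A_i)$ using nonnegativity of $f_i$, and absorb the factor $|A_i|\le r$. Your handling of the $f(S)=0$ case via the convention is also fine (indeed $r\ge1$). Your closing observation is accurate and worth keeping in mind: the bound uses only nonnegativity of the component functions $f_i$ and the rank bound $|V_i|\le r$, so it holds for any nonnegatively $r$-decomposable set function regardless of supermodularity or monotonicity of $f$ or the $f_i$'s; supermodularity is needed elsewhere (e.g., for the lower bound $\sum_{v\in S}f(v\mid S\setminus\{v\})\ge f(S)$ in (\ref{eq:super})) but not for this upper bound.
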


\paragraph{Normalization.} A function $f$ is \emph{normalized} if $f(\emptyset)=0$. Any function $f$ can be readily normalized by subtracting the constant $f(\emptyset)$ from its value for every subset. This normalization is a convenient preprocessing step that does not negatively impact our analysis. As shown in Appendix \ref{sec:norm}, the process does not decrease the approximation ratio for the supermodular maximization under consideration. Furthermore, it preserves essential properties of the function, including non-negativity, monotonicity, supermodularity (and $r$-decomposability). Henceforth, we assume that all such functions are normalized, even if not explicitly stated.

\subsection{Greedy peeling} \label{sec:peel}
Using a similar idea to \citet{kortsarz1993choosing} and \citet{asahiro2000greedily} for finding a dense subgraph, we give the following algorithm for $k$SPM, which repeatedly removes the element with the currently smallest marginal contribution. 
 
\smallskip

\begin{algorithm}[H]
    \caption{Greedy peeling for $k$-subsets}\label{alg:pmkss}
    \KwIn{Supermodular function $f:2^V\rightarrow \mathbb{R}_+$ and integer $k\in[n-1]$}
    $S_n\gets V$\;
    \For{$i\gets n$ \KwDownto $k+1$}
    {
        $v_i\gets\text{argmin}_{v\in S_i}\{f(v|S_i\setminus\{v\})\}$\;
        $S_{i-1}\gets S_i\setminus\{v_i\}$\;
    }
    \textbf{Output} $\textsc{Alg\ref{alg:pmkss}}(f,k):=S_k$.
\end{algorithm}

\medskip
 Note that the execution of Algorithm \ref{alg:pmkss} does not make any assumption on the decomposability of the underlying supermodular function. Even the value of the parameter $r$ does not need to be known.

\begin{lemma}\label{lma:pdk}
 If $f$ is $r$-decomposable and $r<k$, then   $f(S_i)/ f(S_j)\ge \binom{i}{r}/\binom{j}{r}\ge \frac{r!}{r^{r}}\cdot \frac{i^{r}}{j^{r}} $ for all $i,j$ with $k\le i\le j\le n$.
\end{lemma}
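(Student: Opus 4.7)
The plan is to establish the one-step ratio $f(S_i)/f(S_{i+1}) \ge \binom{i}{r}/\binom{i+1}{r}$ for every $i$ with $k \le i \le n-1$, then telescope over $\ell = i, \ldots, j-1$ to obtain the first inequality, and finally derive the elementary lower bound on $\binom{i}{r}/\binom{j}{r}$ by bounding the falling factorials against powers.

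For the one-step bound, I would use the peeling rule together with Proposition \ref{prop:cf}. Since $v_{i+1}$ is chosen in $S_{i+1}$ to minimize the marginal $f(v \mid S_{i+1}\setminus\{v\})$, the minimum is at most the average, so
\begin{equation*}
f(v_{i+1} \mid S_i) \;=\; f(v_{i+1}\mid S_{i+1}\setminus\{v_{i+1}\}) \;\le\; \frac{1}{i+1}\sum_{v\in S_{i+1}} f(v\mid S_{i+1}\setminus\{v\}) \;\le\; \frac{r\,f(S_{i+1})}{i+1},
\end{equation*}
where the last step invokes Proposition \ref{prop:cf} applied to $f$ on the subset $S_{i+1}$ (the restriction of an $r$-decomposable supermodular function is $r$-decomposable). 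Since $f(S_{i+1}) - f(S_i) = f(v_{i+1}\mid S_i)$, this rearranges to
\begin{equation*}
\frac{f(S_i)}{f(S_{i+1})} \;\ge\; 1 - \frac{r}{i+1} \;=\; \frac{i+1-r}{i+1} \;=\; \frac{\binom{i}{r}}{\binom{i+1}{r}},
\end{equation*}
the last equality being a routine cancellation. The assumption $i \ge k > r$ guarantees that $i+1-r>0$ and that $\binom{i}{r}>0$ throughout, so telescoping
\begin{equation*}
\frac{f(S_i)}{f(S_j)} \;=\; \prod_{\ell=i}^{j-1} \frac{f(S_\ell)}{f(S_{\ell+1})} \;\ge\; \prod_{\ell=i}^{j-1} \frac{\binom{\ell}{r}}{\binom{\ell+1}{r}} \;=\; \frac{\binom{i}{r}}{\binom{j}{r}}
\end{equation*}
yields the first inequality. (If some $f(S_{i+1})=0$, then by monotonicity of $f(S_\cdot)$ under peeling from our bound $f(S_\ell)=0$ for all $\ell\le i+1$, and the claimed ratio inequality is interpreted in the natural way.)

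For the second inequality, I would write $\binom{i}{r}/\binom{j}{r} = \bigl[\prod_{s=0}^{r-1}(i-s)\bigr] / \bigl[\prod_{s=0}^{r-1}(j-s)\bigr]$. The denominator is bounded above by $j^r$. For the numerator, since $i \ge k > r$, for each $s\in\{0,\ldots,r-1\}$ we have $1 - s/i \ge 1 - s/r$, hence
\begin{equation*}
\prod_{s=0}^{r-1}(i-s) \;=\; i^r \prod_{s=0}^{r-1}\Bigl(1-\tfrac{s}{i}\Bigr) \;\ge\; i^r \prod_{s=0}^{r-1}\Bigl(1-\tfrac{s}{r}\Bigr) \;=\; \frac{r!}{r^r}\,i^r.
\end{equation*}
Dividing gives $\binom{i}{r}/\binom{j}{r} \ge (r!/r^r)\cdot i^r/j^r$.

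There is no real obstacle here beyond correctly applying Proposition \ref{prop:cf} at each intermediate size; the monotonicity of $f$ is not needed for the lemma, only the $r$-decomposability and the greedy peeling rule. The condition $r<k \le i$ is what lets the binomial ratio be positive and the inequality $1-s/i\ge 1-s/r$ hold throughout.
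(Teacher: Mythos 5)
Your proposal is correct and follows essentially the same route as the paper: the one-step bound $f(S_i)\ge\frac{i+1-r}{i+1}f(S_{i+1})$ comes from the minimum-below-average argument together with Proposition~\ref{prop:cf}, the first inequality follows by telescoping, and the second by the same elementary comparison of falling factorials with powers. The only differences are presentational (you isolate the one-step ratio and factor the binomial quotient term by term, and you flag the degenerate case $f(S_{i+1})=0$, which the paper implicitly handles by keeping the inequality in multiplicative form).
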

\begin{proof}
   For every integer $h\in[k+1,n]$, we have
    \[f(S_{h-1})=f(S_h)-\min_{v\in S_h} f(v|S_h\setminus\{v\}) \ge f(S_h)-c_f\frac{f(S_h)}{h}\ge f(S_h)-r\frac{f(S_h)}{h}=\frac{h-r}{h} f(S_h),\]
  where the first and second inequalities follow from (\ref{eq:cf}) and Proposition \ref{prop:cf}, respectively. Applying the inequality $f(S_{h-1})\ge\frac{h-r}{h} f(S_h)$ repeatedly yields
    \begin{align*}
        f(S_i)&\ge \frac{(j-r)\cdot (j-r-1)\cdots (i-r+1)}{j\cdot (j-1)\cdots (i+1)}\cdot f(S_j)\\
        &=\frac{(j-r)!/(i-r)!}{j!/i!}\cdot f(S_j)\\
        &=\frac{i!/(i-r)!}{j!/(j-r)!}\cdot f(S_j)
        \\
        &=\frac{ \binom{i}{r}}{\binom{j}{r}}
        \cdot f(S_j)
    \end{align*}
On the other hand, it follows from $i>r$ that $\frac{i(i-1)\cdots(i-r+1)}{r!}\ge(\frac{i}{r})^r$ and $\binom{i}{r}/\binom{j}{r}\ge  \frac{i\cdot (i-1)\cdots (i-r+1)}{j^{r}}\ge \frac{r!}{r^{r}}\cdot \frac{i^{r}}{j^{r}}$, proving the lemma.
\end{proof}

\begin{theorem}\label{thm:peeling}
    Algorithm \ref{alg:pmkss} runs in $O(n^2)$ time, and achieves an approximation ratio of  $O(n^{r-1}/k^{r-1})$ for the $k$SPM problem on $\mathscr F_r$, provided $k>r$.
\end{theorem}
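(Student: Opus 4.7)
The plan is to bound the runtime routinely and to derive the approximation ratio by combining Lemma~\ref{lma:pdk} with a density guarantee on the peeling trajectory that is already implicit in Theorem~\ref{thm:dkpss}.

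For the runtime, Algorithm~\ref{alg:pmkss} performs $n-k$ iterations; the $i$-th iteration computes the $i$ marginals $f(v\mid S_i\setminus\{v\})$ for $v\in S_i$ and then picks a minimum, costing $O(i)$ oracle calls and time. Summation yields $\sum_{i=k+1}^{n} O(i)=O(n^2)$.

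For the approximation ratio, I would fix an optimal $k$SPM solution $S^*$ and let $\rho^*:=\max_{|T|\ge k}f(T)/|T|$ denote the optimum density of the D$\overline{k}$SS problem on $f$. Since $S^*$ is itself a $\ge k$-subset, $\rho^*\ge f(S^*)/k$. The trajectory $S_n,S_{n-1},\ldots,S_k$ of Algorithm~\ref{alg:pmkss} coincides with the greedy-peeling trajectory underlying the $(r+1)$-approximation in Theorem~\ref{thm:dkpss} (whose algorithm differs only in its output rule, returning the $S_i$ of highest density encountered). Consequently there exists $i^*\in\{k,k+1,\ldots,n\}$ with $f(S_{i^*})/i^*\ge \rho^*/(r+1)\ge f(S^*)/((r+1)k)$. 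Applying Lemma~\ref{lma:pdk} to the pair $(k,i^*)$ converts this into a bound on $f(S_k)$:
\[
f(S_k)\ \ge\ \frac{r!}{r^r}\Bigl(\frac{k}{i^*}\Bigr)^{r} f(S_{i^*})\ \ge\ \frac{r!}{(r+1)r^r}\cdot\frac{k^{r-1}}{(i^*)^{r-1}}\cdot f(S^*)\ \ge\ \frac{r!}{(r+1)r^r}\cdot\frac{k^{r-1}}{n^{r-1}}\cdot f(S^*),
\]
using $i^*\le n$ at the end. For constant $r$, this is the claimed $O(n^{r-1}/k^{r-1})$-approximation.

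The only step I expect to be non-routine is the density guarantee on the trajectory. Without it, one only has the crude inequality $f(S_k)\ge\frac{r!}{r^r}(k/n)^r f(V)\ge\frac{r!}{r^r}(k/n)^r f(S^*)$ (by Lemma~\ref{lma:pdk} with $j=n$ together with monotonicity of $f$), which yields the weaker ratio $O(n^r/k^r)$; shaving the extra factor of $n/k$ really does require the sharper fact that the peeling sequence passes through a set whose density is within an $(r+1)$-factor of $\rho^*$.
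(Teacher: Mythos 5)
Your runtime analysis is fine and matches the paper's. For the approximation ratio, however, your route is genuinely different from the paper's, and it has a real gap.

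The paper proves a density-type guarantee on the peeling trajectory from scratch: it builds a shrinking sequence $S'_k\supseteq\cdots\supseteq S'_0$ inside $S_{opt}$ by discarding elements with marginal at most $\opt/(2k)$, so that $f(S'_0)\ge\opt/2$ while every element of $S'_0$ has marginal $>\opt/(2k)$; it then splits into two cases (either $S'_0\subseteq S_k$, giving $f(S_k)\ge\opt/2$ directly, or some $S_t$ in the trajectory has all marginals $\ge\opt/(2k)$, giving $f(S_t)/t\ge\opt/(2rk)$ via Proposition~\ref{prop:cf}), and only then invokes Lemma~\ref{lma:pdk}. You instead want to extract the density guarantee ``there is $i^*\ge k$ with $f(S_{i^*})/i^*\ge\opt/((r+1)k)$'' directly from Theorem~\ref{thm:dkpss}, by asserting that the Chekuri et al.\ $(r+1)$-approximation for D$\overline{k}$SS is ``the greedy-peeling trajectory, differing only in its output rule,'' and that this trajectory coincides with Algorithm~\ref{alg:pmkss}'s.

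That assertion is the gap. Theorem~\ref{thm:dkpss} is quoted in this paper purely as a black box --- the paper never describes what the underlying algorithm does, only uses its output $\bar V$ in Procedure~\ref{alg:pre}. Nothing here licenses the claim that its trajectory is the same as Algorithm~\ref{alg:pmkss}'s (and ``coincides'' is in any case too strong, since the $\argmin$ tie-breaking may differ; what you actually need is that \emph{every} min-marginal peeling trajectory passes through a set of density at least $\rho^*/(r+1)$, which is a statement about all trajectories, not one). If you instead try to prove the density guarantee directly, the clean argument (take the densest $\overline k$-subset $T^*$, look at the first peeling step that removes a vertex of $T^*$, bound its marginal below by $\rho^*$ using optimality of $T^*$, and above by $r f(S_i)/i$ using Proposition~\ref{prop:cf}) works only when $|T^*|>k$; when $|T^*|=k$ the lower bound $f(v\mid T^*\setminus\{v\})\ge\rho^*$ fails because removing $v$ leaves an infeasible set, and indeed an optimal $k$-subset may contain vertices with arbitrarily small marginals. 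Handling exactly this case is why the paper introduces the shrinking sequence $S'_k,\ldots,S'_0$. So the citation does not sidestep the paper's core argument; it hides it. To repair the proof you would either need to open up the Chekuri et al.\ algorithm and verify it really is min-marginal peeling with a best-prefix output rule, or reproduce the shrinking-sequence argument (or an equivalent) to establish the trajectory density guarantee yourself. Once that guarantee is in hand, your use of Lemma~\ref{lma:pdk} to amplify it to a bound on $f(S_k)$ is correct.
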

\begin{proof} The running time is obvious. To prove the approximation ratio, let $f\in\mathscr F_r$ and $S_{opt}$ be an optimal solution of the problem and $\opt:=f(S_{opt})$. We construct a sequence of shrinking sets $S_k',S_{k-1}',\ldots,S'_0$ in this order. Initially set $S'_k:=S_{opt}$. Iteratively, for each $h=k,k-1,\ldots,1$, given $S'_h$, if there is $v'_h\in S_h'$ with $f(v_h'|S'_h\setminus\{v_h'\})\le \opt/(2k)$, then let $S'_{h-1} := S'_h \setminus\{v_h'\}$; otherwise, let $S_{h-1}' := S'_h$. So we have $S_{opt} = S'_k \supseteq S'_{k-1} \supseteq \cdots \supseteq S'_0$, $f(S'_0)\ge \opt-k\cdot \opt/(2k)=\opt/2$ and $f(v|S'_0\setminus\{v\})>\opt/(2k)$ for each $v\in S'_0$. 
    
    First, we consider the case that $f(v_i|S_i\setminus\{v_i\})\le \opt/(2k)$ for each $i\in [k+1,n]$. We claim that $S'_0\subseteq S_k$. Otherwise, there exists $i\in [k+1,n]$ such that $S'_0\subseteq S_i$ and $v_i\in S'_0$. However, since $f$ is supermodular, $f(v_i|S_i\setminus\{v_i\})\ge f(v_i|S'_0\setminus\{v_i\})>\opt/(2k)$, a contradiction. Moreover, since $f$ is monotone, we have
    \[\frac{\opt}{f(S_k)}\le \frac{\opt}{f(S'_0)}\le 2.\]

    Now, the remaining case is that there exists $t\in[k+1,n]$ such that $f(v|S_t\setminus\{v\})\ge \opt/(2k)$ for each $v\in S_t$. Then 
    by Lemma \ref{lma:pdk} and Proposition~\ref{prop:cf}, we have
    \begin{align*}
        f(S_k)&\ge \frac{r!}{r^{r}}\cdot \frac{k^{r}}{t^{r}}\cdot f(S_t)\\
        &\ge \frac{r!}{r^{r}}\cdot \frac{k^{r}}{t^{r}}\cdot \frac{\sum_{v\in S_t}f(v|S_t\setminus\{v\})}{r}\\
        &\ge \frac{r!}{r^{r}}\cdot \frac{k^{r}}{t^{r}}\cdot \frac{t\cdot \opt/(2k)}{r}
    \end{align*}
 and
    \[\frac{\opt}{f(S_k)}\le \frac{r^{r}\cdot 2r}{r!}\cdot \frac{t^{r-1}}{k^{r-1}} =O(n^{r-1}/k^{r-1}),\]
establishing the theorem.
\end{proof}

\subsection{Batch-greedy augmenting} \label{sec:augment}
Unlike its high efficiency in maximizing submodular functions, the classical greedy augmenting algorithm--starting from the empty set and greedily adding elements one by one--typically performs poorly for supermodular maximization (see Appendix~\ref{sec:bai} for examples). We show, however, that for $r$-decomposable functions, a batch-greedy variant that adds $r$ elements at a time (while respecting the cardinality constraint) achieves a provably good approximation.

\smallskip

\begin{algorithm}[H]
    \caption{Batch-greedy augmenting for $k$-subsets}\label{alg:chmkss2}
    \KwIn{Supermodular function $f:2^V\rightarrow\mathbb R_+$, integers $r$ and $k$ such that $1\le r\le k\le n-1$}
    $T_0\gets\emptyset$\;
    $t\gets\lfloor k/r\rfloor$\;
    \For{$i\gets1$ \KwTo $t$}{
        $\mathcal{R}_i\gets$ the set of $r$-subsets of $V\setminus T_{i-1}$\;
        $S_i\gets$ an $r$-subset in $\argmax_{S\in \mathcal{R}_i} f(T_{h-1}\cup S)$\;
        $T_i\gets T_{i-1}\cup S_i$\;
    }
    \textbf{Output}   $\textsc{Alg\ref{alg:chmkss2}}(f,r,k):=$ any $k$-subset of $V$ that contains $T_t$  
 
\end{algorithm}

\begin{lemma}\label{lma:chmkss2}
    Let $U_1,U_2,\dots,U_t$ be any $t$  pairwise disjoint $r$-subsets of $ V$. If $f$ is monotone, then $f(T_t)\ge \frac12\sum_{h=1}^{t} f(U_h)$.
\end{lemma}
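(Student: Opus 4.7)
The proof plan is by induction on $t$, with the greedy optimality of each $S_i$ as the main lever. The key step-wise inequality is that, since $S_i$ maximizes $f(T_{i-1}\cup S)$ over $r$-subsets $S\subseteq V\setminus T_{i-1}$, applying supermodularity to the disjoint pair $(T_{i-1},U)$ together with $f(\emptyset)=0$ yields, for every $r$-subset $U\subseteq V\setminus T_{i-1}$,
\[g_i := f(T_i)-f(T_{i-1}) \;\ge\; f(T_{i-1}\cup U)-f(T_{i-1}) \;\ge\; f(U).\]
In particular $g_i \ge f(U_h)$ whenever $U_h\cap T_{i-1}=\emptyset$; the base case $t=1$ follows at once from $f(T_1)=f(S_1)\ge f(U_1)$. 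To make the induction unfold smoothly I would strengthen the hypothesis to: for every $s\le t$ and every $s$ pairwise disjoint $r$-subsets $U_1,\dots,U_s$ of $V$, the batch-greedy output $T_t$ satisfies $f(T_t)\ge\tfrac12\sum_{h=1}^s f(U_h)$.

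For the inductive step, let $k:=|\{h : U_h\cap S_1\ne\emptyset\}|\le\min(r,t)$ and reindex so that $U_1,\dots,U_k$ are touched by $S_1$. The conditioned function $f'(\cdot):=f(T_1\cup\cdot)-f(T_1)$ is nonnegative, monotone, supermodular, and normalized on $V\setminus T_1$, and on that ground set the algorithm performs batch-greedy with $f'$ for $t-1$ further rounds. Applying the inductive hypothesis to $f'$ and the $t-k$ untouched sets $U_{k+1},\dots,U_t\subseteq V\setminus T_1$, combined with the supermodular pointwise bound $f'(U_h)\ge f(U_h)$ for $h>k$, gives $f(T_t)-f(T_1)\ge\tfrac12\sum_{h=k+1}^{t}f(U_h)$. (When $k=0$, apply the hypothesis instead to any $t-1$ of the $U_h$'s; the single dropped one is absorbed by $f(T_1)\ge f(U_h)$.) Combined with $f(T_1)\ge \max_h f(U_h)$ from step~1, the induction closes provided the \emph{balancing inequality}
\[f(T_1) \;\ge\; \tfrac12\sum_{h=1}^{k} f(U_h) \qquad(\star)\]
holds. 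The cases $k\le 2$ are immediate since $f(T_1)\ge\max_{h\in[k]}f(U_h)\ge\tfrac{1}{k}\sum_{h\in[k]}f(U_h)\ge\tfrac12\sum_{h\in[k]}f(U_h)$.

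The main obstacle is the remaining case $k\ge 3$, where $f(T_1)\ge\max$ alone only furnishes $\tfrac{1}{k}\sum_{h\in[k]}f(U_h)\le\tfrac13\sum_{h\in[k]}f(U_h)$, short of $(\star)$. To bridge the gap I would amortize the shortfall against the later gains $g_2,g_3,\dots$: each touched $U_h$ has $|U_h\setminus T_1|\le r-1<r$, so at some later step $i\ge 2$ greedy can choose an $r$-subset that contains $U_h\setminus T_{i-1}$, producing, by supermodularity and the greedy inequality, an extra marginal of at least $f(U_h)-f(T_{i-1}\cap U_h)$. Routing one such ``recovery'' marginal per touched $U_h$ into $\sum_{i\ge 2} g_i$ and controlling the overlaps $f(T_{i-1}\cap U_h)$ via monotonicity and the nested structure $T_1\subseteq T_2\subseteq\cdots$ supplies the missing $\tfrac12\sum_{h=1}^k f(U_h)-f(T_1)$ and closes the induction. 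The subtle accounting, ensuring that each later round absorbs no more than its fair share of recovery marginals without double counting against the inductive contribution from $U_{k+1},\dots,U_t$, is the delicate part of the argument.
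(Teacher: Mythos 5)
Your approach diverges genuinely from the paper's, and it has a real gap that you yourself flag. The paper does \emph{not} condition on $T_1$ and recurse; instead, after sorting so that $f(U_1)\ge\cdots\ge f(U_t)$, it inducts on the round index $i$ with the invariant $f(T_i)\ge\tfrac12\sum_{h=1}^{i}f(U_h)$, and in each inductive step it splits on whether some $U_g$ with $g\le i$ satisfies $f(U_g\cap T_{i-1})\le f(U_g)/2$. If yes, greedy has available an $r$-subset $S$ with $U_g\subseteq T_{i-1}\cup S$, and supermodularity plus monotonicity give a one-round gain of at least $f(U_g)/2\ge f(U_i)/2$, which closes the step. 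If no, then every $U_h$ with $h\le i$ is already at least half-captured by $T_{i-1}$, and disjointness plus supermodularity plus normalization give $\sum_{h\le i}f(U_h\cap T_{i-1})\le f(T_{i-1})$, so $f(T_i)\ge f(T_{i-1})\ge\tfrac12\sum_{h\le i}f(U_h)$ outright. Crucially, this never demands that any particular $U_h$ be fully absorbed in any single round; partial captures $U_h\cap T_{i-1}$ carry the accounting.

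The gap in your proposal is exactly where you locate it: the balancing inequality $(\star)$, namely $f(T_1)\ge\tfrac12\sum_{h=1}^{k}f(U_h)$, is false in general for $k\ge3$. All you can extract from greedy optimality of $S_1$ is $f(T_1)\ge\max_{h}f(U_h)$, which gives only $\tfrac1k\sum_{h\in[k]}f(U_h)$. A concrete obstruction: with $r=3$, take $U_1,U_2,U_3$ mutually disjoint with $f(U_1)=f(U_2)=f(U_3)=1$, and let $S_1=\{a,b,c\}$ with one element in each $U_h$ and $f(S_1)=1$ (so $S_1$ ties for the maximum over $r$-subsets and greedy may legitimately pick it); then $k=3$ and $f(T_1)=1<\tfrac32$. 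Your amortization idea, that each touched $U_h$ can be ``recovered'' at some later round, is where the proof must actually live, but you leave it as a plan. The double-counting worry you raise is real: the later rounds' marginal gains $g_i$ are already being spent, via the inductive hypothesis applied to $f'$, on the untouched sets $U_{k+1},\dots,U_t$, so splitting those same $g_i$ into recovery credit for $U_1,\dots,U_k$ requires a precise charging scheme that you do not supply and that is not obviously available. As written, the inductive step does not close for $k\ge3$, so the proof is incomplete. If you want to keep the conditioning structure, you would need to strengthen the inductive invariant to carry per-set overlap information forward (essentially reinventing the paper's $f(U_h\cap T_{i-1})$ bookkeeping); otherwise the cleaner route is the paper's two-case induction, which sidesteps $(\star)$ entirely.
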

\begin{proof} Suppose without loss of generality that $f(U_1)\ge \dots \ge f(U_t)$. We prove  $f(T_i)\ge \frac12\sum_{h=1}^{i} f(U_h)$ by induction on $i=1,2,\ldots,t$. The inequality trivially holds when $i=1$ because $U_1\in \mathcal{R}_1$ and $f(T_1)=f(S_1)\ge f(U_1)$. Proceeding inductively, we assume that $i\ge2$ and $f(T_{i-1})\ge \frac12\sum_{h=1}^{i-1} f(U_h)$. 

First we consider the case in which  $f(U_g\cap T_{i-1})\le f(U_g)/2$ for some $g\le i$. Clearly, there exists $S\in \mathcal{R}_i$ such that $U_g\subseteq T_{i-1}\cup S$. The supermodularity of $f$ implies $f(U_g\cup S)+f(T_{i-1})\le f(T_{i-1}\cup S)+f(T_{i-1}\cap U_g)$.  In turn, the definition of $T_i$ gives $f(T_i)-f(T_{i-1})\ge f(T_{i-1}\cup S)-f(T_{i-1})\ge f(U_g\cup S)-f(T_{i-1}\cap U_g)$. Since $f$ is monotone, we have
    \[f(T_i)-f(T_{i-1})\ge f(U_g)-f(T_{i-1}\cap U_g)\ge f(U_g)-f(U_g)/2=f(U_g)/2.\]
    By the induction hypothesis and $f(U_g)\ge f(U_i)$, we obtain
    \[f(T_i)= f(T_{i-1})+(f(T_i)-f(T_{i-1}))\ge \frac{\sum_{h=1}^{i-1} f(U_h)}{2}+\frac{f(U_g)}{2}\ge \frac{\sum_{h=1}^{i} f(U_h)}{2}.\] 
  
  It remains to consider the case in which $f(U_h\cap T_{i-1})\ge f(U_h)/2$ for all $h\le i$. Since $U_1,\ldots,U_i$ are mutually disjoint and $f(\emptyset)=0$, it follows from the supermodularity and monotonicity of $f$ that $ \sum_{h=1}^i f(U_h\cap T_{i-1})\le f((\cup_{h=1}^iU_h)\cap T_{i-1})\le f(T_{i-1})$. Furthermore,
    \[f(T_i)\ge f(T_{i-1})\ge \sum_{h=1}^i f(U_h\cap T_{i-1})\ge \frac{\sum_{h=1}^{i} f(U_h)}{2},\]
which completes the proof of the lemma.
\end{proof}

\begin{theorem}\label{thm:add}
    Algorithm \ref{alg:chmkss2} runs in  $O(n^{r+1})$ time, and achieves an approximation ratio of  $O(k^{r-1})$ for the $k$SPM  problem on $\mathscr F_r$, provided $k>r$.   
\end{theorem}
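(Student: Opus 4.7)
The plan is to bound the runtime directly, and then for the approximation ratio to exhibit a family of pairwise disjoint $r$-subsets $U_1,\ldots,U_t$ whose total $f$-value is comparable to $\opt$, so that Lemma~\ref{lma:chmkss2} can be invoked. For the runtime, each of the $t=\lfloor k/r\rfloor\le n$ iterations enumerates at most $\binom{n}{r}=O(n^r)$ candidate $r$-subsets and evaluates the oracle $f$ on each, giving an overall running time of $O(n^{r+1})$ (since $r$ is constant).

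For the approximation ratio, let $S_\opt$ be an optimal $k$-subset with $f(S_\opt)=\opt$. I would consider a uniformly random partition of $S_\opt$ into $t$ blocks $U_1,\ldots,U_t$ of size $r$ (discarding the at most $r-1$ leftover elements), sampled by permuting $S_\opt$ uniformly and grouping consecutive $r$-tuples. Each block $U_h$ is marginally a uniformly random $r$-subset of $S_\opt$, so by linearity of expectation
\[\mathbb{E}\!\left[\sum_{h=1}^t f(U_h)\right]=\frac{t}{\binom{k}{r}}\sum_{\substack{U\subseteq S_\opt\\|U|=r}} f(U).\]
Once the key inequality $\sum_{U\subseteq S_\opt,\,|U|=r} f(U)\ge \opt$ is established, some realization satisfies $\sum_h f(U_h)\ge t\opt/\binom{k}{r}=\Omega(\opt/k^{r-1})$, since $t=\Theta(k)$ and $\binom{k}{r}=\Theta(k^r)$ for constant $r$. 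Applying Lemma~\ref{lma:chmkss2} to this realization (whose blocks are pairwise disjoint $r$-subsets of $V$) yields $f(T_t)\ge \opt/O(k^{r-1})$, and the monotonicity of $f$ propagates the bound to any $k$-superset of $T_t$, which is precisely the algorithm's output.

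The main obstacle I expect is the key inequality $\sum_{U\subseteq S_\opt,\,|U|=r} f(U)\ge f(S_\opt)$; this is the single place where $r$-decomposability is essential. The plan is to expand $f(U)=\sum_{i=1}^m f_i(U\cap V_i)$ and handle each component $i$ separately. Setting $W_i:=V_i\cap S_\opt$ with $s_i=|W_i|\le |V_i|\le r\le k$, any $U\subseteq S_\opt$ of size $r$ satisfies $U\cap V_i=U\cap W_i$, and the number of such $U$ containing $W_i$ is $\binom{k-s_i}{r-s_i}\ge 1$. Using $f_i\ge 0$ to drop all other terms in $\sum_U f_i(U\cap V_i)$ gives $\sum_{U\subseteq S_\opt,\,|U|=r} f_i(U\cap V_i)\ge f_i(W_i)$. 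Summing over $i$ then yields $\sum_U f(U)\ge \sum_i f_i(W_i)=f(S_\opt)$, closing the argument.
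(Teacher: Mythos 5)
Your proposal is correct and follows essentially the same route as the paper's proof. Both arguments rest on the same key inequality $\sum_{U\subseteq S_{\mathrm{opt}},\,|U|=r} f(U)\ge\opt$, established identically via the $r$-decomposition and nonnegativity of the $f_i$'s (your count $\binom{k-s_i}{r-s_i}\ge 1$ is the quantitative version of the paper's observation that $V_i\cap S_{\mathrm{opt}}$ is the intersection of $V_i$ with some $r$-subset of $S_{\mathrm{opt}}$); and both then invoke an averaging step to extract a family of $t$ pairwise disjoint $r$-subsets whose total $f$-value is at least $t\opt/\binom{k}{r}$, feeding into Lemma~\ref{lma:chmkss2}. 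The only cosmetic difference is that you phrase the averaging probabilistically via a random permutation of $S_{\mathrm{opt}}$, while the paper double-counts over the set $\mathcal U$ of ordered tuples of disjoint $r$-subsets; these yield the same bound $f(T_t)\ge t\opt/(2\binom{k}{r})=\Omega(\opt/k^{r-1})$.
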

\begin{proof} Note that each $\mathcal R_i$ in the algorithm is of size $O(n^r)$ and can be constructed in $O(n^r)$ time. It follows that the algorithm runs in $O(n^{r+1})$ time. 

To prove the approximation ratio, suppose that the $r$-decomposition of $f\in\mathscr F_r$ is   $(V_i,f_i)_{i=1}^m$. (This decomposition is not an input to the algorithm; it is only used for the proof.)   Let $S_{opt}$ be an optimal solution of the problem and $\opt:=f(S_{opt})$. 
For each $i=1,\ldots,m$, because $|V_i|\le r$,  we see that $V_i\cap S_{opt}$ is an $\underline{r}$-subset of both $V_i$ and $S_{opt}$, and therefore  
the intersection of $V_i$ and some $r$-subset of $S_{opt}$.  Let $\mathcal R$ consist of all $r$-subsets of $S_{opt}$.  It follows from $f$'s decomposability and $f_i$'s nonnegativity 
that
    \[\opt=\sum_{i=1}^m f_i(V_i\cap S_{opt})\le \sum_{i=1}^m\sum_{S\in \mathcal{R}} f_i(V_i\cap S)=\sum_{S\in \mathcal{R}}\sum_{i=1}^m f_i(V_i\cap S)=\sum_{S\in \mathcal{R}}f(S).\]
  Let $\mathcal U$ be the set of vectors $(U_1,\ldots,U_t)$ with $U_1,\ldots,U_t$ being mutually disjoint $r$-subsets of $S_{opt}$. It is instant from Lemma \ref{lma:chmkss2} that $\sum_{(U_1,\dots,U_t)\in \mathcal{U} }\sum_{i=1}^t f(U_i)\le 2f(T_t)\cdot|\mathcal U|$. On the other hand, by symmetry, the number of appearances of an $r$-subset in $\mathcal R$ as an element of a vector in $\mathcal U$ is the same value $t|\mathcal U|/|\mathcal R|$, which gives $\sum_{(U_1,\dots,U_t)\in \mathcal{U} }\sum_{i=1}^t f(U_i)=(t|\mathcal{U}|/|\mathcal{R}|)\cdot \sum_{S\in \mathcal{R}}f(S)$. Hence
   $\sum_{S\in \mathcal{R}}f(S)\le 2f(T_t)|\mathcal{R}|/t=2f(T_t)\binom{k}{r}/t\le 2f(T_t)k^r/t$. Since $t= \lfloor k/r\rfloor\ge (k-r+1)/r$, we have
    \[\opt \le \sum_{S\in \mathcal{R}}f(S)\le 2r\cdot \frac{k^r}{k-r+1} \cdot f(T_t)=O(k^{r-1})\cdot f(T_t).\]
 \end{proof}

\subsection{Combining}
The combination of Theorems~\ref{thm:peeling} and \ref{thm:add} gives the following main result of this section. Taking the better solution output by Algorithms \ref{alg:pmkss} and \ref{alg:chmkss2} (when $k>r$), and using brute-force search to find the optimum (in case of $k\le r$), we can guarantee an approximation ratio of $O(\min\{k^{r-1},n^{r-1}/k^{r-1}\})$, where $\min\{k^{r-1},n^{r-1}/k^{r-1}\}\le n^{(r-1)/2}$. 
\begin{theorem}\label{thm:kspm}
    There is an  $O(n^{(r-1)/2})$-approximation algorithm for the $k$SPM and $\underline{k}$SPM problems on $\mathscr F_r$, which runs in $O(n^{r+1})$ time. 
\end{theorem}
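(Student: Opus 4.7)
The plan is to combine Theorems~\ref{thm:peeling} and \ref{thm:add} via a standard ``run-both-and-pick-the-better'' scheme, together with brute-force enumeration for the small-$k$ regime excluded by both component theorems. Since $r$ is a fixed constant, when $k \le r$ one can simply enumerate all $\binom{n}{k} = O(n^r)$ subsets of size $k$ and evaluate $f$ on each, returning the exact optimum in $O(n^{r+1})$ total time. This handles the degenerate cases without affecting the asymptotic runtime or the stated approximation ratio.

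For the main regime $k > r$, I would execute both Algorithm~\ref{alg:pmkss} and Algorithm~\ref{alg:chmkss2} on the input and output whichever of the two produced $k$-subsets attains the larger $f$-value. By Theorem~\ref{thm:peeling}, the peeling output has approximation ratio $O(n^{r-1}/k^{r-1})$, and by Theorem~\ref{thm:add}, the batch-greedy output has ratio $O(k^{r-1})$, so the better of the two achieves a ratio of $O(\min\{n^{r-1}/k^{r-1},\,k^{r-1}\})$. A routine case split shows this minimum is at most $n^{(r-1)/2}$ for every $k$: if $k \le n^{1/2}$ then $k^{r-1} \le n^{(r-1)/2}$ and we invoke the batch-greedy bound, while if $k \ge n^{1/2}$ then $n^{r-1}/k^{r-1} \le n^{(r-1)/2}$ and we invoke the peeling bound. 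The worst case is attained precisely at the balance point $k \approx n^{1/2}$, where both bounds equal $\Theta(n^{(r-1)/2})$.

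The total running time is the sum of Algorithm~\ref{alg:pmkss} at $O(n^2)$, Algorithm~\ref{alg:chmkss2} at $O(n^{r+1})$, and (when applicable) the brute-force step at $O(n^{r+1})$, giving $O(n^{r+1})$ overall. The \tkm SPM version follows with no extra work from the monotonicity of $f\in\mathscr F_r$: as noted in Section~\ref{sec:intr}, when $f$ is monotone the \tkm SPM and \tk SPM problems are equivalent (they share the same optimum value, and any optimal $k$-subset is also an optimal \tkm-subset), so the identical algorithm and analysis transfer verbatim. Since all the substantive work has already been carried out by the two component theorems, there is no genuine obstacle in this final step; the only thing to verify is the elementary inequality $\min\{a,1/a\}\le 1$ applied to $a = k^{r-1}/n^{(r-1)/2}$.
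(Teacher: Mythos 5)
Your proposal is correct and matches the paper's own argument essentially line for line: run Algorithms~\ref{alg:pmkss} and~\ref{alg:chmkss2} and keep the better output when $k>r$, use $O(n^r)$ brute-force enumeration when $k\le r$, observe that $\min\{k^{r-1},n^{r-1}/k^{r-1}\}\le n^{(r-1)/2}$, and invoke the equivalence of \tk SPM and \tkm SPM for the monotone functions in $\mathscr F_r$. Nothing is missing and no different route is taken.
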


\section{Approximation under cardinality and connectivity constraints}\label{section:mckss}
In this section, we design two approximation algorithms (Algorithm \ref{alg:star} and \ref{alg:overall}) for the connected \tkm-subset supermodular maximization (C\tkm SPM) problem on $\mathscr F_r$, achieving approximation ratios $O(k^{r-1})$ and $O(n^{r-1}/k^{r-1})$, respectively. Combining these two algorithms, we derive an $O(n^{(r-1)/2})$-approximation for the C\tkm SPM problem. These results extend to the C\tk SPM problem due to their ``indirect equivalence'', as discussed in Section \ref{sec:intr}.

Given function $f:2^V\rightarrow\mathbb R_+$ in $\mathscr F_r$, without loss of generality, we assume $f$ is normalized (see Appendix \ref{sec:norm}). Unlike the \tk SPM (\tkm SPM) counterpart, which only require the value of $r$ during our algorithmic solving process (i.e., the greedy augmenting subroutine), the C\tkm SPM setting explicitly provides the $r$-decomposition $(V_i,f_i)_{i=1}^m$ of $f$ as an input, where every $V_i$ is an $\underline{r}$-subset of $V$, every $f_i:2^{V_i}\rightarrow\mathbb{R}_+$ is supermodular, and the following \emph{decomposition identities} hold:
\begin{center}
 $f(S)=\sum_{i=1}^mf_i(S\cap V_i)$ for each $S\subseteq V$.
\end{center} 
For brevity, we denote the input of the C\tkm SPM problem by $f\in\mathscr F_{r}^V$, which specifies the ground set $V$ and implicitly provides the corresponding $r$-decomposition.

\paragraph{Underlying graph.} This decomposition is used to define the connectivity constraint of the C\tkm SPM problem. Specifically, $(V_i)_{i=1}^m$ yields an undirected graph $G=(V,E)$ with vertex set $V$ and edge set $E=\cup_{i=1}^m \{uv:\{u,v\}\subseteq V_i,u\neq v\}$. The objective of the C\tkm SPM is to find a connected \tkm-subset $S\subseteq V$ such that $f(S)$ is maximized, where ``connected'' means that $G[S]$, the subgraph of $G$ induced by $S$, is connected. 

We only need to consider the case where \emph{$V$ is connected}. This, in particular, implies that $r \ge 2$ and there is a $k$-subset that is optimal for the C\tkm SPM problem. 
If the graph $G$ is not connected, we can reduce the problem by considering each connected component separately: solving C$\underline{l}$SPM for the vertex set $U$ of each connected component of $G$, where $l=\min\{k,|U|\}$, and selecting the best solution as the overall output for the C\tkm SPM on $V$. Note that this reduction does not change the approximation ratio.
 
 Given that the ground set $V$ of the function $f$ coincides with the vertex set of the graph $G$, we shall \emph{use the terms ``element'' and ``vertex'' interchangeably}. 
 
 \paragraph{Decomposition constituents.} Notice from $0=f(\emptyset)=\sum_{i=1}^mf_i(\emptyset)$ that all $f_i:2^{V_i}\rightarrow\mathbb R_+$ are normalized. So $f_i(v|\emptyset)=f_i(v)\ge0$ holds for all $v\in V_i$. In turn, $f_i$'s supermodularity implies that it is monotone, since $f_i(v|S)\ge f_i(v|\emptyset)\ge0$ for any $S\subset V_i$ and any $v\in V_i\setminus S$. Thus
\begin{itemize}
    \item for each $i\in[m]$, $|V_i|\le r$ and  $f_i:2^{V_i}\rightarrow\mathbb{R}_+$ is nonnegative, supermodular, normalized and monotone.
\end{itemize}
 By $f$'s decomposability, we have $f(S)=\sum_{i=1}^mf_i(S\cap V_i)=f(S\cap (\cup_{i=1}^m V_i))$ for every $S\subseteq V$. Therefore, for notational convenience, we may assume without loss of generality that 
\begin{itemize}
    \item $V=\cup_{i=1}^m V_i$.
\end{itemize}

\paragraph{Algorithm overview.}
Before presenting the the technical details, we briefly outline our approach for finding a high-valued connected $\underline{k}$-subset. In Section~\ref{sec:attach}, we construct candidate \tkm-subsets by ``attaching'' elements to a common connected core, which takes one of two forms: either a single vertex serving as a star center (Section~\ref{sec:star}) or a connected  $\lceil (r-1)k/r \rceil$-subset functioning as a sunflower seed disc (Section~\ref{sec:sunflower}).
In Section~\ref{sec:remove}, we remove so-called removable elements from $V$ to ``lift'' both the density and marginal values of elements, and we further drop redundant elements whenever an entire maximal connected $\overline{k}$-subset survives. This process always maintains a set with at least $k$ elements, but its connectivity may be compromised. If the resulting set $R$ remains connected, then the sunflower attachment procedure, developed in Section~\ref{sec:sunflower}, applies to construct a \tkm-subset of $R$, yielding an $O(n^{r-1}/k^{r-1})$-approximation for the C\tkm SPM problem (see Lemma \ref{lem:rconnect}). If $R$ is not connected and contains no connected $\overline{k}$-subsets, we proceed to merging step in Section~\ref{sec:merge}. Here, we merge ``small'' maximal connected subsets of $R$ to form larger connected \tkm-subsets of $V$. We then select the subset with the highest value from among these newly formed subsets and existing ``big'' maximal connected subsets of $R$. Such a maximum-valued subset gives an $O(n/k)$-approximation of the C\tkm SPM problem (see Lemma \ref{lma:merge}).  
  Finally, in Section~\ref{integrate}, we integrate the removing, attaching, and merging procedures to derive an $O(n^{r-1}/k^{r-1})$-approximation in $O(mn^2)$ time. Combined with the $O(k^{r-1})$-approximation from the best star attachments (Section~\ref{sec:star}), this yields an overall $O(n^{(r-1)/2})$-approximation for the C\tkm SPM problem (see Theorem~\ref{thm:ckspm}).

\subsection{Concentric attachments}\label{sec:attach}
We say that two disjoint subsets of $V$ are \emph{connected to} each other if there exists a vertex in each of these two subsets such that the two vertices are neighbors in $G$. To ensure the connectivity of the \tkm-subset we construct, the most straightforward idea is to attach some elements to a connected subset at hand (referred to as the \emph{core}), where each element is connected to this core. In this way, the resulting \tkm-subset will be connected. For this purpose, we need to define a function  $f^S$ for the core $S$, and determine whether an element is connected to $S$ based on its marginal value with respect to some other set under $f^S$.

 For each nonempty $S\subseteq V$, let $I_S$ denote the set of indices $i\in[m]$ with $V_i\cap S\ne\emptyset$, and let function $f^S:2^{V\setminus S}\rightarrow \mathbb{R}$ be defined by 
\[f^S(T):=\sum_{\begin{subarray}{l}i\in I_S
\end{subarray}} \big(f_i((S\cup T)\cap V_i)-f_i(S\cap V_i)\big) \text{ for all }T\subseteq V\setminus S.\]
It is clear that $f^S$ is nonnegative and monotone. Note from $f$'s decomposition identities and $f_i$'s normality and nonnegativity that
\begin{gather}\label{eq:fs}
f^S(T)
= \left(\sum_{i\in I_S} f_i((S\cup T)\cap V_i)\right)-f(S)\le f(S\cup T)-f(S) \text{ for all }T\subseteq V\setminus S.\end{gather}
The leftmost equation in (\ref{eq:fs}) and the supermodularity of $f_i$ imply that $f^S$ is supermodular. For each $i\in I_S$, let $f^S_i:2^{V_i\setminus S}\rightarrow\mathbb R$ be defined by $f^S_i(T):=f_i((S\cup T)\cap V_i)-f_i(S\cap V_i)$ for all $T\subseteq V_i\setminus S$. Then we obtain the following decomposability for $f^S$.
\begin{lemma}\label{lem:fs}
    For any nonempty $S\subseteq V$, the function $f^S:2^{V\setminus S}\rightarrow \mathbb{R}_+$ is nonnegative, monotone, and supermodular. Moreover, it admits an $(r-1)$-decomposition $(V_i\setminus S,f^S_i)_{i\in I_S}$, where functions $f^S_i$, $i\in I_S$, are nonnegative, monotone and supermodular. \qed
\end{lemma}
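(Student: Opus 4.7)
The proof is essentially a routine verification, so the plan is to work directly from the definitions. First, I would establish the properties of each local function $f_i^S: 2^{V_i\setminus S}\to\mathbb{R}$ separately, then lift everything to $f^S$ by summation.

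For nonnegativity and monotonicity of $f_i^S$, the key observation is that for any $T\subseteq V_i\setminus S$ we have $(S\cup T)\cap V_i=(S\cap V_i)\cup T\supseteq S\cap V_i$, and the map $T\mapsto (S\cup T)\cap V_i$ is monotone in $T$. Combined with monotonicity of $f_i$ (which holds by the paper's earlier remark that each $f_i$ is nonnegative, normalized and supermodular, hence monotone), this immediately gives $f_i^S(T)\ge 0$ and $f_i^S(A)\le f_i^S(B)$ whenever $A\subseteq B\subseteq V_i\setminus S$. For supermodularity of $f_i^S$, I would take arbitrary $A,B\subseteq V_i\setminus S$ and use the identities
\[
(S\cup A)\cup(S\cup B)=S\cup A\cup B,\qquad (S\cup A)\cap(S\cup B)=S\cup(A\cap B),
\]
where the second identity uses $A,B\subseteq V\setminus S$. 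Intersecting with $V_i$ and applying the supermodular inequality for $f_i$ yields $f_i^S(A)+f_i^S(B)\le f_i^S(A\cup B)+f_i^S(A\cap B)$, as the constant $-2f_i(S\cap V_i)$ cancels correctly (recall $A\cap B\subseteq V_i\setminus S$, so $f_i^S(A\cap B)$ is well defined).

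Next I would verify the decomposition identity. Fix $T\subseteq V\setminus S$ and observe that since $T\cap S=\emptyset$, one has $T\cap V_i=T\cap(V_i\setminus S)$ and hence $(S\cup T)\cap V_i=(S\cap V_i)\cup(T\cap(V_i\setminus S))$. Therefore the $i$th summand in the definition of $f^S(T)$ equals $f_i^S(T\cap(V_i\setminus S))$, proving $f^S(T)=\sum_{i\in I_S} f_i^S(T\cap(V_i\setminus S))$. The cardinality bound comes from the defining property of $I_S$: for every $i\in I_S$ we have $V_i\cap S\neq\emptyset$, so $|V_i\setminus S|\le |V_i|-1\le r-1$, giving the claimed $(r-1)$-decomposition.

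Finally, I would deduce the global properties of $f^S$: nonnegativity and monotonicity follow term-by-term from those of the $f_i^S$, and supermodularity follows because the sum of supermodular functions (each pulled back through the intersection $T\mapsto T\cap(V_i\setminus S)$, which preserves unions and intersections) is supermodular. I do not anticipate a serious obstacle here; the only subtlety worth double-checking is that the paper's decomposition sums only over $i\in I_S$ rather than all $i\in[m]$, which is precisely what keeps the component domains of size at most $r-1$.
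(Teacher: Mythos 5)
Your proof is correct and takes essentially the same route as the paper, which treats the lemma as a routine consequence of the definitions (hence the \qed after the statement): establish the local properties of each $f_i^S$ by elementary set identities and $f_i$'s monotonicity and supermodularity, check the decomposition identity $f^S(T)=\sum_{i\in I_S}f_i^S(T\cap(V_i\setminus S))$, bound $|V_i\setminus S|\le r-1$ from $V_i\cap S\ne\emptyset$, and lift the properties to $f^S$ by summation.
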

\begin{lemma}\label{lem:connect}
    For any $T\subseteq V\setminus S$ and   $u\in V\setminus S$, if $f^S(u|T)>0$, then   $\{u\}$ is connected to $S$.
\end{lemma}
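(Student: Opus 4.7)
The plan is to unwind the definitions of $f^S$ and of the edge set $E$ to locate an index $i\in I_S$ whose set $V_i$ contains both $u$ and some element of $S$, which then gives the required edge.

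First I would expand
\[
f^S(u\mid T)=f^S(T\cup\{u\})-f^S(T)=\sum_{i\in I_S}\bigl(f_i((S\cup T\cup\{u\})\cap V_i)-f_i((S\cup T)\cap V_i)\bigr).
\]
Since this sum is assumed to be strictly positive, there must be an index $i^\ast\in I_S$ for which the $i^\ast$-th summand is nonzero. Because $f_{i^\ast}$ depends only on subsets of $V_{i^\ast}$, the two sets $(S\cup T\cup\{u\})\cap V_{i^\ast}$ and $(S\cup T)\cap V_{i^\ast}$ must actually differ. They can differ only in the presence of $u$, so this forces $u\in V_{i^\ast}$. (Monotonicity of $f_{i^\ast}$ already gives nonnegativity of each summand, but we do not need this here; the key point is just that an unchanged argument produces no change in value.)

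Next, by the very definition of $I_S$, the index $i^\ast\in I_S$ satisfies $V_{i^\ast}\cap S\ne\emptyset$, so we can pick some $v\in V_{i^\ast}\cap S$. Since $u\in V\setminus S$ while $v\in S$, we have $u\ne v$. Hence $\{u,v\}\subseteq V_{i^\ast}$ with $u\ne v$, which by the definition $E=\bigcup_{i=1}^m\{uv:\{u,v\}\subseteq V_i,\ u\ne v\}$ yields $uv\in E$. Thus $u$ has a neighbor in $S$, proving that $\{u\}$ is connected to $S$.

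I do not anticipate a serious obstacle: the whole argument is a one‑paragraph bookkeeping unpacking of $f^S$ together with the construction of $G$. The only subtle point worth flagging is that I must use the \emph{leftmost} expression for $f^S$ (the sum over $I_S$ of differences of $f_i$ values), not the upper bound in \eqref{eq:fs}, because only the former makes the per‑coordinate vanishing argument available. Once that is in place, the rest is immediate.
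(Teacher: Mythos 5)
Your proof is correct and follows essentially the same route as the paper's: both expand $f^S(u\mid T)$ as a sum over $I_S$ of differences $f_i((S\cup T\cup\{u\})\cap V_i)-f_i((S\cup T)\cap V_i)$, identify an index whose arguments must differ (hence $u\in V_i$), and then use $V_i\cap S\ne\emptyset$ together with the definition of $E$ to produce an edge from $u$ into $S$. The paper phrases the middle step as "$(S\cup T\cup\{u\})\cap V_i$ \emph{properly} contains $(S\cup T)\cap V_i$," but that is the same observation you make about an unchanged argument producing no change in value.
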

\begin{proof}
   By the leftmost equation in (\ref{eq:fs}), we deduce from $f^S(u|T)>0$ that there must exist some $i\in I_S$ such that $( S\cup T\cup\{u\})\cap V_i$ \emph{properly} contains $(S\cup T)\cap V_i$, which particularly implies that $u\in V_i\setminus S$. By definition, $I_S\ne\emptyset$ gives $S\cap V_i\ne\emptyset$. In turn, the construction of $G$ says that vertex $u$ is a neighbor of every vertex in $S\cap V_i\ne\emptyset$. 
\end{proof}  

The lemma provides us a preliminary approach to ensure connectivity: starting from some connected subset $S$ (the core), seeking an appropriate \emph{accompanying set} $T\subseteq V\setminus S$, and picking the elements $u$ with positive marginal values $f^S(u|T\setminus\{u\})$, and attaching all these $u$ to core $S$ to form a connected \tkm-subset that possesses a large value under $f$. In the process, the selection of core $S$ and the construction of accompanying set $T$ are two crucial factors, which jointly determine the \emph{attachments} (those $u$ attached). 

In the next two subsections, we consider two cases for the core $S$. In the first case  (Section~\ref{sec:star}), we start with a $1$-subset (trivially connected) as the core (center) and expand it to a ``star''  by attaching ``leaves''. In the second case (Section~\ref{sec:sunflower}), we take a connected $\lceil(r-1)k/r\rceil$-subset as the core (seed disc) to form a ``sunflower'' by attaching ``petals''. The accompanying set $T$ is then constructed using the batch-greedy augmentation procedure (introduced in Section \ref{sec:augment}) for the star and the greedy peeling procedure (presented in Section \ref{sec:peel}) for the sunflower, with the leaf and petal attachments selected according to Lemma \ref{lem:connect}.


\subsubsection{Star exploration}\label{sec:star}
Using an idea similar to that in \citet{chen2017finding}, we derive the following algorithm for C$\underline{k}$SPM, which greedily explores stars centered at all vertices $v$ of graph $G$, and outputs the best one as an $O(k^{r-1})$-approximate solution (Theorem \ref{thm:chdcksh}). 

\smallskip

\begin{algorithm}[H]
    \caption{Star exploration for connected $\underline{k}$-subsets}\label{alg:chdcksh}\label{alg:star}
    \KwIn{$f\in\mathscr F_{r}^V$ and $k\in[n-1]$}
    \For{$v\in V    $}
    {
        $T^v\gets\text{\textsc{Alg}}2(f^{\{v\}},r-1,k-1)$\;
        $L^v\gets\{u\in T^v:f^{\{v\}}(u|T^v\setminus\{u\})>0\}$\;
        $S^v\gets L^v\cup \{v\}$\;
    }
    \textbf{Output} $\textsc{Alg\ref{alg:chdcksh}}(f,k):=$ any \tkm-subset in  $\argmax_{S^v} \{f(S^v)\}$
\end{algorithm}

\smallskip
The greedy nature here is reflected in the fact that the set $T$ (i.e., $T^v$ in the algorithm) accompanying the core 
$\{v\}$ is a $(k-1)$-subset constructed in Step 2 using the greedy augmenting (Algorithm~\ref{alg:chmkss2}), specifically for the $(r-1)$-decomposable supermodular function $f^{\{v\}}$.
In Step 3, we select all $u\in T^v$ that make positive marginal contribution to $T^v\setminus\{u\}$, where the $\underline{(k-1)}$-subset $L^v$ holds the vertices selected. By Lemma~\ref{lem:connect}, each vertex in $L^v$ is connected to $\{v\}$. Therefore, each \tkm-subset $S^v$ generated in Step 4 constitutes the vertex set of a star centered at $v$, which particularly shows that the \tkm-subset output by Algorithm~\ref{alg:star} is connected.
\begin{theorem}\label{thm:chdcksh}\label{thm:star}
    Algorithm \ref{alg:chdcksh} runs in $O(mn^{r+1})$ time and achieves an approximation ratio of $O(k^{r-1})$ for the C$\underline{k}$SPM  problem on $\mathscr F_r$, provided $k>r$. 
\end{theorem}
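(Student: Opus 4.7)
At a high level, for an optimal connected $\underline{k}$-subset $S^*$ with $\opt:=f(S^*)$, I would argue that some $v\in S^*$ already witnesses $f(S^v)\ge\Omega(\opt/k^{r-1})$, so the best star over all $v\in V$ certainly does. Feasibility is direct: the rule $L^v=\{u\in T^v:f^{\{v\}}(u\mid T^v\setminus\{u\})>0\}$ together with Lemma \ref{lem:connect} shows that every $u\in L^v$ is adjacent to $v$ in $G$, hence each $S^v=\{v\}\cup L^v$ is a connected $\underline{k}$-subset and the algorithm is feasible.

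My first step would be to bound $f(S^v)$ from below by $f(\{v\})+f^{\{v\}}(T^v)$. The inequality $f(\{v\}\cup L^v)\ge f(\{v\})+f^{\{v\}}(L^v)$ is immediate from \eqref{eq:fs}. To pass from $L^v$ to $T^v$, I would peel the elements of $X:=T^v\setminus L^v$ from $T^v$ in an arbitrary order; by supermodularity of $f^{\{v\}}$, the marginal of each peeled element in the shrinking set is at most its marginal in $T^v\setminus\{u\}$, which is $\le 0$ by definition of $L^v$, so the $f^{\{v\}}$-value is nondecreasing along the peel and $f^{\{v\}}(L^v)\ge f^{\{v\}}(T^v)$. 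Next, by Lemma \ref{lem:fs}, $f^{\{v\}}\in\mathscr F_{r-1}$ is monotone and normalised; since $|S^*\setminus\{v\}|\le k-1$ for every $v\in S^*$, monotonicity of $f^{\{v\}}$ extends it to a feasible $(k-1)$-subset of $V\setminus\{v\}$, so Theorem \ref{thm:add} applied to $f^{\{v\}}$ with parameters $(r-1,k-1)$ yields
\[
f^{\{v\}}(T^v)\;\ge\;\frac{f^{\{v\}}(S^*\setminus\{v\})}{O(k^{r-2})}\qquad\text{for every }v\in S^*.
\]

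The crux is the averaging identity. Unfolding the definition of $f^{\{v\}}$ and using that $v\in S^*\cap V_i$ for each $i\in I_{\{v\}}$, one obtains
\[
f(\{v\})+f^{\{v\}}(S^*\setminus\{v\})\;=\;\sum_{i\in I_{\{v\}}}f_i(S^*\cap V_i)\;=\;\opt-\sum_{i\,:\,v\notin V_i}f_i(S^*\cap V_i).
\]
Summing over $v\in S^*$ and swapping the order of summation,
\[
\sum_{v\in S^*}\bigl[f(\{v\})+f^{\{v\}}(S^*\setminus\{v\})\bigr]\;=\;\sum_{i=1}^{m}|V_i\cap S^*|\,f_i(S^*\cap V_i)\;\ge\;\opt,
\]
where the inequality uses $|V_i\cap S^*|\ge 1$ whenever $f_i(S^*\cap V_i)>0$ (because $f_i$ is normalised). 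Combining with the previous display,
\[
\sum_{v\in S^*}f(S^v)\;\ge\;\frac{1}{O(k^{r-2})}\sum_{v\in S^*}\bigl[f(\{v\})+f^{\{v\}}(S^*\setminus\{v\})\bigr]\;\ge\;\frac{\opt}{O(k^{r-2})},
\]
so some $v\in S^*$ satisfies $f(S^v)\ge\opt/(|S^*|\cdot O(k^{r-2}))=\opt/O(k^{r-1})$, which is the claimed ratio.

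For the runtime, the $n$ outer iterations each invoke Algorithm \ref{alg:chmkss2} with parameters $(r-1,k-1)$ on $f^{\{v\}}$; the $O(kn^{r-1}/r)$ oracle queries each cost $O(m)$ to evaluate through the decomposition, giving $O(mkn^{r-1})=O(mn^r)$ per iteration and $O(mn^{r+1})$ overall. I expect the main obstacle to be the averaging identity --- recognising that $|V_i\cap S^*|\ge 1$ whenever $f_i$ contributes to $\opt$ is what converts an apparently messy cross-sum into the clean bound $\opt$; the peel argument for $f^{\{v\}}(L^v)\ge f^{\{v\}}(T^v)$ and the invocation of Theorem \ref{thm:add} are then routine.
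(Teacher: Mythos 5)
Your proof is correct and follows the same skeleton as the paper's: lower-bound $f(S^v)\ge f(\{v\})+f^{\{v\}}(T^v)$ via (\ref{eq:fs}), invoke Theorem~\ref{thm:add} on $f^{\{v\}}\in\mathscr F_{r-1}$ with parameters $(r-1,k-1)$, and average over the optimal solution. The one place you genuinely deviate is the bound $\sum_{v\in S^*}\bigl[f(\{v\})+f^{\{v\}}(S^*\setminus\{v\})\bigr]\ge\opt$: you swap the order of summation to get $\sum_i|V_i\cap S^*|\,f_i(S^*\cap V_i)$ and observe that each nonzero $f_i$-term is counted at least once (since $f_i$ is normalized), whereas the paper bounds each summand from below by the marginal $f(v\mid S^*_{opt}\setminus\{v\})$ and then invokes the telescoping inequality~(\ref{eq:super}). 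Both are sound and essentially the same algebra in disguise; the paper's framing has the small advantage of comparing against the optimum of the unconstrained $\underline{k}$SPM problem, which is what yields the stronger Corollary~\ref{cor:star}, while your version compares against the connected optimum only, which is all the theorem as stated requires. Your peeling argument for $f^{\{v\}}(L^v)\ge f^{\{v\}}(T^v)$ is a harmless elaboration of the paper's one-line claim $f^{\{v\}}(L^v)=f^{\{v\}}(T^v)$ (equality in fact holds, because $f^{\{v\}}$ is monotone and nonnegative so those marginals are exactly zero rather than merely nonpositive), and the runtime accounting matches.
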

\begin{proof}
When calling the subroutine of Algorithm \ref{alg:chmkss2}, it necessitates invoking the oracle for $f^{\{v\}}$   $ O(n^{r+1})$ times (recalling Theorem~\ref{thm:add}). Furthermore, according to the definition of $f^{\{v\}}$, one run of its oracle requires $O(m)$ calls to the oracles of the $f_i$'s.  Consequently, the overall running time is upper bounded by $O(mn^{r+1})$. 

Let $S_{opt}^*$ be an optimal solution of the $\underline{k}$SPM and $\opt^*:=f(S^*_{opt})$. Then $\opt^*$ is an upper bound of the optimal objective value of the C$\underline{k}$SPM, and 
\[\sum_{v\in S^*_{opt}} f(v|S^*_{opt}\setminus\{v\})\ge\opt^*\] 
by $f$'s supermodularity (recall (\ref{eq:super})).  Notice from  
Theorem~\ref{thm:add} that there is a universal constant $\Phi$ such that  $f^{\{v\}}(S^*_{opt}\setminus\{v\})\le \Phi k^{r-2}\cdot f^{\{v\}}(T^v)$ holds for all $v\in S^*_{opt}$. For every $v\in S^*_{opt}$,  since $f^{\{v\}}$ is supermodular (by Lemma~\ref{lem:fs}), it is straightforward from its increasing marginal returns that $f^{\{v\}}(L^v)=f^{\{v\}}(T^v)$.  Recall from (\ref{eq:fs}) that $f(S^v)\ge f(v)+f^{\{v\}}(L^v)$.
In turn,   $f$'s nonnegativity gives
\[f(S^v)\ge f(v)+f^{\{v\}}(T^v)\ge \frac{f(v)+f^{\{v\}}(S^*_{opt}\setminus\{v\})}{\Phi k^{r-2}}\text{ for every }v\in S^*_{opt}.\]
Furthermore, since $f_i$'s are normalized, $f(v)=\sum_{i\in[m]}f_i(\{v\}\cap V_i)=\sum_{i\in[m]:v\in V_i}f_i(v)$. By the definition of $f^{\{v\}}$ and the nonnegativity of $f_i$'s, we have  
\begin{align*}
      f(v)+f^{\{v\}}(S^*_{opt}\setminus\{v\})&=\sum_{i\in[m]:v\in V_i}f_i(S^*_{opt}\cap V_i)\\
      &=\sum_{i=1}^mf_i(S^*_{opt}\cap V_i)-\sum_{i\in[m]:v\not\in V_i}f_i(S^*_{opt}\cap V_i)\\
      &\ge \sum_{i=1}^mf_i(S^*_{opt}\cap V_i)- \sum_{i=1}^mf_i((S^*_{opt}\setminus\{v\})\cap V_i)
 \end{align*} 
  It follows from $f$'s decomposition identities that 
  \[f(v)+f^{\{v\}}(S^*_{opt}\setminus\{v\})\ge f(S^*_{opt})-f(S^*_{opt}\setminus \{v\})=f(v|S^*_{opt}\setminus\{v\})\text{ for every }v\in S^*_{opt}.\]
    Therefore, the $\underline{k}$-subset output by Algorithm~\ref{alg:chdcksh} has value at least
    \[\max_{v\in V}f(S^v)\ge \frac{\sum_{v\in S^*_{opt}}f(S^v)}{|S^*_{opt}|}\ge \frac{\sum_{v\in S^*_{opt}} f(v|S^*_{opt}\setminus\{v\})}{k\cdot \Phi k^{r-2}  }\ge \frac{\opt^*}{\Phi k^{r-1}},\]
proving the $O(k^{r-1})$ approximation ratio.\end{proof}

Note that, in the above proof, the algorithm's performance is measured against the optimal solution $S^*_{opt}$ of the \tkm SPM problem, which does not impose the connectivity constraint.
\begin{corollary}\label{cor:star}
Provided that $k>r$ and $V$ is connected, Algorithm~\ref{alg:chdcksh}  outputs an $O(k^{r-1})$-approximate solution for the \tkm SPM problem on $\mathscr{F}_r$, which is connected.
\end{corollary}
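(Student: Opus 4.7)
My plan is to observe that the bound sought in the corollary is already embedded in the proof of Theorem~\ref{thm:chdcksh}, together with a one-line appeal to Lemma~\ref{lem:connect} for connectivity; so the argument will consist of pointing at, and lightly repackaging, the earlier derivation.

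First, I will fix an optimal solution $S^*_{opt}$ of the (unconstrained-connectivity) \tkm SPM problem, set $\opt^* := f(S^*_{opt})$, and dispose of the trivial case $\opt^* = 0$. Then I will invoke Lemma~\ref{lem:fs} to regard $f^{\{v\}}$ as an $(r-1)$-decomposable, monotone, nonnegative supermodular function on $V \setminus \{v\}$, and apply Theorem~\ref{thm:add} with parameters $r-1$ and $k-1$ (legitimate since $k > r$) to obtain a universal constant $\Phi$ with $f^{\{v\}}(S^*_{opt} \setminus \{v\}) \le \Phi k^{r-2}\, f^{\{v\}}(T^v)$ for every $v \in V$. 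Next, I will use the monotone-supermodular identity $f^{\{v\}}(L^v) = f^{\{v\}}(T^v)$ (removing elements with zero marginal into the rest does not change the value), the inequality (\ref{eq:fs}) to derive $f(S^v) \ge f(v) + f^{\{v\}}(L^v)$, and the decomposition identities for $f$ (together with the nonnegativity of the $f_i$'s) to conclude $f(v) + f^{\{v\}}(S^*_{opt} \setminus \{v\}) \ge f(v \mid S^*_{opt} \setminus \{v\})$. Averaging the resulting inequality $f(S^v) \ge f(v \mid S^*_{opt} \setminus \{v\})/(\Phi k^{r-2})$ over $v \in S^*_{opt}$, and applying the telescoping supermodular bound (\ref{eq:super}) to $S^*_{opt}$ together with $|S^*_{opt}| \le k$, I will deduce
\[\max_{v \in V} f(S^v) \;\ge\; \frac{1}{|S^*_{opt}|}\sum_{v \in S^*_{opt}} f(S^v) \;\ge\; \frac{\opt^*}{\Phi\, k^{r-1}},\]
which is the desired $O(k^{r-1})$ ratio measured against the \tkm SPM optimum.

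Finally, I will verify the ``connected'' claim. For any $v \in V$ and any $u \in L^v$, the defining condition $f^{\{v\}}(u \mid T^v \setminus \{u\}) > 0$ together with Lemma~\ref{lem:connect} (applied with $S = \{v\}$ and $T = T^v \setminus \{u\}$) guarantees that $u$ is a neighbour of $v$ in $G$; hence each $S^v = L^v \cup \{v\}$ induces a star in $G$ centred at $v$, and the algorithm's output, being some such $S^v$, is connected. I do not anticipate any real obstacle: the corollary is essentially the recognition that the proof of Theorem~\ref{thm:chdcksh} never exploited connectivity of $S^*_{opt}$ and that all candidates produced by Algorithm~\ref{alg:chdcksh} are stars. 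The only marginally subtle point is the equality $f^{\{v\}}(L^v) = f^{\{v\}}(T^v)$, which follows inductively because supermodularity forces each discarded marginal $f^{\{v\}}(u \mid (T^v\setminus\{u\}) \cap \mathrm{current\ set})$ to be at most the (nonpositive) value $f^{\{v\}}(u \mid T^v \setminus \{u\})$ while monotonicity keeps it nonnegative.
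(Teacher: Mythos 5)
Your proposal is correct and is essentially the paper's own argument: the paper simply remarks after the proof of Theorem~\ref{thm:chdcksh} that the ratio there was already established against the \tkm SPM optimum $\opt^*=f(S^*_{opt})$ rather than the connected optimum, and that each $S^v$ is a star (hence connected) by Lemma~\ref{lem:connect}. Your step-by-step recap of the chain from Theorem~\ref{thm:add}, the identity $f^{\{v\}}(L^v)=f^{\{v\}}(T^v)$, inequality (\ref{eq:fs}), and the telescoping bound (\ref{eq:super}) matches the proof of Theorem~\ref{thm:chdcksh} exactly.
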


In the star exploration of Algorithm~\ref{alg:chdcksh}, the initially connected set, comprising only a single center vertex, could potentially be too small for some instances. As a result, the leaf attachments might be disproportionately scattered, making it challenging to achieve a good balance between connectivity and the attained objective value. Conversely, if we start with a larger initial connected set $C$ --- the seed disc of a sunflower, then the selection of petal attachments according to $f^C$ may have more space to achieve a higher objective value.

\subsubsection{Sunflower growth} \label{sec:sunflower}
In this subsection, we take any connected  $\lceil (r-1)k/r \rceil$-subset $C$ as the sunflower disc (which, for example, can be constructed in $O(m)$ time by breadth-first search in $G$). Then, we use greedy peeling (introduced in Section~\ref{section:mkss}) to find the corresponding attachments, as specified in the following algorithm.

\begin{algprocedure}[H]
    \caption{Sunflower growth for connected \tkm-subsets}\label{alg:amckss}\label{alg:sunflower}
   \KwIn{$f\in\mathscr F_r^V$ and $k\in[n-1]$}
    $C \gets$ a connected $\lceil (r-1)k/r \rceil$-subset of $V$\;
    $T\gets$  \textsc{Alg}\ref{alg:pmkss}$(f^C,\lfloor k/r \rfloor)$\;\label{stp:t}
    $P \gets \{u\in T:f^C(u|T\setminus\{u\})>0\}$\;
    \textbf{Output} $\textsc{Prc\ref{alg:amckss}}(f,k):=C\cup P$.
\end{algprocedure}
Although the procedure indeed outputs a feasible solution for C\tkm SPM (see Lemma~\ref{lma:amckss}), its approximation ratio against the optimum can be arbitrarily bad in the worst case. On the other hand, its performance could be quantified in terms of  a universal lower bound $\delta$ on element marginal values and a positive constant $c$, which are defined as follows: 
\[\delta:=\min_{v\in V}f(v|V\setminus\{v\})\text{ and }c:=\frac{(r-1)!}{r^r\cdot (2r-2)^{r-1}}.\]
\begin{lemma}\label{lma:amckss}
 Procedure \ref{alg:amckss} runs in $O(mn^2)$   time, and outputs  a connected \tk-subset $S:=C\cup P$  of $V$ such that $f(S)\ge c\cdot (k/n)^{r-1} \cdot k\delta$.
\end{lemma}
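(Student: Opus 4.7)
The plan is to verify the running time, cardinality, and connectivity by routine observations, and then to establish the value bound $f(S)\ge c(k/n)^{r-1}k\delta$ via a short linear program. A BFS in $G$ builds $C$ in $O(m)$ time; each $f^C$-query costs $O(m)$ by unfolding the sum $\sum_{i\in I_C}[f_i((C\cup\cdot)\cap V_i)-f_i(C\cap V_i)]$; and the call to Algorithm~\ref{alg:pmkss} uses $O(n^2)$ such queries, giving the stated $O(mn^2)$ bound. The identity $\lceil(r-1)k/r\rceil+\lfloor k/r\rfloor=k$ (a small case analysis on $k\bmod r$) shows $|S|\le k$, and each $u\in P$ satisfies $f^C(u|T\setminus\{u\})>0$, so Lemma~\ref{lem:connect} places $u$ adjacent to $C$, and connectedness of $S=C\cup P$ follows from that of $C$.

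For the value estimate, I would first reduce $f(S)$ to a clean expression in $\alpha:=f(C)$ and $\beta:=f^C(V\setminus C)$. Inequality~(\ref{eq:fs}) yields $f(S)\ge f(C)+f^C(P)$, and a short induction on $|Q|$ with $Q\subseteq T\setminus P$ gives $f^C(T\setminus Q)=f^C(T)$: for $u\in Q$, the containment $(T\setminus Q)\setminus\{u\}\subseteq T\setminus\{u\}$ together with the supermodularity of $f^C$ forces $f^C(u|(T\setminus Q)\setminus\{u\})\le f^C(u|T\setminus\{u\})=0$, while the monotonicity of $f^C$ from Lemma~\ref{lem:fs} gives the reverse inequality. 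Hence $f(S)\ge f(C)+f^C(T)$. Applying Lemma~\ref{lma:pdk} to the $(r-1)$-decomposable function $f^C$ on $V\setminus C$, with peeling target $t=\lfloor k/r\rfloor\ge k/(2r)$ and ground-set size $N\le n$, and simplifying the constants via the identity $(r-1)!/((r-1)^{r-1}(2r)^{r-1})=rc$, produces $f^C(T)\ge rc\cdot(k/n)^{r-1}\beta$. Thus $f(S)\ge\alpha+\eta\beta$ with $\eta:=rc(k/n)^{r-1}\le 1$.

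To close the argument, I would derive a joint lower bound on $\alpha$ and $\beta$ that matches the target constant. The decomposition identities give $\alpha+\beta=\sum_{i\in I_C}f_i(V_i)$. Summing $\delta\le f(v|V\setminus\{v\})=\sum_{i:v\in V_i}f_i(v|V_i\setminus\{v\})$ over $v\in C$ and using $f_i(v|V_i\setminus\{v\})\le f_i(V_i)$ on each summand yields $|C|\delta\le\sum_{i\in I_C}|V_i\cap C|f_i(V_i)$. Splitting $I_C=I^1_C\cup I^2_C$ according to whether $V_i\subseteq C$ (so $|V_i\cap C|\le r$) or not (so $|V_i\cap C|\le r-1$) delivers the sharp bound
\[
rp+(r-1)q\ \ge\ |C|\delta,\qquad p:=\sum_{i\in I^1_C}f_i(V_i),\ q:=\sum_{i\in I^2_C}f_i(V_i),
\]
together with $\alpha\ge p$, $\beta\le q$, and $\alpha+\beta=p+q$. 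Writing $\alpha=p+\xi$, $\beta=q-\xi$ with $\xi\ge 0$, the target $\alpha+\eta\beta=p+\eta q+(1-\eta)\xi$ is minimized at $\xi=0$, reducing the task to the LP $\min\{p+\eta q : rp+(r-1)q\ge|C|\delta,\ p,q\ge 0\}$, whose optimum is $\min\{|C|\delta/r,\,\eta|C|\delta/(r-1)\}$. Since $|C|\ge(r-1)k/r$, plugging in $\eta=rc(k/n)^{r-1}$ makes the second value equal exactly $c(k/n)^{r-1}k\delta$, while the first is $\ge(r-1)k\delta/r^2\ge c(k/n)^{r-1}k\delta$ because $(r-1)/r^2\ge c$ and $(k/n)^{r-1}\le 1$.

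The main obstacle is the sharp degree bound $rp+(r-1)q\ge|C|\delta$: the naive inequality $r(p+q)\ge|C|\delta$ (replacing both coefficients by $r$) loses a factor of $r/(r-1)$ and falls just short of the stated constant. Separating $I_C$ into interior indices ($V_i\subseteq C$) and boundary indices ($V_i\setminus C\ne\emptyset$) is precisely what absorbs the $(r-1)/r$ shortfall in $|C|$ and produces the cleanly-matching $c=(r-1)!/(r^r(2r-2)^{r-1})$.
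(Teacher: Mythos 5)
Your proposal is correct and follows essentially the same route as the paper. Both arguments hinge on the same degree-sum inequality, equivalently $r\,f(C)+(r-1)f^C(V\setminus C)\ge|C|\delta$, obtained by splitting the indices $I_C$ into interior ones ($V_i\subseteq C$, degree at most $r$) and boundary ones ($V_i\cap C\neq\emptyset$, $V_i\not\subseteq C$, degree at most $r-1$), and both then apply Lemma~\ref{lma:pdk} to $f^C$ and close with the same arithmetic. Your reformulation in the $(p,q)$ variables and the two-point LP is a transparent repackaging of the paper's step of showing the coefficient of $f(C)$ is nonnegative, and your direct count $\sum_{i\in I_C}|V_i\cap C|\,f_i(V_i)$ avoids the paper's weighted double-counting with the $1/|V_i\cap C|$ factors, but the substance of the argument is the same.
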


\begin{proof}
 The runtime is dominated by the computation at Step~\ref{stp:t}. By Theorem~\ref{thm:peeling}, it involves $O(n^2)$ calls of $f^C$'s oracle, each of which consists of $O(m)$ calls of $f_i$'s oracles. 
 
 Note that $f^C$  is defined on $2^{V\setminus C}$, giving that $T$ is a $\lfloor k/r \rfloor$-subset of $ V\setminus C$. For each $u\in P$, since $f^{C}(u|T\setminus\{u\})>0$, by Lemma~\ref{lem:connect}, $\{u\}$ is connected to $C$. It follows from $C$'s connectivity that 
  $S$ is a connected \tk-subset of $V$. 
  
  It is straightforward  that $f^{C}(T)=f^{C}(P)$. By Lemma \ref{lma:pdk} and the $(r-1)$-decomposability of $f^C$ (recalling Lemma~\ref{lem:fs}), 
    we have
        \begin{align*}
            f^C(T)&\ge \left(\frac{(r-1)!}{(r-1)^{r-1}}\cdot \left\lfloor\frac{k}{r}\right\rfloor^{r-1}\middle/\left(n-\left\lceil \frac{(r-1)k}{r} \right\rceil\right)^{r-1}\right) \cdot f^C(V\setminus C)\\
            &\ge \frac{(r-1)!}{(r-1)^{r-1}}\cdot \left(\frac{k}{2r}\right)^{r-1} \cdot \left(\frac{1}{n}\right)^{r-1} f^C(V\setminus C)\\
            &= \frac{(r-1)!}{(2r-2)^{r-1}}\cdot \left(\frac{k}{rn}\right)^{r-1} f^C(V\setminus C),
        \end{align*}
  where the second inequality follows from $k/r-\lfloor k/r\rfloor<1\le\lfloor k/r\rfloor$.  By $f$'s decomposition identities, we have
    \[f(v|V\setminus\{v\})=f(V)-f(V\setminus\{v\})=\sum_{i: v\in V_i} f_i(V_i)-f_i(V_i\setminus\{v\})\le \sum_{i: v\in V_i} f_i(V_i),\]
    and 
    \begin{align*}
        f^{C}(V\setminus C)&=\left(\sum_{i: V_i\cap C\neq \emptyset}f_i(V_i)\right)-f(C)\\
        &= \left(\sum_{v\in C}\sum_{i: v\in V_i}\frac{f_i(V_i)}{|V_i\cap C|}\right)-f(C)\\
        &\ge \sum_{v\in C}\left(\sum_{i: v\in V_i}\frac{f_i(V_i)}{r-1}-\sum_{\substack{i: v\in V_i\subseteq C}}\frac{f_i(V_i)}{r(r-1)}\right)-f(C)\\
        &\ge \sum_{v\in C}\frac{f(v|V\setminus\{v\})}{r-1}-\sum_{i:V_i\subseteq C}\sum_{v\in V_i}\frac{f_i(V_i)}{r(r-1)}-f(C)\\
        &\ge \delta\cdot \frac{(r-1)k}{r}\cdot \frac{1}{r-1}-\sum_{i:V_i\subseteq C}\frac{|V_i|\cdot f_i(C)}{r(r-1)}-f(C)\\
        &\ge \frac{\delta k}{r} -\frac{f(C)}{r-1}-f(C)\\
        &=\frac{\delta k}{r}-\frac{rf(C)}{r-1},
    \end{align*}
where the first equation is implied by $f_i$'s normality, and the third inequality is implied by their monotonicity. Recall from (\ref{eq:fs}) that $f(S)=f(C\cup P)\ge f(C)+f^C(P)$. Thus,
    \begin{align*}
        f(S)&\ge f(C)+f^C(T)\\
        &\ge f(C)+\frac{(r-1)!}{(2r-2)^{r-1}}\cdot \left(\frac{k}{rn}\right)^{r-1} \cdot \left(\frac{k\delta}{r}-\frac{rf(C)}{r-1}\right)\\
        &= \frac{(r-1)!}{(2r-2)^{r-1}}\cdot\frac{1}{r^{r}}\left(\frac{k}{n}\right)^{r-1} \cdot k\delta+\left(1-\frac{r!}{(r-1)^r\cdot (2r)^{r-1}}\cdot\left(\frac{k}{n}\right)^{r-1}\right)f(C)\\
        &\ge c\cdot (k/n)^{r-1} \cdot k\delta,
    \end{align*}
  establishing the lemma.
\end{proof}

\subsection{Element removal}\label{sec:remove}
Recall from Theorem \ref{thm:dkpss} that one can find in $O(n^2)$ time a $\overline{k}$-subset  $\bv$ of $V$ that is an $(r+1)$-approximation of the densest $\overline{k}$-subset, i.e.,  $(r+1)\cdot\frac{f(\bv)}{|\bv|}\ge \frac{f(S)}{|S|}$ for every $\overline{k}$-subset $S$ of $V$. Since $f\in\mathscr F_r$ and $V$ is connected, the C\tkm SPM admits a $k$-subset as an optimal solution. Let $\opt$ denote the optimal objective value of the C\tkm SPM. Then
\begin{equation} \label{eq:dense}
   \frac{f(\bv)}{|\bv|}\ge \frac{\opt}{k(r+1)}.
\end{equation} 

For any $S\subseteq V$, an element $v\in S$ is called \emph{removable} in $S$ if $f(S\setminus\{v\})/(|S|-1)>f(S)/|S|$. 

\begin{algprocedure}[H]
    \caption{Element removals from $\overline{k}$-subsets}\label{alg:pre}
     \KwIn{$f\in\mathscr F_r^V$ and $k\in[n-1]$}
    $T\gets$ a $\overline{k}$-subset $\bar V$ of $V$ that satisfies (\ref{eq:dense})\;
    \While{$|T|>k$ and $T$ has a removable element}{
       $v\gets$ a removable element of $T$\;
       $T\gets T\setminus\{v\}$\;
    }
    $R\gets T$\;\label{step:outputb}
    \If{$|T|>k$ and $T$ contains a connected $\overline{k}$-subset}{
        $R\gets$ a maximal connected $\overline{k}$-subset of $T$\;\label{step:outputa}
       }
    \textbf{Output} $\textsc{Prc\ref{alg:pre}}(f,k):=R$ 
\end{algprocedure}
Henceforth, we reserve symbol $R$ to represent the output of Procedure \ref{alg:pre}, which takes $f$, its $r$-decomposition and $k$ as input. 

Moreover, the restriction of $f$ to $2^R$, denoted as $f|_{2^R}$, admits an $r$-decomposition $(V_i\cap R,f_{i}|_{2^{V_i\cap R}})_{i\in I_R}$, where $I_R$ consists the indices $i\in[m]$ with $V_i\cap R\ne\emptyset$. Thus, we can write $f|_{2^R}\in \mathscr F_r^R$ for brevity.

\begin{lemma}\label{lma:pre}
    Procedure \ref{alg:pre} runs in $O(n^2)$ time and outputs a $\overline{k}$-subset $R$ of $V$. One of the following holds:
    \begin{enumerate}[(a)]
        \item The procedure skip Step \ref{step:outputa}, and $f(R)\ge \opt/(r+1)$. \label{lma:prea}
        \item The procedure executes Step \ref{step:outputa},  $R$ is connected, $f(v|R\setminus\{v\})\ge \opt/(k(r+1))$ for each $v\in R$ and  $f(R)\ge \opt/(r(r+1))$. \label{lma:preb}
    \end{enumerate}
\end{lemma}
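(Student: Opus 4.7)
The plan separates into three pieces: the runtime and feasibility, case~(a) (via a density invariant), and case~(b) (anchored by a structural identity). For the runtime, Theorem~\ref{thm:dkpss} yields the initial $\bar V$ in $O(n^2)$ time; the while loop performs at most $n-k$ iterations, each testing removability of the $\le |T|$ remaining elements with one oracle call apiece, contributing another $O(n^2)$; finally, finding a maximal connected $\overline{k}$-subset of $T$ reduces to computing the connected components of $G[T]$, which is $O(n^2)$ since $G$ has $O(n^2)$ edges. The loop exit condition and the if-branch both ensure $|R| \ge k$, so the output is a $\overline{k}$-subset in either branch.

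For case~(a), the key observation is that $f(T)/|T|$ is strictly increasing across every iteration of the while loop, by the very definition of ``removable''. Combined with (\ref{eq:dense}), at termination $f(T)/|T| \ge f(\bar V)/|\bar V| \ge \opt/(k(r+1))$. If Step~\ref{step:outputa} is skipped, then $R = T$ with $|R| \ge k$, so $f(R) \ge |R|\cdot \opt/(k(r+1)) \ge \opt/(r+1)$ as claimed.

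The heart of the argument is case~(b), which rests on the identity
\[f(v \mid R\setminus\{v\}) = f(v \mid T\setminus\{v\}) \quad \text{for every } v \in R.\]
Since $R$ is a connected component of $G[T]$, no edge of $G$ joins $R$ to $T\setminus R$; as each $V_i$ is a clique in $G$, this forces $V_i \cap T \subseteq R$ for every $i \in I_R' := \{i : V_i\cap R\ne\emptyset\}$. Applying the decomposition $f(S) = \sum_i f_i(S \cap V_i)$ to $S \in \{T, T\setminus\{v\}, R, R\setminus\{v\}\}$, the contributions from indices outside $I_R'$ are unaffected by removing $v \in R$ (since $v \notin V_i$ whenever $i\notin I_R'$) and therefore cancel when taking the marginal, while those from $i \in I_R'$ match the corresponding sums over $R$. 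Because the while loop terminated without further removal, $f(v \mid T\setminus\{v\}) \ge f(T)/|T| \ge \opt/(k(r+1))$ for every $v \in T$, and in particular for every $v \in R$; combined with the identity, this yields the marginal bound in (\ref{lma:preb}). Summing over $v \in R$ and invoking Proposition~\ref{prop:cf} ($\sum_{v\in R} f(v|R\setminus\{v\}) \le r\cdot f(R)$) then gives $r\cdot f(R) \ge |R|\cdot \opt/(k(r+1)) \ge \opt/(r+1)$, so $f(R) \ge \opt/(r(r+1))$.

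I expect the identity above to be the principal obstacle. Supermodularity alone only delivers $f(v|R\setminus\{v\}) \le f(v|T\setminus\{v\})$ for $R \subseteq T$, which goes the wrong way, so one must genuinely exploit the maximality of the connected component $R$ together with the clique structure of each $V_i$ to upgrade this inequality to an equality. Once the equality is in hand, the remaining steps are routine bookkeeping on density and marginal-sum inequalities.
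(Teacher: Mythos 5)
Your proof is correct and follows essentially the same route as the paper's: bound the runtime by the $\bar V$ computation and the $O(n)$ iterations of the while-loop, use the non-decreasing density invariant plus inequality (\ref{eq:dense}) to get $f(T)/|T|\ge\opt/(k(r+1))$, handle case (a) by $|R|\ge k$, and handle case (b) by the non-removability bound $f(v|T\setminus\{v\})\ge f(T)/|T|$ together with the transfer to $R$ and Proposition~\ref{prop:cf}. The one place where you add value over the paper's terse presentation is the identity $f(v\mid R\setminus\{v\})=f(v\mid T\setminus\{v\})$ for $v\in R$: the paper simply says ``Observe that $R$ induces a connected component in $G[T]$'' and asserts the equality, while you correctly spell out why it holds --- each $V_i$ is a clique in $G$, so no $V_i$ meeting $R$ can reach $T\setminus R$, hence for $i\in I_R'$ one has $V_i\cap T=V_i\cap R$ and the marginals computed over $T$ and over $R$ coincide term-by-term in the decomposition. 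You also rightly note that supermodularity alone only yields the reverse inequality, so the clique/component structure is genuinely needed. This is the same argument with the missing detail filled in; no gap.
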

\begin{proof}
Clearly, $|R|\ge k$, and the runtime is dominated by the computation of $\bar V$, which takes $O(n^2)$ time. 

By the definition of removal elements, when the procedure terminates, the final $T$ satisfies $f(T)/|T|\ge f(\bar V)/|\bar V|$. Furthermore inequality (\ref{eq:dense}) gives \[\frac{f(T)}{|T|}\ge \frac{\opt}{k(r+1)}.\] 
    The validity of \ref{lma:prea} is straightforward, given that the set $R$ is assigned its final setting at Step~\ref{step:outputb} of the procedure. 
    
    To prove \ref{lma:preb}, we assume that the procedure executes Step \ref{step:outputa}. Then the final $T$, which consists of more than $k$ elements, cannot have any removable element. Thus, by definition, for each $v\in T$, we have $f(T\setminus\{v\})\le(|T|-1)f(T)/|T|$, and
    \[f(v|T\setminus\{v\})=f(T)-f(T\setminus\{v\})\ge f(T)-f(T)\cdot \frac{|T|-1}{|T|}=\frac{f(T)}{|T|}\ge \frac{\opt}{k(r+1)}.\]
   Observe that $R$ induces a connected component in $G[T]$. So $f(v|R\setminus\{v\})=f(v|T\setminus\{v\})\ge \opt/(k(r+1))$ for each $v\in R$. Besides, since $f$ is $r$-decomposable, it follows from Proposition~\ref{prop:cf} that 
    \[rf(R)\ge \sum_{v\in R} f(v|R\setminus\{v\})\ge \sum_{v\in R} \frac{\opt}{k(r+1)}=|R|\cdot \frac{\opt}{k(r+1)}\ge  \frac{\opt}{r+1}.\]
    It leads to $f(R)\ge \opt/(r(r+1))$ as desired.
\end{proof}

\begin{lemma}\label{lem:rconnect}
    If $R=\textsc{Prc\ref{alg:pre}}(f,k)$ is connected, then either $R$ is an $r(r+1)$-approximation for the C\tkm SPM, or the output by Procedure \ref{alg:amckss}, when given $f|_{2^R}\in\mathscr F_r^R$ and $k\in[n-1]$ as input, is an $O(n^{r-1}/k^{r-1})$-approximation for the  C\tkm SPM.
\end{lemma}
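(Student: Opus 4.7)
The plan is to case-split on which branch of Lemma~\ref{lma:pre} produced $R$ and, inside the second branch, on whether $|R|$ already equals $k$. In the ``easy'' cases, $R$ itself gives the required approximation; in the one nontrivial case, I will apply Procedure~\ref{alg:amckss} to the restriction of $f$ to $R$.

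First I would dispatch the cases where $R$ is already a connected $k$-subset. If case~\ref{lma:prea} of Lemma~\ref{lma:pre} holds and $R$ is connected, then $|R|$ must equal $k$: otherwise $R=T$ with $|T|>k$ would itself be a connected $\overline{k}$-subset of $T$, forcing case~\ref{lma:preb} to have been executed. In that subcase $f(R)\ge \opt/(r+1)$ gives directly an $(r+1)$-, hence $r(r+1)$-, approximation. If case~\ref{lma:preb} holds with $|R|=k$, the bound $f(R)\ge\opt/(r(r+1))$ from Lemma~\ref{lma:pre}\ref{lma:preb} immediately certifies an $r(r+1)$-approximation. The only remaining situation is case~\ref{lma:preb} with $|R|>k$.

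In that remaining situation I would first verify that $f|_{2^R}\in\mathscr F_r^R$ via the inherited decomposition $(V_i\cap R, f_i|_{2^{V_i\cap R}})_{i\in I_R}$, that the underlying graph of this restricted decomposition is exactly $G[R]$ (connected by hypothesis), and that $k\le|R|-1$, so Procedure~\ref{alg:amckss} legitimately applies to the input $(f|_{2^R},k)$. Its output $S$ is a connected $\underline{k}$-subset of $R\subseteq V$. Applying Lemma~\ref{lma:amckss} to this restricted input --- with ``$n$'' reinterpreted as $|R|$ and ``$\delta$'' as $\delta_R:=\min_{v\in R}f(v\mid R\setminus\{v\})$ --- yields $f(S)\ge c\cdot(k/|R|)^{r-1}\cdot k\cdot\delta_R$. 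The two final substitutions are $|R|\le n$ and the marginal lower bound $\delta_R\ge\opt/(k(r+1))$ from Lemma~\ref{lma:pre}\ref{lma:preb}, giving $f(S)\ge (c/(r+1))(k/n)^{r-1}\opt$, i.e.\ the promised $O(n^{r-1}/k^{r-1})$-approximation.

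The one place where care is required is the reinterpretation of Lemma~\ref{lma:amckss} when applied to the restriction $f|_{2^R}$: the ``$n$'' and ``$\delta$'' appearing inside that lemma are those of its own input, so here they must be read as $|R|$ and $\delta_R$ respectively, \emph{not} as the ambient $n$ and $\min_{v\in V}f(v\mid V\setminus\{v\})$ (for which Procedure~\ref{alg:pre} supplies no useful bound). The decomposition identities for $f$ guarantee both this identification and the equality of marginals $f|_{2^R}(v\mid R\setminus\{v\})=f(v\mid R\setminus\{v\})$, so once these are spelled out the rest of the argument is a one-line substitution, and the two approximation regimes in the lemma statement correspond transparently to the two possible outcomes of Procedure~\ref{alg:pre}.
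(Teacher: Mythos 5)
Your proof is correct and takes essentially the same route as the paper: handle $|R|=k$ directly via the bound $f(R)\ge\opt/(r(r+1))$, and for $|R|>k$ apply Procedure~\ref{alg:amckss} to $f|_{2^R}$, invoking Lemma~\ref{lma:amckss} with $n$ and $\delta$ reinterpreted as $|R|$ and $\min_{v\in R}f(v\mid R\setminus\{v\})$ and then feeding in $|R|\le n$ together with the marginal bound from Lemma~\ref{lma:pre}\ref{lma:preb}. Your explicit argument that a connected $R$ in case~\ref{lma:prea} forces $|R|=k$ (otherwise $T$ itself would be a connected $\overline{k}$-subset and Step~\ref{step:outputa} would fire) is a small clarification the paper leaves implicit when it invokes Lemma~\ref{lma:pre}\ref{lma:preb} in the $|R|>k$ branch, and your remark about reading the parameters of Lemma~\ref{lma:amckss} relative to the restricted instance correctly identifies the one place where the substitution could otherwise be misapplied.
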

\begin{proof}
By Lemma \ref{lma:pre}, we know that $|R|\ge k$ and $f(R)\ge {\opt}/{(r(r+1))}$.
If $R$ is a $k$-subset, then it is a feasible solution of the C\tkm SPM achieving the approximation ratio $r(r+1)$. 

 It remains to consider the case where $|R| > k$.  Suppose that $S:=\textsc{Prc\ref{alg:amckss}}(f|_{2^R},k)$. Then by Lemma \ref{lma:amckss} and Lemma \ref{lma:pre}(b), we see that $S$ is a connected \tkm-subset of $R$ satisfying 
    \[f(S)\ge c\cdot \frac{k^r}{|R|^{r-1}} \cdot \min_{v\in R}f(v|R\setminus\{v\}) \ge c\cdot \frac{k^r}{n^{r-1}} \cdot \frac{\opt}{k(r+1)}
    \ge \frac{c}{r+1}\cdot \left(\frac{k}{n}\right)^{r-1} \cdot \opt,\]
   which proves the lemma.
\end{proof}

The preceding lemma establishes an $O(n^{r-1}/k^{r-1})$-approximation for the  C\tkm SPM instances with $f\in\mathscr F_r^V$, when $R$ is connected. The next subsection addresses the complementary case in which $R$ is not connected. In this setting, we still have $|R|\ge k$ (by Lemma \ref{lma:pre}), but (by the execution of Procedure \ref{alg:pre})  no connected subset of $R$ contains $k$ or more elements. $R$ is the disjoint union of its maximal connected subsets, each of which is often referred to as a \emph{connected component} of $R$.

\subsection{Component merging}\label{sec:merge}
    In this subsection, we assume that the set $R$ returned by Procedure \ref{alg:pre} is not connected, and each connected component of $R$ contains no more than $k-1$ elements. 
    
  As established in Lemma~\ref{lma:pre},  $f(R)$ has been a constant approximation of the optimal objective value $\opt$:
 \begin{equation*}
     f(R)\ge\frac{\opt}{r+1}. 
 \end{equation*}
  Due to the function's decomposable structure, $f(R)$ equals the sum of the function values $f(C)$ over the connected components $C$ of $R$. If $R$ consists of a small number of connected components, then the component with the highest value serves as a good approximation for the C\tkm SPM problem. The difficulty arises when $R$ is partitioned into a large number of connected components, most of which have a small cardinality.
  \begin{itemize}
        \item A connected component $S$ of $R$ is called \emph{small} if $|S|<k/3$, and \emph{big} otherwise. 
        \item Let $\mathbb S$ (resp.\ $\mathbb{B}$) denote the set of all small (resp.\ big) connected components of $R$. We often call each member of $\mathbb S$ a \emph{small component} for short.
  \end{itemize}
 \begin{equation}
    \sum_{S\in\mathbb{S}\cup\mathbb{B}}f(S)=f(R)\ge\frac{\opt}{r+1}.\label{eq:fr}
 \end{equation}
    Our basic idea is to group the small components in $\mathbb S$. For each group, we add some additional elements to connect them, forming a single connected subset. On one hand, we need to ensure that the number of groups is not too large, thereby overcoming the aforementioned difficulty. On the other hand, we must guarantee that each of these connected subsets is a \tkm-subset.
    
    The technical challenge lies in how to perform the grouping. To control the number of groups, a natural idea is to ensure that the total cardinality of the members (components in $\mathbb{S}$) in each group is not too small. However, considering this alone is insufficient. This is because connecting the group members requires adding extra elements, and the increase in cardinality from these additions depends on the graphical structure. For example, components that are ``far apart'' would require a large number of additional elements to connect. If they were grouped together, their original total cardinality must not be too large (due to the cardinality constraint $k$). In order to group small components in $\mathbb{S}$ that are ``close'' to each other as much as possible, we need to utilize the positional information of these small connected components within the graph $G$. To do this, we shrink each small component in $\mathbb S$ into a single node and find a spanning tree in the resulting graph. Using the hierarchical structure and relative positions of these nodes in the spanning tree, we can group and connect (merge) the small connected components that are relatively close to each other, thereby keeping both the number of groups and the cardinality of the connected subsets within appropriate ranges. This enables us to obtain a set $\mathbb M$ of $O(n/k)$ connected \tkm-subsets that collectively cover $R$ (see Procedure~\ref{alg:merge}). By choosing the best connected \tkm-subset in $\mathbb{M}\cup \mathbb{B}$, we derive an $O(n/k)$-approximation for the C\tkm SPM in the case of disconnected $R$ (see Theorem~\ref{lma:merge}).

\paragraph{Shrinking.} For convenience, we often use $|G|$ to denote the number of vertices in the graph $G=(V,E)$. Given any connected subset $S$ of $V$, \emph{shrinking} $S$ in $G$ means gluing all vertices in $S$ together to form a new vertex $v_S$ such that all vertices of $V \setminus S$ and all edges in $E$ with both ends in $V \setminus S$ are unchanged, each edge joining a vertex $u \in V \setminus S$ and a vertex in $S$ becomes an edge joining $u$ and $v_S $, and all edges with both ends in $S$ are deleted. We say that $v_S$ \emph{represents} (the vertices of) $S$.
\begin{itemize}
      \item Let $G/\mathbb S$ be the graph derived from $G$ by shrinking all members of $\mathbb S$, where each vertex outside  small components in $\mathbb{S}$ \emph{represents} itself.
\end{itemize}

To distinguish between the vertices in $G$ and those in $G/\mathbb S$, we refer to the latter as ``nodes.''
For any subgraph $H$ of $G/\mathbb{S}$, let $\Lambda(H)$ denote the set of vertices in $V$ that are represented by the nodes in $H$. 

\paragraph{BFS spanning tree.} Starting from an arbitrary node $t$ in $G/\mathbb S$, a breadth first search (BFS) over $G$ constructs a spanning tree $T$ of $G/\mathbb S$ \emph{rooted} at $t$, along with a \emph{reverse BFS ordering} $v_1,v_2,\ldots,v_{|T|}$ of its nodes, such that
\begin{itemize}
 \item  $v_{|T|}$ is the root $t$, and each node $v_i$ $ (1\le i\le|T|)$ has its index $i$ lower than its parent. 
\end{itemize}
For any subtree $T'$ of $T$ that contains $v_{|T|}$, and any node $v_i$ in $T'$, we also considered $T'$ being rooted at $v_{|T|}$, and  use $T'_i$ to denote the (maximal) subtree of $T'$ rooted at $v_i$, consisting of $v_i$ and all its descendants in $T'$. It is clear that $T'\setminus T'_i$ is a subtree of $T$ that contains $v_{|T|}$ unless $i=|T|$ (in which case, $T'\setminus T'_{|T|}=T'\setminus T':=\emptyset$). 

\paragraph{Merging by tree pruning.} The for-loop (Steps \ref{stp:for} --\ref{stp:endfor}) of Procedure~\ref{alg:merge} below merges small components to form the set $\mathbb M$, via pruning the spanning tree $T$ in a bottom-up fashion (from leaves to root). Each time a subtree is removed or several branches are pruned, the removed part corresponds to a connected \tkm-subset, which is added to $\mathbb M$. The for-loop iterates through the nodes of the tree in order $v_1, v_2, \dots, v_{|T|}$, considering the subtree rooted at each $v_i$ in turn. For each such subtree, the procedure works as follows:
\begin{itemize}
    \item  If the cardinality of the vertex set represented by the current subtree is less than $k/3$, the procedure does nothing and proceeds to the next node $v_{i+1}$. 
    \item If the cardinality falls within $[k/3, k]$, the procedure removes the subtree rooted at $v_i$ (Step \ref{step:delete2}) and places the vertex set it represents into $\mathbb M$ (Step \ref{stp:add-1}).
\item If the cardinality exceeds $k$, the procedure identifies a set of subtrees rooted at some children of $v_i$ such that the total number of vertices they represent is between $k/3$ and $2k/3$ (Step \ref{step:c}). The procedure then adds the set composed of these vertices, along with the vertices represented by $v_i$, to $\mathbb M$ (Step \ref{stp:add-more}), and removes these child subtrees (Step \ref{step:delete1}; note that $v_i$ itself is not removed at this stage). This process of pruning subtrees rooted at some children of $v_i$ and expanding $\mathbb M$ is repeated until the number of vertices represented by the subtree rooted at $v_i$ is at most $k$.
\end{itemize}
After handling the subtree rooted at $v_i$  in this way, the procedure continues to  $v_{i+1}$ and repeats the above steps.

\smallskip

\begin{algprocedure}[H]
    \caption{Small components merging for connected \tkm-subsets}\label{alg:merge}
    \KwIn{$\overline{k}$-subset $R=\textsc{Prc\ref{alg:pre}}(f,k)$ that is not connected}
    $\mathbb S\gets$ the set of small connected components of $R$\;
     $\mathbb{B}\gets$ the set of connected components of $R$ that are not small\;
    $\mathbb{M}\gets\emptyset$\;
    \If{$\mathbb S\ne\emptyset$}{
     $T\gets$ a spanning tree of $G/\mathbb{S}$ with reverse BFS ordering $v_1,\dots,v_{|T|}$ of its nodes\;\label{stp:tree}
        \For{$i=1$ \KwTo $|T|$}{\label{stp:for}
        \While{$|\Lambda(T_i)|> k$}{\label{step:condition}
            $C\gets$ a set of some children of $v_i$ in $T_i$ such that $k/3\le \sum_{v_h\in C} |\Lambda(T_h)|\le 2k/3$\;\label{step:c} \tcp{See the proof of Lemma~\ref{lma:merge} for the construction of $C$.}
            $\mathbb{M}\gets\mathbb{M}\cup \{\Lambda(v_i)\cup (\bigcup_{v_h\in C} \Lambda(T_h))\}$\;\label{stp:add-more}
            $T\gets T\setminus (\bigcup_{v_h\in C} T_h)$\; \label{step:delete1}
        }
        \If{$k/3\le |\Lambda(T_i)|\le k$}{
            $\mathbb{M}\gets\mathbb{M}\cup \{\Lambda(T_i)\}$\;\label{stp:add-1}
            $T\gets T\setminus T_i$\; \label{step:delete2}
        }
        
    }
    \If{$T\ne\emptyset$}{
        $\mathbb{M}\gets\mathbb{M}\cup \{\Lambda(T)\}$\;\label{stp:endfor}
    }}
    \textbf{Output} $\textsc{Prc\ref{alg:merge}}(R):=$ any set in $\argmax_{S'\in \mathbb{M}\cup \mathbb{B}} f(S')$
\end{algprocedure}

\begin{lemma}\label{lma:merge}
   Given disconnected $R=\textsc{Prc\ref{alg:pre}}(f,k)$, Procedure \ref{alg:merge} runs in $O(m)$ time and satisfies the following statements:
    \begin{enumerate}[(i)]
        \item The procedure produces a set $\mathbb M$ of connected \tkm-subsets such that $|\mathbb{M}|\le 3n/k+1$ and $\sum_{S\in \mathbb{M}} f(S)\ge \sum_{S\in \mathbb{S}} f(S)$. 
        \item The procedure outputs a connected \tkm-subset $M=\textsc{Prc\ref{alg:merge}}(R)$ such that $(r+1)\cdot (6n/k+1)\cdot f(M)\ge\opt$.
    \end{enumerate}
\end{lemma}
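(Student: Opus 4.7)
The plan is to verify each claim of the lemma in turn, exploiting the reverse BFS order to control cardinality and connectivity, and to combine (i) with Lemma~\ref{lma:pre} via an averaging argument to get (ii). For the runtime, the spanning tree of $G/\mathbb{S}$ can be built by a single BFS on $G$ in $O(m)$ time (each small component being contracted on the fly), and across the for-loop every tree node and every scan of its children is touched only a constant number of times, giving $O(m)$ overall.

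For the connectivity and cardinality of each $S \in \mathbb{M}$, I would first note that every node of $T$ represents a connected subset of $V$ and every tree edge reflects a $G$-edge between the two represented sets; thus $\Lambda(T_i)$ and $\Lambda(v_i)\cup\bigcup_{v_h\in C}\Lambda(T_h)$ are connected in $G$. The cardinality bounds hinge on the existence of $C$ at Step~\ref{step:c}: any child $v_h$ of $v_i$ that is still alive when $v_i$ is processed must satisfy $|\Lambda(T_h)|<k/3$, for otherwise Step~\ref{step:delete2} would have removed $T_h$ when $v_h$ was processed. Since $|\Lambda(v_i)|\le k/3$ (the node $v_i$ represents either a singleton or a small component), greedily adding children until the partial sum first reaches $k/3$ produces $C$ with $\sum_{v_h\in C}|\Lambda(T_h)|\in[k/3,2k/3]$, so the set added at Step~\ref{stp:add-more} has cardinality in $[k/3,k]$. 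The Step~\ref{stp:add-1} set obeys the same bounds by its if-condition, and the terminal $\Lambda(T)$ added at Step~\ref{stp:endfor} satisfies $|\Lambda(T)|<k/3$, since otherwise Step~\ref{step:delete2} would have consumed it when the root was processed. The bound $|\mathbb{M}|\le 3n/k+1$ then follows by charging: every addition to $\mathbb{M}$ other than the terminal one eliminates at least $k/3$ vertices from the pool tracked by $T$, which originally has $n$ vertices.

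For the value inequality in~(i), I would show that every $S'\in\mathbb{S}$ is contained in at least one $S\in\mathbb{M}$ by tracing the representative node $v_{S'}$: it is either deleted inside some $T_h$ via Step~\ref{step:delete1} or inside some $T_i$ via Step~\ref{step:delete2} (and thus lies inside the set added at that instant), or it survives to the terminal $\Lambda(T)$. Assigning each $S'$ to one such $S$ yields, for each $S\in\mathbb{M}$, a pairwise disjoint family of small components all contained in $S$. Iterating the supermodularity inequality $f(A)+f(B)\le f(A\cup B)$ (valid for disjoint $A,B$ since $f(\emptyset)=0$) and then applying monotonicity to the inclusion $\bigcup S'\subseteq S$ gives $\sum_{S'\mapsto S}f(S')\le f(S)$; summing over $\mathbb{M}$ yields the desired inequality.

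For part~(ii), the decomposability of $f$ together with the observation that each $V_i$ meets at most one connected component of $R$ (otherwise $V_i$ would induce an edge bridging two components of $R$) gives $f(R)=\sum_{S\in\mathbb{S}\cup\mathbb{B}}f(S)$. Combined with~(i) and Lemma~\ref{lma:pre}(a) (which applies since $R$ is disconnected, so Procedure~\ref{alg:pre} cannot have executed its Step~\ref{step:outputa}), this produces $\sum_{S\in\mathbb{M}}f(S)+\sum_{S\in\mathbb{B}}f(S)\ge f(R)\ge\opt/(r+1)$. Since $|\mathbb{B}|\le 3n/k$ (big components are disjoint with $\ge k/3$ vertices each), $|\mathbb{M}|+|\mathbb{B}|\le 6n/k+1$, and averaging then gives $f(M)\ge \opt/((r+1)(6n/k+1))$. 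The main obstacle, in my view, is the covering argument for~(i): the sets of $\mathbb{M}$ need not be pairwise disjoint because a kept node $v_i$ can contribute $\Lambda(v_i)$ to several Step~\ref{stp:add-more} additions, so one has to explicitly assign each small component to a single $S\in\mathbb{M}$ and invoke monotonicity together with supermodularity rather than attempt a direct summation of individual values.
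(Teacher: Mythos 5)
Your proposal is correct and follows essentially the same route as the paper: induction over the reverse-BFS order to maintain the invariant that every surviving subtree below the current node represents fewer than $k/3$ vertices, a greedy choice of $C$, a $k/3$-charging argument for $|\mathbb{M}|\le 3n/k+1$, a cover-then-supermodularity-plus-monotonicity chain for $\sum_{S\in\mathbb{M}}f(S)\ge\sum_{S'\in\mathbb{S}}f(S')$, and an averaging step over $\mathbb{M}\cup\mathbb{B}$ using Lemma~\ref{lma:pre}(a). The only (minor) deviations are your slightly different greedy stopping rule for $C$ (stop once the partial sum first reaches $k/3$, which gives the same $[k/3,2k/3]$ window since each child contributes less than $k/3$) and your more careful handling of the final averaging, where you bound $\sum_{S\in\mathbb{M}}f(S)+\sum_{S\in\mathbb{B}}f(S)$ directly rather than $\sum_{S\in\mathbb{M}\cup\mathbb{B}}f(S)$, thereby sidestepping the question of whether $\mathbb{M}$ and $\mathbb{B}$ could overlap.
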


\begin{proof}
   To establish the correctness and runtime of  Procedure \ref{alg:merge}, let $d_i$ denote the degree of node $v_i$ in the initial spanning tree computed at Step~\ref{stp:tree}. We observe that each $v_i$ can only be eliminated from the tree $T$ at or after the end of $i$-th iteration of the for-loop. We prove by induction on $i$ the following claim for the $i$-th iteration of the for-loop:
    \begin{itemize}
        \item[(\textsc{c1})] when $|\Lambda(T_i)|> k$,  the children set $C$ as specified in Step~\ref{step:c} can be found in $O(d_i)$ time; and
        \item[(\textsc{c2})]  at the end of the iteration, either $v_i$ is eliminated from $T$, or $|\Lambda(T_i)|<k/3$.
    \end{itemize}
    The base case corresponds to the leaves of the spanning tree and holds true, because, by the definition of $\mathbb{S}$, we have $|\Lambda(v)|<k/3$ for each node $v\in G/\mathbb S$. 
    
    Suppose the claim (consisting of (\textsc{c1}) and (\textsc{c2})) holds for all indices smaller than $i$. If $|\Lambda(T_i)|<k/3$ at the start of the $i$th iteration, then the iteration has done nothing and the claim is true. If $k/3\le |\Lambda(T_i)|\le k$ at the start of the $i$-th iteration, then the while-loop has done nothing and $v_i$ is deleted from the tree at the end of the $i$-th iteration (Step \ref{step:delete2}). Now we consider the cases that $|\Lambda(T_i)|> k$ at the start of the $i$th iteration, for which the while-loop is executed.  Let $v_{i_1},\dots,v_{i_c}$ be all the children of $v_i$ at that time. By the property of the reverse BFS ordering, the indices $i_1,\ldots,i_c$ of these children are all smaller than $i$. It follows from the induction hypothesis that 
    \begin{center}
        $|\Lambda(T_{i_j})|<k/3$ for all $j\in[c]$.
    \end{center}     
    Since $|\Lambda(T_i)|>k$ and $|\Lambda(v_i)|<k/3$, we have $\sum_{j=1}^c |\Lambda(T_{i_j})|=|\Lambda(T_i)|-|\Lambda(v_i)|> 2k/3$. Let $c'$ be the smallest index such that $\sum_{j=1}^{c'} |\Lambda(T_{i_j})|\ge 2k/3$. Then $|\Lambda(T_{i_{c'}})|<k/3$ enforces $k/3<\sum_{j=1}^{c'-1} |\Lambda(T_{i_j})|\le 2k/3$. So we can set $C=\{v_{i_1},\dots,v_{i_{c'-1}}\}$, which, along with $c<d_i$, justifies (\textsc{c1}). Consider the last execution of the while-loop in the $i$-th iteration of the for-loop. At the beginning of the execution, Step \ref{step:condition}, the set $\Lambda(T_i)$ has cardinality $|\Lambda(T_i)|> k$. After removing a $\overline{2k/3}$-subset $\bigcup_{v_h\in C} \Lambda(T_h)$ at Step \ref{step:delete1}, it becomes a \tkm-subset, from which we deduce that $k/3\le |\Lambda(T_i)|\le k$ holds at the end of the while-loop. In turn, Step \ref{step:delete2} eliminates $T_i$, which includes $v_i$. This proves (\textsc{c2}) and, consequently, the claim, thereby confirming the correctness of Procedure~\ref{alg:merge}. The runtime follows from the fact that $\sum^{|G/\mathbb{S}|}_{i=1}d_i=2(|G/\mathbb{S}|-1)\le 2n$ and it takes $O(m)$ time for Step~\ref{stp:tree} to construct a BFS spanning tree in $G/\mathbb{S}$.

    By the claim, it is straightforward to check that each subset we put into $\mathbb M$ is a connected \tkm-subset. Recall that initially $|\Lambda(T)| = n$. Each time we put a set in $\mathbb M$, we must immediately delete a rooted subtree from $T$ (at Step \ref{step:delete2}) or delete several subtrees rooted at some children of the same node from $T$ (at Step \ref{step:delete1}). In either case, $|\Lambda(T)|$ decreases by at least $k/3$. It follows that  $|\mathbb{M}|\le 3n/k+1$.

   By the construction of $G/\mathbb S$, each small connected component in $\mathbb S$ is contained in some connected \tkm-subset in  $ \mathbb{M}$. Then, since these small connected components are pairwise disjoint, we have
    \[\sum_{S\in \mathbb{M}} f(S) \ge \sum_{S\in \mathbb{M}} f\bigg(\bigcup_{S'\in\mathbb S:S'\subseteq S}S'\bigg)\ge \sum_{S\in \mathbb{M}} \sum_{S'\in\mathbb S:S'\subseteq S} f(S')\ge\sum_{S'\in \mathbb{S}} f(S'),\]
where the first inequality is implied by $f$'s monotonicity, and the second is by $f$'s supermodularity and normality. Statement (i) is proved.

To see statement (ii), we recall that the members of $\mathbb{B}$ are pairwise disjoint $\overline{k/3}$-subsets. So $|\mathbb{B}|\le n/(k/3)=3n/k$, and by (i),  $S$ is a connected \tkm-subset of $V$.  Moreover, recalling (\ref{eq:fr}), we have
    \[\sum_{S\in  \mathbb{M}\cup \mathbb{B}} f(S)\ge \sum_{S\in  \mathbb{S}\cup\mathbb{B}} f(S)\ge \frac{\opt}{r+1}.\]
    It implies that 
    \[\opt\le (r+1)\sum_{S\in \mathbb{M}\cup\mathbb{B}}f(S)\le(r+1)(|\mathbb{M}|+|\mathbb{B}|)\cdot \max_{S\in \mathbb{M}\cup\mathbb{B}}f(S)\le (r+1)\cdot (6n/k+1)\cdot f(M),\]
 establish (ii).   \end{proof}

\subsection{Integration}\label{integrate}
In this subsection, we first integrate Procedures  \ref{alg:amckss} -- \ref{alg:merge} into an algorithm (Algorithm~\ref{alg:mckss}) for solving the C\tkm SPM problem, achieving an approximation ratio of  $O(n^{r-1}/k^{r-1})$. Then, we combine this algorithm with the star-exploration algorithm (Algorithm \ref{alg:chdcksh}), which has an approximation ratio $O(k^{r-1})$, to provide an $O(n^{(r-1)/2})$-approximation for the C\tkm SPM problem.

In Algorithm \ref{alg:mckss} below, we first run Procedure \ref{alg:pre}, removing removable elements one by one from $V$ until no removable elements remain or only $k$ elements are left. The resulting set $R$ is a $\overline{k}$-subset. If $R$ is a connected $k$-subset (Step~\ref{stp:kset}), it is returned as the output of the algorithm. If $R$ is connected but contains more than $k$ elements (Step~\ref{stp:else1}), we run Procedure  \ref{alg:amckss}. Starting from any connected $\lceil (r-1)k/r \rceil$-subset of $R$, the ``sunflower seed disc'', we grow it into a connected \tkm-subset, ``sunflower'', through petal attachments, and this subset is returned as the output.
If $R$ is not connected (Step \ref{stp:else}), we run Procedure \ref{alg:merge} to merge small connected components of $R$, forming a ``small'' number of connected \tkm-subsets. Among these subsets and the big connected components of $R$, the best subset is selected as the algorithm's output.

\smallskip

\begin{algorithm}[H]
    \caption{Removing-Attaching-Merging for connected \tkm-subsets}\label{alg:mckss}\label{alg:overall}
     \KwIn{$f\in\mathscr F_r^V$ and $k\in[n-1]$}
    $R\gets\textsc{Prc}\ref{alg:pre}(f,k)$\;
    \If{$R$ is connected}{
        \If{$|R|=k$}{\label{stp:kset}
              $S\gets R$\;\label{step:output1}
        }
        \Else{\label{stp:else1}
             $S\gets \textsc{Prc\ref{alg:amckss}}(f|_{2^R},k)$\; \label{step:output2}
        }
    }
    \Else{\label{stp:else}
        $S\gets$   \textsc{Prc}\ref{alg:merge}($R$)\;
           }
            \textbf{Output} $S$ \label{step:output3}
\end{algorithm}

\begin{theorem}\label{thm:mckss}\label{thm:overall}
   Algorithm \ref{alg:mckss} runs in $O(mn^2)$ time and achieves an approximation ratio of  $O(n^{r-1}/k^{r-1})$ for the C$\underline{k}$SPM  problem on $\mathscr F_r$, provided $k>r$. 
\end{theorem}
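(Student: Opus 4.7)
The plan is to proceed by case analysis along the three branches of Algorithm~\ref{alg:mckss}, invoking in each branch the corresponding bound already established in Lemmas~\ref{lma:pre}, \ref{lem:rconnect}, and \ref{lma:merge}, and then to reconcile the resulting approximation ratios under a single $O(n^{r-1}/k^{r-1})$ bound.

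For the runtime, I would simply add up the per-subroutine costs. By Lemma~\ref{lma:pre}, Procedure~\ref{alg:pre} runs in $O(n^2)$ time. After that, at most one of Procedure~\ref{alg:amckss} (costing $O(mn^2)$ by Lemma~\ref{lma:amckss}) or Procedure~\ref{alg:merge} (costing $O(m)$ by Lemma~\ref{lma:merge}) is invoked. The total is therefore $O(mn^2)$.

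For the approximation ratio, let $R := \textsc{Prc}\ref{alg:pre}(f,k)$. If $R$ is connected, Lemma~\ref{lem:rconnect} directly yields the claimed bound, covering both the $|R|=k$ branch (where $R$ itself is output and is a constant-factor approximation) and the $|R|>k$ branch (where $\textsc{Prc}\ref{alg:amckss}(f|_{2^R},k)$ is output and achieves ratio $O(n^{r-1}/k^{r-1})$). If $R$ is disconnected, I would first verify the implicit hypothesis of Lemma~\ref{lma:merge}: no connected $\overline{k}$-subset of $R$ exists. Indeed, if $|R|>k$ and $R$ contained such a subset, then Step~\ref{step:outputa} of Procedure~\ref{alg:pre} would have reassigned $R$ to a maximal connected $\overline{k}$-subset, contradicting disconnectedness; and if $|R|=k$, the only $\overline{k}$-subset of $R$ is $R$ itself, which is disconnected. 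With the precondition verified, Lemma~\ref{lma:merge}(ii) applies and gives $(r+1)(6n/k+1)\cdot f(S)\ge\opt$, an $O(n/k)$-approximation.

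To combine the two cases, I observe that since $k\le n$ and $r\ge2$ (the latter because $V$ is connected and hence $m\ge1$ with some $|V_i|\ge2$), we have $n/k\le(n/k)^{r-1}=n^{r-1}/k^{r-1}$, so the $O(n/k)$ bound from the disconnected case is absorbed into $O(n^{r-1}/k^{r-1})$. All three algorithmic branches therefore produce a connected \tkm-subset within ratio $O(n^{r-1}/k^{r-1})$, completing the proof. The main obstacle is really only bookkeeping: the technical heavy lifting has been carried out in the preceding lemmas, and the one subtle point is confirming that Procedure~\ref{alg:merge}'s precondition on $R$ is automatically enforced by the logic of Procedure~\ref{alg:pre}.
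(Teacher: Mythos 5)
Your proposal is correct and follows essentially the same route as the paper's proof: both reduce the theorem to Lemmas~\ref{lma:pre}, \ref{lem:rconnect}, and \ref{lma:merge} via case analysis on whether $R$ is connected, and both add the per-procedure runtimes. The only difference is that you make explicit the verification that Procedure~\ref{alg:merge}'s precondition (no connected $\overline{k}$-subset in $R$) is enforced by Procedure~\ref{alg:pre} and that $r\ge 2$ absorbs the $O(n/k)$ bound into $O(n^{r-1}/k^{r-1})$, details the paper leaves implicit.
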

\begin{proof}
    The runtimes of Procedures \ref{alg:amckss}, \ref{alg:pre} and \ref{alg:merge} are $O(mn^2)$, $O(n^2)$, and $O(m)$, respectively, as specified in Lemmas \ref{lma:amckss}, \ref{lma:pre} and \ref{lma:merge}. Thus, the overall runtime of the algorithm is $O(mn^2)$. The approximation ratio $O(\max\{r(r+1),n^{r-1}/k^{r-1},(r+1)(6n/k+1)\})$, i.e.,  $O(n^{r-1}/k^{r-1})$, follows immediately from Lemmas \ref{lem:rconnect} and \ref{lma:merge}.
\end{proof}

Similar to the analysis of Algorithm \ref{alg:star}, when evaluating the performance of Algorithm \ref{alg:overall}, we can replace the optimal value $\opt$ for the C\tkm SPM problem in inequality (4) with the optimal value $\opt^*$ for the \tkm SPM problem, and hence replace every occurrence of $\opt$ in the subsequent analysis with $\opt^*$. This implies that, if $V$ is connected, then Algorithm \ref{alg:overall} in fact outputs an $O(n^{r-1}/k^{r-1})$-approximation for \tkm SPM, which is connected.
\begin{corollary}\label{cor:overall}
Provided that $k>r$ and $V$ is connected, Algorithm~\ref{alg:mckss}  outputs an $O(n^{r-1}/k^{r-1})$-approximate solution for the \tkm SPM problem on $\mathscr{F}_r$, which is connected.
\end{corollary}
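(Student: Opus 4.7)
The plan is to trace through the analysis of Theorem~\ref{thm:mckss} and verify that every inequality involving the C\tkm SPM optimum $\opt$ remains valid when $\opt$ is replaced by the \tkm SPM optimum $\opt^*$. Since $\opt \le \opt^*$ and the algorithm's output is guaranteed to be a connected \tkm-subset by construction, this direct substitution will establish the stronger corollary.

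First I would pin down the one place where $\opt$ enters the analysis through a property specific to connectivity: the key foundational inequality (\ref{eq:dense}), which states $f(\bar V)/|\bar V| \ge \opt/(k(r+1))$. The derivation currently uses that the C\tkm SPM admits a $k$-subset as optimal solution (since $V$ is connected). But since $f$ is monotone and $|V| \ge k+1$, the \tkm SPM problem also admits a $k$-subset $S^*_{opt}$ as optimal solution (any optimal \tkm-subset can be extended to a $k$-subset without decreasing $f$). Then $S^*_{opt}$ is itself a $\overline{k}$-subset, so Theorem~\ref{thm:dkpss} applied to $\bar V$ yields
\[\frac{f(\bar V)}{|\bar V|} \ge \frac{1}{r+1}\cdot \frac{f(S^*_{opt})}{|S^*_{opt}|} = \frac{\opt^*}{k(r+1)}.\]

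Next I would re-read Lemmas \ref{lma:pre}, \ref{lem:rconnect}, and \ref{lma:merge}, observing that every appearance of $\opt$ in those proofs is derived, via purely algebraic manipulation, from the above inequality combined with purely algorithmic facts (the removal rule, the connectivity of components, Procedure~\ref{alg:amckss}'s guarantees, and the tree-pruning argument). None of these derivations invoke any connectivity property of the optimum. Hence one may simultaneously substitute $\opt \mapsto \opt^*$ throughout, obtaining the $\opt^*$-versions of the three lemmas: in the connected-$R$ case with $|R|=k$ the bound is $r(r+1)$; in the connected-$R$ case with $|R|>k$ the bound is $O(n^{r-1}/k^{r-1})$; in the disconnected-$R$ case the bound is $(r+1)(6n/k+1)=O(n/k)$. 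Since $r \ge 2$ and $k \le n$, the maximum of these is $O(n^{r-1}/k^{r-1})$.

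Finally I would confirm that the output $S$ is connected in all three branches: Step~\ref{step:output1} returns $R$ which is connected by the branch hypothesis; Step~\ref{step:output2} returns $\textsc{Prc\ref{alg:amckss}}(f|_{2^R},k)$, which is connected by Lemma~\ref{lma:amckss}; the disconnected branch returns a member of $\mathbb{M} \cup \mathbb{B}$, which is connected by Lemma~\ref{lma:merge}(i) and the definition of $\mathbb{B}$. Combined with the $\opt^*$-versions of the lemmas, this yields the claim. The main obstacle is really only the first step, namely confirming that $S^*_{opt}$ can be taken as a $k$-subset so that Theorem~\ref{thm:dkpss} applies to it as a $\overline{k}$-subset, and that no later step of the analysis silently exploits connectivity of the optimum; once this is checked the rest is bookkeeping.
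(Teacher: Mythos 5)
Your proposal is correct and takes essentially the same approach as the paper: the paper's argument for this corollary is precisely the one-paragraph remark preceding it, which says to replace $\opt$ by $\opt^*$ in inequality~(\ref{eq:dense}) and then propagate that substitution through Lemmas~\ref{lma:pre}, \ref{lem:rconnect}, and \ref{lma:merge}. You have simply spelled out the same substitution argument in more detail, including the key observation that by monotonicity the $\underline{k}$SPM optimum is attained on a $k$-subset (hence a $\overline{k}$-subset) so that Theorem~\ref{thm:dkpss} applies, and the verification that connectivity is guaranteed in each branch of Algorithm~\ref{alg:mckss}.
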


The combination of Theorems~\ref{thm:chdcksh} and \ref{thm:mckss} gives the following main result of this section. Taking the better solution output by Algorithms \ref{alg:chdcksh} and \ref{alg:mckss} (when $k>r$), and using brute-force search to find the optimum (in case of $k\le r$), we can guarantee an approximation ratio of $O(\min\{k^{r-1},n^{r-1}/k^{r-1}\})$ with $\min\{k^{r-1},n^{r-1}/k^{r-1}\}\le n^{(r-1)/2}$. 

\begin{theorem}\label{thm:ckspm}\label{thm:final}
    There is an   $O(n^{(r-1)/2})$-approximation algorithm for the C$\underline{k}$SPM (resp.\ C\tk SPM) problem on $\mathscr F_r$, which runs in $O(mn^{r+1})$ time. 
\end{theorem}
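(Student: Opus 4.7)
The plan is to obtain the claimed ratio by running Algorithms \ref{alg:chdcksh} and \ref{alg:mckss} in parallel and returning whichever output has larger $f$-value, with a separate brute-force case when $k$ is small. Since $r$ is a fixed constant, in the regime $k\le r$ one can exhaustively enumerate all $O(n^r)$ candidate subsets and verify connectivity and $f$-value for each in $O(n^{r+1})$ time, producing an exact optimum. So the interesting regime is $k>r$, which is exactly where both Algorithms \ref{alg:chdcksh} and \ref{alg:mckss} apply.

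In the case $k>r$, the two theorems established earlier in this section do the work. By Theorem \ref{thm:chdcksh}, the star-exploration output is an $O(k^{r-1})$-approximation and is produced in $O(mn^{r+1})$ time, while by Theorem \ref{thm:mckss} the Removing-Attaching-Merging output is an $O(n^{r-1}/k^{r-1})$-approximation produced in $O(mn^2)$ time. Running both and returning the better subset therefore gives a connected $\underline{k}$-subset whose $f$-value is within $O(\min\{k^{r-1},n^{r-1}/k^{r-1}\})$ of $\opt$. The required bound
\[
\min\{k^{r-1},n^{r-1}/k^{r-1}\}\le n^{(r-1)/2}
\]
is an immediate AM-GM-style observation: the two quantities have product $n^{r-1}$, so the smaller is at most $\sqrt{n^{r-1}}=n^{(r-1)/2}$. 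The combined running time is dominated by the slower of the two subroutines, namely $O(mn^{r+1})$.

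To transfer the result from C$\underline{k}$SPM to C$k$SPM, I will invoke the ``indirect equivalence'' already noted in Section \ref{sec:intr}: since $f$ is monotone, any connected $\underline{k}$-subset $S$ can be extended to a connected $k$-subset $S'\supseteq S$ with $f(S')\ge f(S)$ by repeatedly adding a neighbour in $G$ of the current set (which exists as long as $V$ is connected and $|S|<k$), and the optimum values of the two problems coincide when restricted to the relevant connected component. This extension is done in $O(m)$ time per candidate and preserves the approximation ratio.

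I do not expect any serious technical obstacle: the hard work is entirely contained in Theorems \ref{thm:chdcksh} and \ref{thm:mckss}, whose proofs establish the two complementary ratios and their runtimes. The only subtle point worth double-checking in writing the proof is that taking the better of two approximations indeed gives the minimum of the two ratios (valid because both ratios are bounds of the form $\opt/f(\textsc{output})\le \rho$ against a common upper bound $\opt$), and that the small-$k$ brute force fits within the stated $O(mn^{r+1})$ time budget, which it does since $n^{r+1}\ge n^r$.
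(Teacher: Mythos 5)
Your proposal matches the paper's argument essentially verbatim: combine Theorems~\ref{thm:chdcksh} and~\ref{thm:mckss} by taking the better output (falling back to brute force when $k\le r$), observe $\min\{k^{r-1},n^{r-1}/k^{r-1}\}\le n^{(r-1)/2}$, and transfer from C$\underline{k}$SPM to C$k$SPM via the monotone ``indirect equivalence'' already noted in Section~\ref{sec:intr}. The runtime is dominated by Algorithm~\ref{alg:chdcksh}'s $O(mn^{r+1})$, so the claim follows; this is the same reasoning the paper gives in the sentence preceding the theorem.
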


Combined with Corollaries \ref{cor:star} and \ref{cor:overall}, we obtain the following corollary.

\begin{corollary}\label{cor:strong}
 Suppose that $V$ is connected. Then there is a polynomial-time algorithm that outputs an $O(n^{(r-1)/2})$-approximate solution for the $\underline{k}$SPM (resp.\ \tk SPM) problem on $\mathscr F_r$, which is connected.
\end{corollary}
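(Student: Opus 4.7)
The plan is to reduce everything to Corollaries~\ref{cor:star} and~\ref{cor:overall}, using the elementary inequality $\min\{a,b\}\le\sqrt{ab}$, and to dispose of the range $k\le r$ by a brute-force search that uses only the constancy of $r$. Throughout, denote by $\opt^*$ the optimum of the (connectivity-unconstrained) $\underline{k}$SPM instance at hand.

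\emph{Main range $k>r$.} I would run Algorithms~\ref{alg:chdcksh} and~\ref{alg:mckss} on the given instance and output whichever of the two produced connected $\underline{k}$-subsets has the larger $f$-value. By Corollary~\ref{cor:star}, the first output $S_1$ is connected and satisfies $f(S_1)\ge \opt^*/\alpha_1$ with $\alpha_1=O(k^{r-1})$; by Corollary~\ref{cor:overall}, the second output $S_2$ is connected and satisfies $f(S_2)\ge \opt^*/\alpha_2$ with $\alpha_2=O(n^{r-1}/k^{r-1})$. The better of the two therefore witnesses an approximation ratio of at most
\[\min\{\alpha_1,\alpha_2\}\le\sqrt{\alpha_1\alpha_2}=O(n^{(r-1)/2}),\]
as required. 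Both algorithms run in polynomial time, so the combined procedure does too.

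\emph{Corner range $k\le r$.} Since $r$ is a constant, I would brute-force enumerate all $\underline{k}$-subsets that are contained in some $V_i$; there are only $O(m\cdot 2^r)=O(n^{r+1})$ such candidates, each is a subset of a clique of $G$ and is therefore connected, and the best of them can be found in polynomial time. To bound the loss, let $S^*_{opt}$ be an optimal $\underline{k}$-subset and, for each non-empty $T\subseteq S^*_{opt}$, set $F_T:=\sum_{i:\,V_i\cap S^*_{opt}=T}f_i(T)$; then the decomposition identities give $f(S^*_{opt})=\sum_T F_T$, a sum with at most $2^k-1\le 2^r-1=O(1)$ non-zero terms, so some $T^*$ satisfies $F_{T^*}\ge f(S^*_{opt})/(2^r-1)$. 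Because every $i$ contributing to $F_{T^*}$ has $T^*\subseteq V_i$, the non-negativity of the other $f_j$'s yields
\[f(T^*)=\sum_{j=1}^m f_j(T^*\cap V_j)\ \ge\ \sum_{i:\,V_i\cap S^*_{opt}=T^*}f_i(T^*)\ =\ F_{T^*}\ \ge\ \frac{\opt^*}{2^r-1}.\]
Thus $T^*$ is itself a connected $\underline{k}$-subset within a constant factor of $\opt^*$, which is comfortably inside the $O(n^{(r-1)/2})$ target.

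\emph{From $\underline{k}$SPM to $k$SPM.} Given the connected $\underline{k}$-subset $\hat S$ produced above, if $|\hat S|<k$ I would extend it to a connected $k$-subset $\tilde S$ by appending $k-|\hat S|$ vertices one at a time, each chosen adjacent in $G$ to the current set; this is always possible because $V$ is connected and $n\ge k$. Monotonicity of $f$ guarantees $f(\tilde S)\ge f(\hat S)$, and since the $k$SPM optimum is at most $\opt^*$ (again by monotonicity), the same $O(n^{(r-1)/2})$ ratio is inherited.

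\emph{Main obstacle.} The principal subtlety is the case $k\le r$, where Algorithms~\ref{alg:chdcksh} and~\ref{alg:mckss} do not apply and one must still produce a \emph{connected} $\underline{k}$-subset comparable to a possibly disconnected $S^*_{opt}$. The decomposition-based argument above resolves this by observing that $S^*_{opt}$ has at most $O(2^r)=O(1)$ distinct intersection patterns with the $V_i$'s, so one of the (automatically connected) patterns carries a constant fraction of $f(S^*_{opt})$.
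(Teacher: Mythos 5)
Your proposal is correct. For the main range $k>r$ it mirrors the paper's own (terse) derivation exactly: Corollaries~\ref{cor:star} and~\ref{cor:overall} already assert that Algorithms~\ref{alg:chdcksh} and~\ref{alg:mckss} output \emph{connected} $\underline{k}$-subsets whose $f$-values compare to the unconstrained optimum $\opt^*$ within $O(k^{r-1})$ and $O(n^{r-1}/k^{r-1})$ respectively, and $\min\{k^{r-1},n^{r-1}/k^{r-1}\}\le n^{(r-1)/2}$ gives the claim; the passage from $\underline{k}$SPM to $k$SPM via extension along edges in the connected graph $G$ and monotonicity of $f$ is standard and also what the paper intends.

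Where you go beyond the paper is the corner range $k\le r$. The paper only says ``brute-force search to find the optimum'', which trivially yields the optimal \emph{connected} $\underline{k}$-subset, but Corollary~\ref{cor:strong} demands comparison against the possibly disconnected $S^*_{opt}$; the paper leaves that bridge implicit. Your $F_T$ argument closes it cleanly: since the $f_i$ are normalized (so $f_i(\emptyset)=0$), writing $\opt^*=\sum_{\emptyset\neq T\subseteq S^*_{opt}}F_T$ with $F_T=\sum_{i:\,V_i\cap S^*_{opt}=T}f_i(T)$ gives a sum of at most $2^r-1$ nonnegative terms, so some $T^*$ carries a $\frac{1}{2^r-1}$ fraction; any $i$ contributing to $F_{T^*}$ satisfies $T^*\subseteq V_i$, so $T^*$ spans a clique of $G$ and is therefore connected, and nonnegativity of the remaining $f_j$ yields $f(T^*)\ge F_{T^*}\ge\opt^*/(2^r-1)$. (If $\opt^*=0$ the claim is vacuous.) This uses only structure already in the paper, so it is a legitimate completion rather than a divergent route. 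Two minor quibbles: the candidate count is $O(m\cdot 2^r)=O(m)$, not $O(n^{r+1})$ --- both are polynomial, so nothing breaks --- and a simpler-looking alternative of bounding $\opt^*\le\sum_i f(V_i\cap S^*_{opt})\le m\cdot\max_i f(V_i\cap S^*_{opt})$ does \emph{not} suffice here (it only yields an $O(m)$ ratio, which exceeds $O(n^{(r-1)/2})$ when $r=2$), so your pattern-grouping is genuinely needed.
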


Taking $r=2$ in the above theorem, our algorithm improves the approximation ratio for the densest connected $k$-subgraph (DC$k$S) problem from the previously best $O(n^{2/3})$ \cite{chen2017finding} to $O(n^{1/2})$.
\begin{corollary}
   There is an $O(\sqrt{n})$-approximation algorithm for the  DC$k$S  problem, which runs in $O(mn^3)$ time. 
\end{corollary}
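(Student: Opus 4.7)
The plan is to recognize DC$k$S as an instance of the C$k$SPM problem on $\mathscr{F}_2$ and invoke Theorem~\ref{thm:final} with $r=2$. Concretely, given a connected graph $G=(V,E)$ with edge weights $w\in\mathbb R_+^E$, I would define, for each edge $e=uv\in E$, the set $V_e:=\{u,v\}$ and the function $f_e:2^{V_e}\to\mathbb R_+$ by $f_e(V_e)=w_{uv}$ and $f_e(S)=0$ for $S\subsetneq V_e$. Setting $f(S):=\sum_{e\in E}f_e(S\cap V_e)$ yields the total edge weight of $G[S]$, so maximizing $f$ over connected $k$-subsets coincides with DC$k$S (since $|S|=k$ is fixed, dividing by $k$ does not affect the argmax).

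Next, I would verify the structural hypotheses of Theorem~\ref{thm:final}. Each $f_e$ is normalized, nonnegative, and monotone; the single nontrivial supermodularity inequality $f_e(\{u\})+f_e(\{v\})\le f_e(\{u,v\})+f_e(\emptyset)$ holds since its left-hand side is $0$. Hence $f\in\mathscr F_2^V$. The underlying graph associated with this decomposition has edge set $\bigcup_{e\in E}\{uv:\{u,v\}=V_e\}=E$, so it is exactly $G$; in particular, ``$S$ connected'' means the same thing in both problems, and the hypothesis that $V$ is connected is precisely the standing assumption on $G$ in DC$k$S.

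Having set up this reduction, I would simply apply Theorem~\ref{thm:final} at $r=2$: it outputs, in $O(mn^{r+1})=O(mn^3)$ time, a connected $k$-subset whose $f$-value is within a factor $O(n^{(r-1)/2})=O(\sqrt n)$ of $\opt$. Translating back, this subset is an $O(\sqrt n)$-approximate solution to DC$k$S with the stated runtime, giving the improvement over the previous $O(n^{2/3})$ bound of \cite{chen2017finding}.

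The main obstacle is essentially conceptual rather than technical: one must check that the natural edge-weight encoding produces a legitimate $r$-decomposition of $f$ (both as a sum decomposition and in terms of the induced graph on which connectivity is imposed). Once this verification is in place, the approximation ratio and runtime follow mechanically by substituting $r=2$ in Theorem~\ref{thm:final}, so no further calculation is required.
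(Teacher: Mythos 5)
Your proof is correct and takes essentially the same route as the paper: DC$k$S is cast as C$k$SPM on $\mathscr F_2$ via the edge-level decomposition $(V_e,f_e)_{e\in E}$, and Theorem~\ref{thm:final} with $r=2$ yields the $O(\sqrt n)$ ratio and $O(mn^3)$ runtime. The paper sets this up (with a cosmetic $1/k$ scaling in the $f_e$'s that, as you note, is immaterial to the argmax) in its discussion of the special case of C$k$SPM, and the corollary is read off directly from the theorem.
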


When $r=2$, the performance ratio $O(\sqrt{n})$ in Corollary \ref{cor:strong} is best possible, because, as shown for the DC$k$S problem \cite{chen2017finding}, the ratio between the maximum density of $k$-subgraph and the maximum density of a connected $k$-subgraph can be as high as $\Omega(\sqrt{n})$.

\section{Concluding remarks}\label{section:con}
In this paper, we study the cardinality-constrained supermodular maximization problems, addressing versions both with and without a connectivity requirement.   We present polynomial-time approximation algorithms for both sets of problems, assuming the underlying nonnegative supermodular function is monotone and $r$-decomposable, with $r$ being a constant. Our algorithms yield two complementary approximation ratios: $O(k^{r-1})$ and  $O(n^{r-1}/k^{r-1})$, which combine to achieve an overall  $O(n^{(r-1)/2})$-approximation ratio.

\subsection{Approximation for \tkm SPM on non-monotone functions}
Although for monotone functions, we can solve the \tkm SPM problem using the algorithm for the $k$SPM problem, this approach generally does not work for non-monotone functions. However, a slight modification to the greedy peeling algorithm, Algorithm  \ref{alg:pmkss} (originally designed for $k$SPM), enables us to handle the \tkm SPM problem for non-monotone functions. Let Algorithm  \ref{alg:pmkss}' denote the modification of Algorithm  \ref{alg:pmkss}, where Step 5 is replaced with ``\textbf{\textup{Output}} $S_k$ or $\emptyset$ whichever has the larger function value''.

\begin{corollary}\label{cor:peeling}
    Algorithm \ref{alg:pmkss}'  achieves an approximation ratio of  $O(n^{r-1}/k^{r-1})$ for the  $\underline{k}$SPM  problem on nonnegative supermodular functions that are $r$-decomposable, provided $r<k$.
\end{corollary}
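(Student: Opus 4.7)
The plan is to follow the proof of Theorem~\ref{thm:peeling} almost verbatim, with two adjustments: replace $\opt$ throughout by $\opt^* := \max_{|S|\le k} f(S)$, and exploit the new option of outputting $\emptyset$ whenever it has larger value than $S_k$. Let $S^*$ be an optimal $\underline{k}$-subset, $\opt^* = f(S^*)$, and $k^* = |S^*|$. If $k^*=0$ then $S^*=\emptyset$ and the algorithm's $\emptyset$-branch already attains $\opt^*$. Otherwise, I would reproduce the shrinking construction $S^* = S'_{k^*}\supseteq\cdots\supseteq S'_0$ (peeling any $v\in S'_h$ with $f(v|S'_h\setminus\{v\})\le \opt^*/(2k)$), obtaining $f(S'_0)\ge \opt^*/2$ and $f(v|S'_0\setminus\{v\})>\opt^*/(2k)$ for every $v\in S'_0$. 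If the chain collapses to $S'_0=\emptyset$, then $f(\emptyset)\ge \opt^*/2$ and the $\emptyset$-output is a $2$-approximation.

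I would then perform the same Case~2/Case~1 dichotomy as in Theorem~\ref{thm:peeling}. In Case~2 (some $t\in[k+1,n]$ with $f(v|S_t\setminus\{v\})>\opt^*/(2k)$ for all $v\in S_t$) the proof carries over unchanged: neither Proposition~\ref{prop:cf} nor Lemma~\ref{lma:pdk} uses monotonicity, since Lemma~\ref{lma:pdk} only needs $f(S_h)\ge0$ (which absorbs a possibly negative $\min_v f(v|S_h\setminus\{v\})$, as $(h-r)/h<1$). Therefore the same chain of inequalities delivers $f(S_k)\ge \tfrac{r!}{r^r\cdot 2r}(k/n)^{r-1}\opt^*$ and ratio $O(n^{r-1}/k^{r-1})$.

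The real work is Case~1 (all marginals in the algorithmic peel are $\le \opt^*/(2k)$), where supermodularity still gives $S'_0\subseteq S_k$ but monotonicity is unavailable to conclude $f(S_k)\ge f(S'_0)$. Here I would exploit the fact that, for any $v\notin T$, supermodularity together with $f(\{v\})\ge0$ yields $f(v|T)\ge f(v|\emptyset)=f(\{v\})-f(\emptyset)\ge -f(\emptyset)$; telescoping along any ordering of $S_k\setminus S'_0$ gives $f(S_k)\ge f(S'_0)-(k-|S'_0|)f(\emptyset)\ge \opt^*/2 - kf(\emptyset)$. Combining this with the alternative output $\emptyset$, the supermodular inequality $\opt^*\le f(V)+f(\emptyset)$ (which follows from $f(S^*)+f(V\setminus S^*)\le f(V)+f(\emptyset)$ and $f(V\setminus S^*)\ge 0$), and the sharper Case-1 telescope $f(V)-f(S_k)=\sum_{i=k+1}^n f(v_i|S_i\setminus\{v_i\})\le (n-k)\opt^*/(2k)$, I would case-split on whether $f(\emptyset)$ exceeds a carefully chosen threshold. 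The main obstacle I foresee is the calibration of that threshold: the naive split yields an $O(k)$ ratio in Case~1, which exceeds $O(n^{r-1}/k^{r-1})$ when $k$ is moderately large, so the bookkeeping must thread together the supermodular telescope, $\opt^*\le f(V)+f(\emptyset)$, and Lemma~\ref{lma:pdk} (which bounds $f(V)\le \binom{n}{r}/\binom{k}{r}\cdot f(S_k)$) to get the $O(n^{r-1}/k^{r-1})$ guarantee uniformly in $k$; I expect this balancing to be the most delicate part of the proof.
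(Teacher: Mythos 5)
Your diagnosis of where monotonicity enters (Case~1 of the proof of Theorem~\ref{thm:peeling}) and your replacement of $\opt$ by $\opt^*$ are both correct, and you are right that Case~2, Lemma~\ref{lma:pdk}, and Proposition~\ref{prop:cf} all go through without monotonicity. The gap is in how you handle Case~1. You bound $f(S_k)$ below via a per-element telescope, using $f(v|T)\ge -f(\emptyset)$ for each of the up to $k$ elements added to pass from $S'_0$ to $S_k$, which gives $f(S_k)\ge f(S'_0)-kf(\emptyset)$. That factor-$k$ slack is what forces the $O(k)$ ratio you then try (and openly fail) to amortize against $\opt^*\le f(V)+f(\emptyset)$ and Lemma~\ref{lma:pdk}; indeed that route cannot recover $O(n^{r-1}/k^{r-1})$ uniformly in $k$.

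The missing idea is to apply the supermodular-plus-nonnegativity trick you already used for $(S^*,V\setminus S^*)$ directly to the pair $(S'_0,\,S_k\setminus S'_0)$ instead of telescoping: since $S'_0\subseteq S_k$ and $f\ge 0$, supermodularity gives
\[
f(S'_0)\ \le\ f(S'_0)+f(S_k\setminus S'_0)\ \le\ f(S_k)+f(\emptyset)\ \le\ 2\max\{f(S_k),f(\emptyset)\},
\]
so the output of Algorithm~\ref{alg:pmkss}' satisfies
\[
\frac{\opt^*}{\max\{f(S_k),f(\emptyset)\}}\ \le\ \frac{2\,\opt^*}{f(S'_0)}\ \le\ 4
\]
in Case~1. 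This single inequality eliminates the $k$-dependence entirely, gives a constant ratio in Case~1, and makes the whole delicate balancing you describe unnecessary; the rest of Theorem~\ref{thm:peeling}'s proof then carries over verbatim. As written, your proof is incomplete: you flag the balancing as an open obstacle and do not close it, and the one-shot bound is the step that closes it.
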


Indeed, in the proof of Theorem~\ref{thm:peeling}, the monotonicity of the function $f$ is used only in the first case, where $f(v_i|S_i\setminus\{v_i\})\le \opt/(2k)$ for each $i\in[k+1,n]$. Because $f$ is nonnegative and supermodular,  
    \[f(A)\le f(A)+f(B\setminus A)\le f(B)+f(\emptyset)\le 2\max\{f(B),f(\emptyset)\}\text{ holds for all }A\subseteq B\subseteq V.\] 
    Thus, for the case, we now have \[\frac{\opt}{\max\{f(S_k),f(\emptyset)\}}\le \frac{2\cdot \opt}{f(S'_0)}\le 4,\]
  as a substitute for the conclusion $\opt/f(S_k)\le2$ drawn for the case in monotone setting. The remaining arguments in the proof of Theorem \ref{thm:peeling} remain valid for non-monotone functions, and the $O(n^{r-1}/k^{r-1})$ approximation ratio in Corollary~\ref{cor:peeling} follows.  

\subsection{Optimization under tight connectivity}
Beyond the standard approach to defining connectedness of subsets (which we have discussed so far), there is another natural, yet stronger, notion of tight connectivity. Given $r$-decomposition $(V_i,f_i)_{i=1}^m$ of function $f\in\mathscr F_r$, we call a subset $S$ of $V$ {\em tightly connected} if the hypergraph with vertex set $S$ and hyperedge set $\{V_i:S\subseteq V_i,i\in[m]\}$ is connected. Clearly, if $S$ is tightly connected, then $S$ is connected; however, the converse is not necessarily true. Moreover, for the ground set $V$, connectedness and tight connectedness coincide.

We prove in Appendix \ref{sec:scon} that the supermodular maximization for finding a maximum-valued tightly connected $\underline{k}$-subset preserves the asymptotic approximation ratio achievable for the \tkm SPM problem. In particular, the connectivity property stated in Corollary \ref{cor:strong} is strengthened to tight connectivity, as highlighted by the following corollary. 
\begin{corollary}\label{cor:tight-strong}
  Suppose that $V$ is connected. Then there is a polynomial-time algorithm that outputs an $O(n^{(r-1)/2})$-approximate solution for the $\underline{k}$SPM problem on $\mathscr F_r$, which is tightly connected.
\end{corollary}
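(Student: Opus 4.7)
The plan is to invoke Corollary~\ref{cor:strong} with a scaled cardinality $k':=\lfloor k/(r-1)\rfloor$ to obtain a graph-connected $\underline{k'}$-subset of good value, then blow it up to a tightly connected $\underline{k}$-subset by completing the hyperedges $V_i$ along a spanning tree. The first ingredient is an \emph{expansion lemma}: for every graph-connected $S\subseteq V$ with $|S|\ge 2$, there is a tightly connected superset $S'\supseteq S$ with $|S'|\le (r-1)(|S|-1)+1$. To construct $S'$, take any spanning tree $\tau$ of $G[S]$; for each edge $\{u,v\}\in\tau$, choose some index $i(uv)\in[m]$ with $\{u,v\}\subseteq V_{i(uv)}$ (which exists by the construction of $G$), and set $S':=S\cup\bigcup_{\{u,v\}\in\tau}V_{i(uv)}$. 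Every chosen $V_{i(uv)}$ lies in $S'$, and these hyperedges share vertices exactly along the tree edges, so the hypergraph on $S'$ whose hyperedges are the $V_{i(uv)}$ is connected; hence $S'$ is tightly connected. Each $V_{i(uv)}$ contributes at most $r-2$ new elements beyond its two tree-endpoints, which yields the cardinality bound. The degenerate case $|S|=1$ is handled by replacing $S$ with any $V_i$ containing its single element.

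Next I would apply Corollary~\ref{cor:strong} with parameter $k'$ to the function $f\in\mathscr F_r$, producing in polynomial time a graph-connected $\underline{k'}$-subset $S\subseteq V$ with $f(S)\ge\opt^*_{k'}/O(n^{(r-1)/2})$, where $\opt^*_{k'}$ is the optimum of $\underline{k'}$SPM (without connectivity constraint). To relate $\opt^*_{k'}$ to the true benchmark $\opt^*:=\opt^*_k$, I would show $\opt^*_{k'}\ge c_r\opt^*$ for a constant $c_r>0$ depending only on $r$. Let $S^*$ be an optimum $\underline{k}$-subset; the restriction $f|_{2^{S^*}}$ is $r$-decomposable via $(V_i\cap S^*,f_i|_{2^{V_i\cap S^*}})_{i\in I_{S^*}}$. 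Run the greedy peeling of Algorithm~\ref{alg:pmkss} on this restricted function from $S^*$ down to size $k'$; the proof of Lemma~\ref{lma:pdk}---which relies only on Proposition~\ref{prop:cf} (giving $c_f\le r$) together with a telescoping estimate---transfers verbatim to this restricted ground set and yields a $k'$-subset $S^{**}\subseteq S^*$ with $f(S^{**})\ge(r!/r^r)(k'/|S^*|)^rf(S^*)\ge c_r\opt^*$, hence $\opt^*_{k'}\ge c_r\opt^*$.

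Finally, apply the expansion lemma to $S$ to obtain a tightly connected $S'\supseteq S$ with $|S'|\le(r-1)|S|\le(r-1)\lfloor k/(r-1)\rfloor\le k$, so $S'$ is a tightly connected $\underline{k}$-subset. Monotonicity of $f$ gives $f(S')\ge f(S)\ge c_r\opt^*/O(n^{(r-1)/2})=\opt^*/O(n^{(r-1)/2})$, establishing the claimed $O(n^{(r-1)/2})$-approximation in polynomial time. The boundary case $k\le r(r-1)$ (where $k'\le r$ and Algorithm~\ref{alg:pmkss}'s peeling bound no longer applies) is absorbed by brute-force search over tightly connected $\underline{k}$-subsets, paralleling the $k\le r$ treatment in Theorem~\ref{thm:final}; since $r$ is constant, this is polynomial in $n$.

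The main obstacle I anticipate is ensuring the cardinality arithmetic is tight: the expansion consumes exactly the factor $r-1$ budgeted in the scaling $k'=\lfloor k/(r-1)\rfloor$, so any off-by-one in the spanning-tree count (e.g., $(r-1)|S|+1$ rather than $(r-1)(|S|-1)+1$) would risk pushing $|S'|$ beyond $k$. The careful accounting in the expansion lemma is what exactly matches the budget. The remaining pieces---transferring Lemma~\ref{lma:pdk} to the restricted ground set $S^*$ and invoking monotonicity to preserve the ratio after enlargement---are routine.
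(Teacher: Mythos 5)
Your proof is correct and takes essentially the same route as the paper's Appendix~C: shrink the target cardinality to roughly $k/r$, then make the connected output tightly connected by completing a hyperedge $V_i$ along each edge of a spanning tree, with Lemma~\ref{lma:pdk} covering the constant-factor loss incurred by the size reduction. The paper organizes the same ingredients slightly differently --- it runs Corollary~\ref{cor:strong} at cardinality $k$ and then peels the output $S_k$ down to a connected $\underline{k'}$-subset via Corollary~\ref{cor:overall} on the restricted instance, using $k'=\lfloor k/r\rfloor$ and the looser cardinality bound $|S'_k|\le r|S_{k'}|$, whereas you invoke Corollary~\ref{cor:strong} once at $k'=\lfloor k/(r-1)\rfloor$ and peel the (unobserved) optimum $S^*$ only in the analysis, using the tight bound $(r-1)(|S|-1)+1$ --- but the expansion lemma and the peeling estimate are the same.
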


\subsection{Future research directions} 
This work has focused on maximizing over supermodular functions with cardinality and connectivity constraints. Richer constraints open several intriguing research directions. Two of the most natural extensions are knapsack and matroid constraints.

\textbf{Knapsack constraints.} In many practical applications, such as resource allocation and budgeting problems, a simple cardinality constraint may be too  {restrictive}. A more generalized form of the problem is to consider a knapsack constraint: assign a nonnegative weight $w(v)$ to each element $v \in V$, and require that the weighted sum of the subset $S \subseteq V$, $\sum_{v \in S} w(v) $, does not exceed a given budget $W$. 
Note that the cardinality constraint is a special case where $W = k$ and $w(v) = 1$ for all $v \in V$. \citet{sviridenko2004note} has studied the problem of submodular maximization under knapsack constraints.
    
\textbf{Matroid constraints.} Matroids provide a rich generalization of independence structures and have been widely used in combinatorial optimization. Matroid constraints often arise in network design and machine learning applications, where dependencies between elements need to be captured. Given a matroid $(V, \mathcal I)$ consisting of a finite set $V$ and a non-empty collection $\mathcal I$ of subsets of $V$. The matroid constraint restricts feasible solutions to members of $\mathcal I$. Note that the cardinality constraint corresponds to the uniform matroid, i.e., $\mathcal I = \{ S \subseteq V : |S| \leq k \}$. \citet{calinescu2011maximizing} has studied the problem of submodular maximization under matroid constraints.
 
Exploring supermodular maximization under these richer constraint classes promises both theoretical challenges and practical impact.


\newpage
\appendix

\section{The arbitrarily large approximation ratio of the single-element augmentation for 2-decomposable functions} \label{sec:bai} 
The algorithm proposed by \citet{bai18greed} starts from the empty set and greedily adds a single element with the highest marginal value at each step (see Algorithm \ref{alg:bai}).

\begin{algorithm}[H]
    \caption{Greedy algorithm for $k$SPM by \citet{bai18greed}}\label{alg:bai}
    \KwIn{A supermodular function $f:2^V\rightarrow \mathbb{R}_+$ and an integer $k\in[n]$}
    $S_0\gets\emptyset$\;
    \For{$i=1$ \KwTo $k$}
    {
        $s_i\gets\text{any element} \in \text{argmax}_{v\in V\setminus S_{i-1}}\{f(v|S_{i-1})\}$\;
        $S_{i}\gets S_{i-1}\cup \{s_i\}$\;
    }
    \textbf{Output} $S_k$
\end{algorithm}

The approximation ratio of this algorithm for monotone 2-decomposable nonnegative supermodular functions can be arbitrarily large, even for a ground set of fixed size $n \geq 4$. To see this, consider the function $f:2^V \rightarrow \mathbb R_+$ defined on $V=\{v_1,v_2,\dots,v_n\}$ as follows 
\[f(S)=M\cdot n_1 (S)\cdot(n_1 (S)-1)+n_2(S) \text{ for all }S\subseteq V,\]
where $M\gg 1$,  $n_1(S)=|S\cap \{v_1,v_2\}|$, $n_2(S)=|S\cap \{v_3,v_4,\dots,v_n\}|$. It is routine to check that $f\in\mathscr F_2$, 
where its 2-decomposition is $(2;(\{v_1,v_2\},f_1),(\{v_{i+1}\},f_i)_{i=2}^{n-1})$ with $f_1(S)=M|S|(|S|-1)$ for all $S\subseteq\{v_1,v_2\}$ and $f_i(S)=|S|$ for all $i\in \{2,\dots,n-1\}$ and $S\subseteq\{v_{i+1}\}$. 

For each $2\le k\le n-2$, the optimal solution of the $k$SPM is $S^*=\{v_1,v_2,\dots,v_k\}$ with $f(S^* )=2M+k-2$. However, since $f(v_1|S)=f(v_2|S)=0$ for any $S\subseteq V$ with $S\cap \{v_1,v_2\}=\emptyset$, the algorithm never adds $v_1$ or $v_2$ to $S_i$ during its execution. Consequently,  the solution $S_k$ obtained by this algorithm satisfies $S_k\cap \{v_1,v_2\}=\emptyset$, resulting in $f(S_k) = k$ and an arbitrarily large approximation ratio $\Theta(M/k)$.

\section{Normalization assumption} \label{sec:norm}
The following proposition shows that normalizing a nonnegative set function $f$ does not decrease the approximation ratio for the maximization problem $\max_{S\in\mathcal F}f(S)$, and preserves the function's property of being nonnegative, monotone, supermodular (and $r$-decomposable).
\begin{proposition}\label{prop:norm}
    Given nonnegative function $f:2^V\rightarrow\mathbb R_+$, let $g:2^V\leftarrow\mathbb R$ be the normalized function defined by $g(S)=f(S)-f(\emptyset)$ for all $S\subseteq V$. Then the following hold.
    \begin{enumerate}[(i)]
        \item   ${f(T)}/{f(S)}\le {g(T)}/{g(S)}$ for all $S,T\subseteq V$ with $f(S)\le f(T)$.
        \item  $g$ is nonnegative and monotone if $f$ is monotone.
     \item  $g$ is supermodular if $f$ is supermodular.
   \item  $g$ is $r$-decomposable if $f$ is monotone, supermodular and $r$-decomposable.
    \end{enumerate}
\end{proposition}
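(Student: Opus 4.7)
Items (i)--(iii) are essentially algebraic, so I would dispatch them first. For (i), reduce to the nontrivial case $f(S)>f(\emptyset)>0$: if $f(\emptyset)=0$ then $g=f$ and equality holds, and if $f(S)=f(\emptyset)$ then $g(S)=0$ and the right-hand side equals $+\infty$ by convention. With both denominators positive, cross-multiplying $f(T)/f(S)\le g(T)/g(S)$ and cancelling reduces the claim to $f(\emptyset)\,(f(T)-f(S))\ge 0$, which holds by $f(\emptyset)\ge 0$ and the hypothesis $f(S)\le f(T)$. For (ii), nonnegativity of $g(S)=f(S)-f(\emptyset)$ follows from monotonicity of $f$ applied to $\emptyset\subseteq S$, and $g$ inherits monotonicity from $f$ since the constant cancels in $g(T)-g(S)=f(T)-f(S)$. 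For (iii), the additive constant cancels likewise in the four-term supermodularity inequality, so $g$ is supermodular whenever $f$ is.

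For (iv), start from the given $r$-decomposition $(V_i,f_i)_{i=1}^m$ of $f$ and consider the natural candidate $g_i(T):=f_i(T)-f_i(\emptyset)$ for $T\subseteq V_i$. Each $g_i$ is supermodular (constants drop out of the four-term inequality), satisfies $g_i(\emptyset)=0$, and $\sum_{i=1}^m g_i(S\cap V_i)=f(S)-\sum_{i=1}^m f_i(\emptyset)=g(S)$, so the only thing missing for $(V_i,g_i)_{i=1}^m$ to be an $r$-decomposition of $g$ is nonnegativity of each $g_i$. Here I would use the fact---already observed in the paper's discussion of decomposition constituents---that a supermodular $g_i$ with $g_i(\emptyset)=0$ is monotone, and hence nonnegative, as soon as its singleton values $g_i(\{v\})$ are all nonnegative, since supermodularity yields $g_i(v|T)\ge g_i(v|\emptyset)=g_i(\{v\})$ for every $T\subseteq V_i\setminus\{v\}$.

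The main obstacle is exactly this singleton nonnegativity, because monotonicity of $f$ does not propagate to the individual pieces $f_i$ of an arbitrary decomposition; some $g_i(\{v\})=f_i(\{v\})-f_i(\emptyset)$ can be strictly negative, with the shortfall compensated by other pieces. My plan to repair this is a vertex-by-vertex balanced mass transfer. For every $v\in V$, the decomposition identity and the monotonicity of $f$ give $\sum_{i:v\in V_i}g_i(\{v\})=g(\{v\})=f(\{v\})-f(\emptyset)\ge 0$, so the positive singleton contributions across pieces outweigh the negative ones. I would then choose real coefficients $c_{i,v}$ satisfying $\sum_{i:v\in V_i}c_{i,v}=0$ and $g_i(\{v\})+c_{i,v}\ge 0$ for every $i$ with $v\in V_i$---routing mass from pieces where $g_i(\{v\})>0$ to those where $g_i(\{v\})<0$---and replace $g_i$ by $g_i(T)+\sum_{v\in V_i\cap T}c_{i,v}$. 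Adding this modular correction preserves supermodularity, keeps $g_i(\emptyset)=0$, preserves the decomposition identity $\sum_i g_i(S\cap V_i)=g(S)$ (since $\sum_i c_{i,v}=0$ for every $v$), and makes every singleton of every piece nonnegative, thereby yielding the desired $r$-decomposition of $g$.
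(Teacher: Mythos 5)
Your proposal is correct and matches the paper's argument in substance. Parts (i)--(iii) are handled identically, and for (iv) your "modular correction" $c_{i,v}$ is exactly the paper's $w_{ij}-f_i(j|\emptyset)$ (the paper simply makes a concrete choice, putting all the mass $g(\{j\})$ on the piece with index $\min I_j$), with the same feasibility argument $\sum_{i:v\in V_i} g_i(\{v\}) = g(\{v\}) \ge 0$; the only cosmetic difference is that you deduce nonnegativity of the corrected pieces from supermodularity-plus-nonnegative-singletons-implies-monotonicity, while the paper uses the equivalent telescoping bound $f_i(S)-f_i(\emptyset)\ge \sum_{j\in S} f_i(j|\emptyset)$ directly.
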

\begin{proof}
   It suffices to consider the case of $f(\emptyset)> 0$.
  The inequality in (i) follows from $g(T)f(S)-f(T)g(S)=(f(T)-f(\emptyset))f(S)-f(T)(f(S)-f(\emptyset))=f(\emptyset)\cdot (f(T)-f(S))\ge 0$. Statements (ii) and (iii) are trivial.
    
    To prove (iv), we assume that $V=[n]$ and $f$ admits $r$-decomposition $(V_i,f_i)_{i=1}^m$. Let $I_j=\{i\in[m]:j\in V_i\}$. Consider any $j\in V$. Since $f$ is monotone,  we have $f(j|\emptyset)=\sum_{i\in I_j} f_i(j|\emptyset)\ge 0$. Therefore, for each $i\in I_j$, there exists $w_{ij}\ge 0$ such that \[f(j|\emptyset)=\sum_{i\in I_j} w_{ij}.\]
    To be specific, when $i\in I_j$, 
    we can set $w_{ij}$ to $f(j|\emptyset)$ if  $i=\min I_j$, and  to $0$ otherwise. Let $g_i:2^{V_i} \rightarrow \mathbb{R}_+$ be the function defined as follows: 
    \[g_i(S)=f_i(S)-f_i(\emptyset)+\sum_{j\in S} (w_{ij}-f_i(j|\emptyset))\text{ for each }S\subseteq V_i.\]
    We claim that $(V_i,g_i)_{i=1}^m$ is an $r$-decomposition of $g$. First, since $f_i$ is supermodular, 
    \[g_i(S)\ge \sum_{j\in S} f_i(j|\emptyset)+\sum_{j\in S} (w_{ij}-f_i(j|\emptyset))=\sum_{j\in S} w_{ij}\ge 0\text{ for each }S\subseteq V_i,\]
    saying that $g_i$ is nonnegative. Second, it is easy to see that 
    \[g_i(A\cup B)+g_i(A\cap B)-g_i(A)-g_i(B)=f_i(A\cup B)+f_i(A\cap B)-f_i(A)-f_i(B)\ge 0\text{ for all }A,B\subseteq V_i,\]
   showing that $g_i$ is supermodular. Finally, $f$'s decomposability implies that for each $S\subseteq V_i$ 
    \begin{align*}
        \sum_{i=1}^m g_i(S \cap V_i)&=\sum_{i=1}^m \left(f_i(S \cap V_i)-f_i(\emptyset)+\sum_{j\in S \cap V_i} (w_{ij}-f_i(j|\emptyset))\right)\\
        &=f(S)-f(\emptyset)+\sum_{j\in S}\sum_{i\in I_j} (w_{ij}-f_i(j|\emptyset))\\
        &=g(S)
    \end{align*}
  which justifies (iv).
\end{proof}

\section{Tight connectivity requirements}\label{sec:scon}
Given function $f\in\mathscr F_r$ with its $r$-decomposition $(V_i,f_i)_{i=1}^m$, a subset $S$ of $V$ is tightly connected if and only if the graph with vertex set $S$ and edges set $\cup_{i\in[m]:V_i\subseteq S} \{uv:\{u,v\}\subseteq V_i,u\neq v\}$ is connected. 

The {tightly connected \tkm-subset supermodular maximization} problem is to find a tightly connected \tkm-subset $S$ of $V$ with maximum $f(S)$. If $k< r^2+r$, then we can use a brute-force search to find the optimal tightly connected \tkm-subset. For $k\ge r^2+r$, the following theorem implies that the supermodular maximization on $\mathscr F_r$ subject to cardinality and tight connectivity constraints admits the same approximation ratio (up to a constant) as one derives for the \tkm SPM problem.

\begin{theorem}
Suppose that $k\ge r^2+r$.  Let $S_k^*$ be the optimal solution of the \tkm SPM problem on $f\in\mathscr F_r$. If there is a polynomial algorithm that finds a connected \tkm-subset $S_k$ with $f(S_k)\ge \alpha \cdot f(S_k^*)$, then there is a constant $c$ and a polynomial algorithm that finds a tightly connected \tkm-subset $S_k'$ satisfying $f(S_k')\ge c\alpha \cdot f(S_k^*)$.
\end{theorem}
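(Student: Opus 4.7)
The plan is to prove the theorem by a black-box reduction that invokes the hypothesized $\alpha$-approximation at a slightly reduced cardinality $k':=\lfloor k/(r-1)\rfloor$ and then enlarges its output into a tightly connected $\underline{k}$-subset. Let $S^*_{k'}$ denote an optimum for the $\underline{k'}$SPM on $f$. The hypothesis, applied at cardinality $k'$, produces a connected $\underline{k'}$-subset $S'$ with $f(S')\ge\alpha\cdot f(S^*_{k'})$. I then construct $S'_k\supseteq S'$ that is tightly connected with $|S'_k|\le k$, and show that $f(S^*_{k'})$ lies within a constant factor of $f(S^*_k)$. Since $k\ge r^2+r=r(r+1)$ and $r\ge 2$ is constant, one checks $k'\ge r+1>r$ and $k'/k\ge 1/r$: these are the two arithmetic inequalities that dictate the choice of $k'$.

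For the augmentation, take any spanning tree $T$ of $G[S']$; for each edge $uv\in T$, fix some $i(uv)\in[m]$ with $\{u,v\}\subseteq V_{i(uv)}$ (which exists by the definition of $G$), and set $S'_k:=S'\cup\bigcup_{uv\in T}V_{i(uv)}$. Because every selected $V_{i(uv)}$ now lies entirely inside $S'_k$, each edge of $T$ lifts to a tight-connectivity edge of $S'_k$, so $T$ spans $S'$ inside the tight-connectivity graph of $S'_k$; moreover every freshly added vertex sits in some $V_{i(uv)}\subseteq S'_k$ and is thereby tightly adjacent to the endpoints of $uv$ in $S'$. Hence $S'_k$ is tightly connected. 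Each $V_{i(uv)}$ contributes at most $r-2$ new vertices and $|T|=|S'|-1$, so
\begin{equation*}
|S'_k|\le|S'|+(r-2)(|S'|-1)\le(r-1)|S'|\le(r-1)k'\le k,
\end{equation*}
and by monotonicity of $f$ we have $f(S'_k)\ge f(S')$.

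To compare $f(S^*_{k'})$ with $f(S^*_k)$, apply the greedy peeling of Algorithm~\ref{alg:pmkss} to the restriction $f|_{2^{S^*_k}}$, which inherits nonnegativity, monotonicity, and $r$-decomposability, peeling from size $k$ down to size $k'$. Since $k'>r$, Lemma~\ref{lma:pdk} yields a $k'$-subset $U\subseteq S^*_k$ with
\begin{equation*}
f(U)\ge\frac{r!}{r^r}\cdot\left(\frac{k'}{k}\right)^{\!r}\cdot f(S^*_k)\ge\frac{r!}{r^{2r}}\cdot f(S^*_k),
\end{equation*}
where the second inequality uses $k'/k\ge 1/r$. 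Optimality of $S^*_{k'}$ gives $f(S^*_{k'})\ge f(U)$, and chaining the bounds produces $f(S'_k)\ge f(S')\ge\alpha\,f(S^*_{k'})\ge c\alpha\cdot f(S^*_k)$ with $c:=r!/r^{2r}$, a positive constant since $r$ is constant.

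The main obstacle I expect is twofold: first, calibrating $k'=\lfloor k/(r-1)\rfloor$ so that both $|S'_k|\le k$ (which forces the factor $r-1$) and $k'/k\ge 1/r$ (which then forces the threshold $k\ge r(r+1)$ after taking the floor) hold simultaneously; second, verifying that the spanning-tree augmentation genuinely yields tight connectivity, not merely connectivity in $G$—the crucial point is that choosing $V_{i(uv)}$ once per tree edge and including it \emph{in full} is exactly what turns each tree edge into a tight edge, and any by-product vertex is tightly attached through the same $V_{i(uv)}$. Once these two points are in place, the remainder of the argument is routine chaining of monotonicity, the $\alpha$-approximation invoked at cardinality $k'$, and Lemma~\ref{lma:pdk}.
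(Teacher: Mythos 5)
Your proof is correct and arrives at the same spanning-tree augmentation idea for achieving tight connectivity, but it reaches the reduced cardinality $k'$ by a different route than the paper. You invoke the hypothesized $\alpha$-approximation directly at $k' = \lfloor k/(r-1)\rfloor$, and compare $f(S^*_{k'})$ with $f(S^*_k)$ by (conceptually) greedy-peeling the optimal set $S^*_k$ via Lemma~\ref{lma:pdk}. The paper instead invokes the hypothesized algorithm only at the given $k$, obtaining a connected $\underline{k}$-subset $S_k$, and then runs its own Corollary~\ref{cor:overall} on the restricted instance $f|_{2^{S_k}}$ to shrink $S_k$ to a connected $\underline{k'}$-subset with $k' = \lfloor k/r\rfloor$; the peeling bound of Lemma~\ref{lma:pdk} is then applied inside $S_k$. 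Your calibration $k' = \lfloor k/(r-1)\rfloor$ is sharper (it exploits that each $V_{i(uv)}$ contributes at most $r-2$ new vertices, giving $|S'_k|\le(r-1)k'\le k$), whereas the paper uses the cruder bound $|S'_k|\le r|S_{k'}|\le rk'\le k$. The one subtlety in your version is that you tacitly assume the hypothesized algorithm is available at cardinality $k'$ with the \emph{same} factor $\alpha$; the theorem's statement fixes a single $k$, and in principle the factor could depend on $k$. The paper's proof sidesteps this by querying the hypothesized algorithm only at $k$ and doing all downstream shrinking with its own (explicitly $k$-uniform) subroutine. In the theorem's intended application (Corollary~\ref{cor:tight-strong}, where $\alpha = O(n^{(r-1)/2})$ is $k$-independent), your reading is harmless, but you should flag this assumption to make the argument airtight as a self-contained black-box reduction.
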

\begin{proof}
     Let $k'=\lfloor k/r\rfloor$ and $S^*_{k'}$ be any $\underline{k'}$-subset in $\mathrm{argmax}\{f(S):|S|\le k', S\subseteq S_k\}$. By Corollary \ref{cor:overall}, 
     we can find a connected $\underline{k'}$-subset $S_{k'}\subseteq S_k$ in polynomial time such that $f(S_{k'})\ge c_1\cdot f(S_{k'}^*)$, where $c_1$ is a constant. By Lemma \ref{lma:pdk}, there is a constant $c_2$ such that $f(S_{k'}^*)\ge c_2 \cdot f(S_k)$. 
    
    Recall that $G=(V,E)$ is a graph with edge set $E=\cup_{i=1}^m \{uv:\{u,v\}\subseteq V_i,u\neq v\}$. Since $S_{k'}$ is connected, we have a spanning tree $T$ of the graph $G[S_{k'}]$. By definition, we can construct a function $g:E(T)\rightarrow [m]$ satisfying $e\subseteq V_{g(e)}$ for each $e\in E(T)$. Define subset $S'_k:=S_{k'}\cup (\bigcup_{e\in E(T)} V_{g(e)})$. Then $|S'_k|\le |S_{k'}|+(|S_{k'}|-1)(r-2)\le r|S_{k'}|\le k$ and $S'_k$ is tightly connected. Besides,  the monotonicity of $f$ implies
    \[f(S'_k)\ge f(S_{k'})\ge c_1\cdot f(S_{k'}^*)\ge c_1c_2 \cdot f(S_k)\ge c_1c_2\alpha \cdot f(S_k^*),\]
verifying the theorem.
\end{proof}

\end{document}